\newtheorem{thm}{Theorem}[section]
\newtheorem{cor}[thm]{Corollary}
\newtheorem{lem}[thm]{Lemma}
\theoremstyle{definition}
\newtheorem{rem}[thm]{Remark}
\numberwithin{equation}{section}
\newcommand{\bC}{{\mathbb C}}
\newcommand{\bK}{{\mathbb K}}
\newcommand{\bQ}{{\mathbb Q}}
\newcommand{\bZ}{{\mathbb Z}}
\newcommand{\cA}{{\mathcal A}}
\newcommand{\cC}{{\mathcal C}}
\newcommand{\cD}{{\mathcal D}}
\newcommand{\cN}{{\mathcal N}}
\newcommand{\cO}{{\mathcal O}}
\newcommand{\core}{{\rm core}}
\begin{document}

%%%%% To ease editing, for IMPAN journals add:

\baselineskip=17pt

%%%%%%%%%%%

%% In the running head, replace first names by initials 
%% and give an abbreviation of the title.

\title{Thue's Fundamentaltheorem, I: The General Case}

\author{Paul M. Voutier}
\address{London, UK}
\email{Paul.Voutier@gmail.com}

\date{}

\begin{abstract}
In this paper, we examine Thue's Fundamentaltheorem, showing that it includes,
and often strengthens, known effective irrationality measures obtained via the
so-called hypergeometric method as well as showing that it can be applied to
previously unconsidered families of algebraic numbers. Furthermore, we extend
the method to also cover approximation by algebraic numbers in imaginary
quadratic number fields.
\end{abstract}

%    \subjclass is required.
\subjclass[2010]{Primary 11J82, 11J68}
\keywords{Diophantine Approximation, Effective Irrationality Measures, Hypergeometric Functions}

\maketitle

\section{Introduction}

\subsection{Background}

In the 1840's, Liouville \cite{Liou} established the existence of 
transcendental numbers by actually constructing one. His construction 
was based on his discovery that for any algebraic number $\alpha$ 
of degree $n \geq 2$, there exists a real number $c(\alpha) > 0$ such that 
\begin{displaymath}
\left| \alpha - \frac{p}{q} \right| 
> \frac{c(\alpha)}{|q|^{n}}, 
\end{displaymath}
for all integers $p$ and $q$ with $q \neq 0$. It was this 
work which first demonstrated the now well-established link 
between transcendence and diophantine problems. 

In 1909, Thue \cite{Thue3} improved upon Liouville's diophantine 
result by introducing a method which eventually led, in 1955, 
to Roth's proof \cite{Roth} that for any irrational algebraic 
number $\alpha$ and any $\epsilon > 0$, there exists 
$c(\alpha,\epsilon) > 0$ such that 
\begin{displaymath}
\left| \alpha - \frac{p}{q} \right| 
> \frac{c(\alpha,\epsilon)}{|q|^{2+\epsilon}}, 
\end{displaymath}
for all integers $p$ and $q$ with $q \neq 0$. 

We call the exponents on $|q|$ in these inequalities 
{\it irrationality measures for $\alpha$} and Roth's irrationality
measures are essentially best possible. 

But the reader should not be misled by this phrase `best possible', for here,
as is often the case, there is more to be done. From Liouville's proof it is
possible to explicitly determine the constant, $c(\alpha)$, but this is not
true for the results of Thue or Roth. This is important as an irrationality
measure even slightly less than $n$ along with an explicit constant (such a
result is called {\it effective}) can yield bounds on the size of solutions 
of many classes of diophantine equations.  

At present, there are three methods available for obtaining 
such effective irrationality measures. 

The first is due to Alan Baker, who, in 1964, published two 
papers \cite{Baker1,Baker2} in which he obtained such effective 
irrationality measures for certain algebraic numbers of the 
form $z^{m/n}$. As an example, he showed that for all integers 
$p$ and $q$, with $q \neq 0$, 
\begin{displaymath}
\left| 2^{1/3} - \frac{p}{q} \right| > \frac{10^{-6}}{|q|^{2.955}}. 
\end{displaymath}

Such results, via this technique, have since been improved, notably
through Chudnovsky's analysis of denominators of the coefficients
of certain hypergeometric functions \cite{Chud}. The best result currently
known, from \cite{Vout1}, states that for any integers $p$ and $q$,
with $q \neq 0$, 
\begin{displaymath}
\left| 2^{1/3} - \frac{p}{q} \right| > \frac{0.25}{|q|^{2.4325}}. 
\end{displaymath}

Baker also pioneered the second method. Later in the 1960's, he \cite{Baker3}
established a remarkable result: lower bounds for linear forms in logarithms.
Among the many applications of this result, in a refined form, are effective 
irrationality measures which are better than Liouville's for 
any algebraic number of degree at least three. The reader is 
invited to consult \cite{BS} where effective irrationality 
measures for numbers of the form $\sqrt[3]{n}$ with $n \in \bZ$ 
are established.

Finally, in the early 1980's, Bombieri \cite{Bom} combined 
elements of the non-effective method of Thue and Siegel with a 
result of Dyson, which was itself discovered for such diophantine 
approximation purposes, to create a method which under suitable 
conditions gives rise to effective irrationality measures much 
better than Liouville's. Along with van der Poorten 
and Vaaler, he \cite{BPV} later refined this method in the case 
of numbers which are cubic irrationalities over number fields. 

\subsection{The Present Work}

In this article, we shall consider ideas related to the first method, the basis
of which lies in the work of Thue, his Fundamentaltheorem \cite{Thue4}.
This work was a continuation of his earlier results \cite{Thue1, Thue2} in which
he explicitly determined polynomials $P_{r}(x)$ and $Q_{r}(x)$ such that
$$
Q_{r}(x)x^{1/n}-P_{r}(x)=(x-1)^{2r+1}\overline{S}_{r}(x),
$$
where $\overline{S}_{r}(x)$ is regular at $x=1$.

Siegel \cite{S1} recognised these $P_{r}(x)$ and $Q_{r}(x)$ as hypergeometric
polynomials. He \cite{S2} also recognised that the polynomials, $F(x)$,
satisfying the differential equation in Thue's Fundamentaltheorem are those
given in Lemma~\ref{lem:diff-eqn} for $m=2$.

In earlier papers \cite{Chen1, CV, LPV}, Thue's Fundamentaltheorem was used
to completely solve several families of Thue equations and inequalities. In
this paper, we investigate the precise conditions under which Thue's
Fundamentaltheorem yields effective irrationality measures for algebraic numbers.

As a result, we show that Thue's Fundamentaltheorem includes all the
effective irrationality measures for numbers of the form $z^{1/n}$,
which can be obtained Baker's first method above and its refinements.

But, in addition to that, we also obtain effective irrationality measures
for a new family of algebraic numbers. These results include all the previous
results (\cite{Chen1, CV, LPV, TVW}) derived from Thue's Fundamentaltheorem.

Furthermore, like Yuan \cite{PZ}, we are able to extend our results to
diophantine approximation over imaginary quadratic fields (the only other
number fields besides $\bQ$, that possess the property of ``discreteness''
of its integers).

However, there are some related tools that are not dealt with here. In particular,
it is possible to use Pad\'{e} approximations to several functions simultaneously
to obtain effective irrationality measures (see \cite{Chud}). A striking example
of this technique is Bennett's paper \cite{Ben}, in which it is used to obtain
effective irrationality measures for numbers of the form $(b/a)^{1/n}$, where
$a$ and $b$ are ``small'' rational integers. These cannot be treated by the
usual ``non-simultaneous'' technique.

See also Wakabayashi's papers \cite{Waka1, Waka2} where simultaneous Pad\'{e}
approximations to the functions $\sqrt{1-a_{1}x}$ and $\sqrt{1-a_{2}x}$ are used
to obtain effective irrationality measures for the real roots of some families
of polynomials of the form $x^{4}-a^{2}x^{2}+b$. These roots are not covered by
our results here.

Finally, in our notation below, we require that $W(x)$ is near $1$. In a
forthcoming paper \cite{Vout2}, we also obtain results when $W(x)$ is near
$-1$ or the quadratic roots of unity, along with more general expressions in
place of $\cA(x)$.

\subsection{Structure of this paper}

We structure this paper as follows. After some notation in the next subsection,
Section~2 contains the statements of our results, our general theorem followed
by two corollaries. In Section~3, we present Thue's original statement of his
Fundamentaltheorem followed by our own simplified version. In Section~4, we
establish the form of the polynomials to which Thue's Fundamentaltheorem applies.
Section~5 contains information on the roots of these polynomials. Section~6
contains two diophantine lemmas. This is followed in Section~7 by some analytic
results on the size of the numerators and denominators of the hypergeometric
polynomials as well as bounds for the values of the polynomials. Section~8
contains the proof of Theorem~\ref{thm:general-hypg}, Section~9 contains the
proof of Theorem~\ref{thm:general-hypg-unitdisk}, Finally, we prove our two
corollaries in Sections~10 and 11.

\subsection{Notation}

In order to state our results, we start with some notation. 

%Throughout this paper, we shall use the following conventions.
%
%-- $m$, $n$, $p$, $q$, $r$ denote integers
%
%-- $w$, $x$, $y$, $z$ denote variables
%
%-- functions will be given in capital letters
%
For positive integers $m$ and $n$ with $(m,n)=1$ and a non-negative integer $r$,
we put
\begin{displaymath}
X_{m,n,r}(x) = {} _{2}F_{1}(-r,-r-m/n;1-m/n;x),  
\end{displaymath}
where $_{2}F_{1}$ denotes the classical hypergeometric function. 

We use $X_{m,n,r}^{*}$ to denote the homogeneous polynomials derived from these
polynomials, so that  
\begin{displaymath}
X_{m,n,r}^{*}(x,y) = y^{r} X_{m,n,r}(x/y).  
\end{displaymath}

For Thue's Fundamentaltheorem itself, we will only use $X_{1,n,r}$, so for convenience
we will use $X_{n,r}$ rather than $X_{1,n,r}$ in what follows.

We let $D_{m,n,r}$ denote the smallest positive integer such that
$D_{m,n,r} X_{m,n,r}(x)$ has rational integer coefficients (and again $D_{n,r}$
in place of $D_{1,n,r}$).

For a positive integer $d$, we define $N_{d,n,r}$ to be the greatest common
divisor of the numerators of the coefficients of $X_{m,n,r}(1-dx)$.

We will use $v_{p}(x)$ to denote the largest power of a prime $p$ which divides
into the rational number $x$. With this notation, for positive integers $d$
and $n$, we put
\begin{displaymath}
\cN_{d,n} =\prod_{p|n} p^{\min(v_{p}(d), v_{p}(n)+1/(p-1))}.
\end{displaymath}

For any complex number $w$, we can write $w=se^{i \varphi}$, where $s \geq 0$ and
$-\pi < \varphi \leq \pi$ (with $\varphi=0$, if $s=0$). With such a representation,
unless otherwise stated, $w^{1/n}$ will signify $s^{1/n}e^{i \varphi/n}$ for a
positive integer $n$, where $s^{1/n}$ is the unique non-negative $n$-th root of $s$.

Lastly, following the function name in PARI, we define $\core(n)$ to be the
unique squarefree divisor, $n_{1}$, of $n$ such that $n/n_{1}$ is a perfect square.

\section{Results}

\begin{thm}
\label{thm:general-hypg}
Let $\bK$ be either $\bQ$ or an imaginary quadratic field and let $\beta_{1}$
be an algebraic integer with $[\bK(\beta_{1}):\bK] \leq 2$.

If $\bK=\bQ$ or $\bK(\beta_{1})=\bK$, then let $\tau=1$, else let $\tau$ be an
algebraic integer in $\bK$ such that $\bK(\beta_{1})=\bK(\sqrt{\tau})$.

If $\beta_{1} \in \bK$, then let $\beta_{2}$, $\gamma_{1}, \gamma_{2} \in \bK$
with the $\gamma_{i}$'s non-zero, $\beta_{2} \neq \beta_{1}$ and $\beta_{2}$
an algebraic integer.

If $[\bK(\beta_{1}):\bK] = 2$, then let $\beta_{2}$ be the algebraic conjugate of
$\beta_{1}$ over $\bK$, $\gamma_{1} \in \bK(\beta_{1})$ and $\gamma_{2}$ be the
algebraic conjugate of $\gamma_{1}$ over $\bK$ $($so $\gamma_{1}=\gamma_{2}$
if they are elements of $\bK)$.

For an algebraic integer $x \in \bK$ and a rational integer $n \geq 3$, put
$$
U(x) = -\gamma_{2} \left( x-\beta_{2} \right)^{n},
\hspace{5.0mm}
Z(x) = \gamma_{1} \left( x-\beta_{1} \right)^{n},
\hspace{5.0mm}
W(x) = \frac{Z(x)}{U(x)}
$$
and
$$
\cA(x) =
\frac{\beta_{1}\left( x - \beta_{2} \right)W(x)^{1/n}-\beta_{2}\left( x - \beta_{1} \right)}
{\left( x - \beta_{2} \right)W(x)^{1/n}-\left( x - \beta_{1} \right)}.
$$

Let $g$ be an algebraic number such that $U(x)/g$ and $Z(x)/g$ are algebraic
integers $($not necessarily in $\bK(\beta_{1}))$. For each non-negative integer
$r$, let $h_{r}$ be a non-zero algebraic integer with $h_{r}/g^{r} \in \bK$
and $|h_{r}| \leq h$ for some fixed positive real number $h$.

Let $d$ be the largest positive rational integer such that $(U(x)-Z(x))/(dg)$ is
an algebraic integer and let $\cC_{n}$ and $\cD_{n}$ be positive real numbers
such that
\begin{equation}
\label{eq:num-denom-bnd-a}
\max \left( 1, \frac{\Gamma(1-1/n) \, r!}{\Gamma(r+1-1/n)},
\frac{n\Gamma(r+1+1/n)}{\Gamma(1/n)r!} \right)
\frac{D_{n,r}}{N_{d,n,r}} < \cC_{n} \left( \frac{\cD_{n}}{\cN_{d,n}} \right)^{r}
\end{equation}
holds for all non-negative integers $r$.

Put
\begin{eqnarray*}
E      & = & \frac{|g|\cN_{d,n}}{\cD_{n}} \left\{ \min \left( \left| \sqrt{U(x)}-\sqrt{Z(x)} \right| , \left| \sqrt{U(x)}+\sqrt{Z(x)} \right| \right) \right\}^{-2}, \\
Q      & = & \frac{\cD_{n}}{|g|\cN_{d,n}} \left\{ \max \left( \left| \sqrt{U(x)}-\sqrt{Z(x)} \right| , \left| \sqrt{U(x)}+\sqrt{Z(x)} \right| \right) \right\}^{2}, \\
\kappa & = & \frac{\log Q}{\log E} \mbox{ and } \\        
c      & = & 4h |\sqrt{\tau}| \left( |x-\beta_{1}| + |x-\beta_{2}| \right) \cC_{n} Q \\
       &   & \times \max \left( 1, 5h |\sqrt{\tau}| \left| 1- W(x)^{1/n} \right| \left| x-\beta_{2} \right|
                          \left| \cA(x)-\beta_{1} \right| \cC_{n}E \right)^{\kappa}.
\end{eqnarray*}

If $E > 1$ and either $0 < W(x) < 1$ or $|W(x)|=1$ with $W(x) \neq -1$, then 
\begin{equation}
\label{eq:gen-result}
\left| \cA(x) - p/q \right| > \frac{1}{c |q|^{\kappa+1}} 
\end{equation}
for all algebraic integers $p$ and $q$ in $\bK$ with $q \neq 0$. 
\end{thm}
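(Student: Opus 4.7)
The plan is to manufacture, for each non-negative integer $r$, a pair $(p_r,q_r)$ of algebraic integers in $\bK$ approximating $\cA(x)$, and then to invoke the Thue--Siegel gap lemma announced for Section~6, which converts twin bounds of the form $|q_r|\lesssim E^{r}$ and $|q_r\cA(x)-p_r|\lesssim Q^{-r}$ into a lower bound on $|\cA(x)-p/q|$ with exponent $\kappa+1=1+\log Q/\log E$.

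For the construction, Thue's Pad\'{e}-type identity for his polynomials $P_r(w),Q_r(w)$ (developed in Sections~3--5) takes the form $Q_r(w)w^{1/n}-P_r(w)=(w-1)^{2r+1}\overline{S}_r(w)$. Homogenising through $X_{n,r}^{*}$ and specialising to $w=W(x)=Z(x)/U(x)$, one obtains polynomials $A_r(x),B_r(x)$ built from $(U(x),Z(x))$ satisfying
$$A_r(x)-B_r(x)\,W(x)^{1/n}=O\bigl((1-W(x))^{2r+1}\bigr).$$
Substituting the approximation $W(x)^{1/n}\approx A_r/B_r$ into the defining formula for $\cA(x)$ yields the rational approximation
$$\frac{\widetilde p_r}{\widetilde q_r}=\frac{\beta_1(x-\beta_2)A_r-\beta_2(x-\beta_1)B_r}{(x-\beta_2)A_r-(x-\beta_1)B_r},$$
and I take $(p_r,q_r)$ to be a suitable rescaling of $(\widetilde p_r,\widetilde q_r)$ by $h_r\,D_{n,r}/N_{d,n,r}$ together with the appropriate power of $g$.

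Two things must then be checked. For arithmetic integrality, the hypotheses $U(x)/g,Z(x)/g\in\cO$ absorb the $g^r$-scaling against $h_r/g^r\in\bK$; $D_{n,r}$ clears the coefficient denominators of $X_{n,r}$; the maximality of $d$ couples with $N_{d,n,r}$ so that the evaluation of $X_{n,r}(1-dx)$ is integral modulo a factor at primes dividing $n$ which is exactly $\cN_{d,n}$; and when $\beta_1\notin\bK$, one symmetrises over $\beta_1\leftrightarrow\beta_2$, $\gamma_1\leftrightarrow\gamma_2$ to cancel $\sqrt{\tau}$ from the coefficients. For the analytic side, (\ref{eq:num-denom-bnd-a}) bounds the scaling denominators while the hypergeometric evaluations at $0$ and $1$ (whose $\Gamma$-ratios appear in that very display) control the function values, yielding
$$|q_r|\le C_1E^{r},\qquad |q_r\cA(x)-p_r|\le C_2\,Q^{-r}.$$
The pair $|\sqrt{U(x)}\pm\sqrt{Z(x)}|$ enters because $|U-Z|=|\sqrt{U}-\sqrt{Z}|\cdot|\sqrt{U}+\sqrt{Z}|$, and a finer telescoping of $1-W^{1/n}$ against $1+W^{1/n}+\cdots+W^{(n-1)/n}=(1-W)/(1-W^{1/n})$ distributes this product into a smallest and a largest factor, yielding $E$ and $Q$ after the appropriate squaring; the exclusion $W\neq-1$ is what keeps the factor $1-W^{1/n}$ away from $0$ in the unit-circle case.

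For the gap step, given $p/q$ with $q\neq0$ an algebraic integer of $\bK$, I pick the smallest $r$ with $C_1E^{r}\ge c_0|q|$ (possible precisely because $E>1$) and invoke the Section~6 lemma on two consecutive approximations: Pad\'{e} non-vanishing guarantees the $2\times 2$ determinant $p_rq_{r+1}-p_{r+1}q_r$ is non-zero, so at least one of $q_rp-p_rq$, $q_{r+1}p-p_{r+1}q$ is a non-zero element of $\cO_{\bK}$ of norm at least $1$, which produces $|\cA(x)-p/q|\ge 1/(c|q|^{\kappa+1})$ with the $c$ and the $\max(1,\ldots)^{\kappa}$ factor assembled from $C_1,C_2,Q$ and the mismatch between $C_1E^r$ and $c_0|q|$. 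The principal obstacle is the integrality step: tracking $D_{n,r}$, $N_{d,n,r}$, $\cN_{d,n}$, the $g$-denominators and the base-change factor $\sqrt{\tau}$ simultaneously, and matching them against (\ref{eq:num-denom-bnd-a}) to saturate the bound $|q_r|\le C_1E^{r}$; once this is settled, the analytic estimates and the gap argument proceed in essentially standard fashion.
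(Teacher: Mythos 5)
Your proposal follows essentially the same route as the paper's Section~8 proof: Thue's hypergeometric Pad\'{e} construction specialized at $W(x)$, integrality secured via $g$, $h_{r}$, $D_{n,r}/N_{d,n,r}$ and conjugation over $\beta_{1}\leftrightarrow\beta_{2}$ to absorb $\sqrt{\tau}$, analytic bounds coming from the extremal analysis that produces $|\sqrt{U(x)}\pm\sqrt{Z(x)}|$, non-vanishing of consecutive $2\times 2$ determinants, and the standard gap lemma. The only slip is notational: with the theorem's definitions one gets $|q_{r}|\lesssim Q^{r}$ and $|q_{r}\cA(x)-p_{r}|\lesssim E^{-r}$ (you have interchanged the roles of $E$ and $Q$ in your displayed twin bounds), but your choice of $r$ via $E$ and the final exponent $\kappa+1=1+\log Q/\log E$ are consistent with the correct assignment, so this is a labeling inconsistency rather than a gap.
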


\begin{rem}
As we will see in the proof of Corollary~\ref{cor:cor-2}, the inclusion of the
$h_{r}$'s here can permit the use of a larger value of $g$ and hence improved
reduced values of $\kappa$.
\end{rem}

\begin{rem}
The inequality (\ref{eq:num-denom-bnd-a}) does not impose any constraint for,
as we will demonstrate in Lemma~\ref{lem:denom}, such an inequality always holds.
\end{rem}

We can also obtain a similar, though slightly weaker, result for other values
of $W(x)$ near $1$. This allows us to extend and refine the results of
Heuberger \cite{Heu}.

\begin{thm}
\label{thm:general-hypg-unitdisk}
Let $\bK$ be an imaginary quadratic field and $\beta_{1}, \beta_{2}, \gamma_{1},
\gamma_{2}$, $\tau$, $x$, $n$, $U(x)$, $Z(x)$, $W(x), \cA(x), d, g, h_{r}, h$,
$\cC_{n}, \cD_{n}, \cN_{d,n}$ be as in Theorem~$\ref{thm:general-hypg}$.

Put
\begin{eqnarray*}
E      & = & \frac{|g|\cN_{d,n}}{\cD_{n}}
             \frac{4(|U(x)|-|Z(x)-U(x)|)}{|Z(x)-U(x)|^{2}}, \\
Q      & = & \frac{\cD_{n}}{|g|\cN_{d,n}}
             2\left( \left| U(x) \right| + \left| Z(x) \right| \right), \\
\kappa & = & \frac{\log Q}{\log E} \mbox{ and } \\        
c      & = & 4h |\sqrt{\tau}| \left( |x-\beta_{1}| + |x-\beta_{2}| \right) \cC_{n} Q \\
       &   & \times \max \left( 1, 2h |\sqrt{\tau}| \left| 1-W(x)^{1/n} \right| \left| x-\beta_{2} \right| |\cA(x)-\beta_{1}| \cC_{n}E \right)^{\kappa}.
\end{eqnarray*}

If $E > 1$, $\max \left( |1-W(x)|, |1-1/W(x)| \right)<1$, then 
\begin{equation}
\label{eq:gen-unitdisk-result}
\left| \cA(x) - p/q \right| > \frac{1}{c |q|^{\kappa+1}} 
\end{equation}
for all algebraic integers $p$ and $q$ in $\bK$ with $q \neq 0$. 
\end{thm}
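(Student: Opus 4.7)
The plan is to follow the proof of Theorem~\ref{thm:general-hypg} step by step, substituting analytic estimates appropriate to the disk $\max(|1-W(x)|,|1-1/W(x)|)<1$ for those valid on $(0,1)$ or on $|W|=1$. As in that argument, one applies Thue's Fundamentaltheorem to the hypergeometric polynomials $X_{n,r}$ to produce, for every non-negative integer $r$, a pair of $\bK$-rational numbers $P_{r},Q_{r}$ such that $Q_{r}-P_{r}\cA(x)$ and its $\sqrt{\tau}$-conjugate are both small and linearly independent. Expressing $P_{r},Q_{r}$ through $X_{n,r}^{*}(U(x),Z(x))$ and its companion polynomial, and clearing denominators using~\eqref{eq:num-denom-bnd-a}, the integrality of $U(x)/g$ and $Z(x)/g$, and the cancellation recorded by $N_{d,n,r}$, one bounds the scaled common denominator of $P_{r}$ and $Q_{r}$ by $\cC_{n}(\cD_{n}/(|g|\cN_{d,n}))^{r}$.

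The new analytic ingredients are two $r$-th power bounds valid throughout the disk. Since the factorisation through $\sqrt{U}\pm\sqrt{Z}$ used in Theorem~\ref{thm:general-hypg} is not convenient here, I would instead derive them from the Euler integral representation of ${}_{2}F_{1}$, with contour chosen to exploit $|1-W(x)|<1$ directly. The targets are
\[
|X_{n,r}^{*}(U(x),Z(x))|\leq\cC_{n}\bigl(2(|U(x)|+|Z(x)|)\bigr)^{r}
\quad\text{and}\quad
|R_{r}|\leq\cC_{n}\left(\frac{|Z(x)-U(x)|^{2}}{4(|U(x)|-|Z(x)-U(x)|)}\right)^{r},
\]
where $R_{r}$ is the remainder paired with the companion polynomial from Thue's Fundamentaltheorem. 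These are precisely the rates encoded in $Q$ and in $1/E$; after the denominator clearing above they give $|P_{r}|,|Q_{r}|\ll Q^{r}$ and $|P_{r}\cA(x)-Q_{r}|\ll 1/E^{r}$, with the M\"obius derivative factor $|1-W(x)^{1/n}||x-\beta_{2}||\cA(x)-\beta_{1}|$ in $c$ arising from passing between a $W(x)^{1/n}$-approximation and a $\cA(x)$-approximation.

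With these in hand, the rest of the argument transfers almost verbatim from Section~8. For any candidate $p/q\in\bK$, pick the unique $r$ with $E^{r}$ comparable to $|q|$; then by linear independence $qP_{r}-pQ_{r}$ is a non-zero algebraic integer whose absolute value is bounded below by a constant depending only on $\tau$, while
\[
|qP_{r}-pQ_{r}|\leq|q|\cdot|P_{r}\cA(x)-Q_{r}|+|Q_{r}|\cdot|q\cA(x)-p|
\]
combined with $\kappa=\log Q/\log E$ promotes the exponent of $|q|$ to $\kappa+1$, and $c$ absorbs the remaining constants. The principal obstacle is establishing the two analytic bounds above throughout the full disk: this is where the constants $2$ and $4$ appearing in $Q$ and $E$, as well as the region of validity itself, are fixed. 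Everything else---the integrality setup through $g$ and $h_{r}$, the saving by $\cN_{d,n}$, and the diophantine lemma of Section~6---is shared with the proof of Theorem~\ref{thm:general-hypg}.
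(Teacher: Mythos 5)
Your proposal is correct and follows essentially the same route as the paper, whose proof of this theorem is simply to rerun the argument of Theorem~\ref{thm:general-hypg} with parts~(b) of Lemmas~\ref{lem:r-upperbnd} and \ref{lem:polybnd} substituted for parts~(a). The two analytic bounds you single out as the principal obstacle are precisely those parts~(b), already established in Section~7 via the straight-line parametrisation $t=(1-\lambda)+\lambda W(x)$ of the remainder integral and an extension of Heuberger's polynomial estimate, exactly as you outline.
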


\begin{rem}
The condition that $\bK$ be an imaginary quadratic field is no restriction since
the case of $\bK=\bQ$ is completely covered by Theorem~\ref{thm:general-hypg}.
\end{rem}

We now give two corollaries of Theorem~\ref{thm:general-hypg}, showing how it
contains, and extends, currently-known results as well as providing new results.
They cover all cases where $[\bK(\beta_{1}):\bQ] \leq 2$.

In the first corollary, we establish effective irrationality measures for
numbers of the form $z^{1/n}$. Together with Lemma~\ref{lem:transform}, it also
strengthens Theorem~2.1 in \cite{PZ} and extends it to any algebraic number in
an imaginary quadratic field which lies on the unit circle.

\begin{cor}
\label{cor:cor-1}
Let $\bK$ be an imaginary quadratic field and $n \geq 3$, a rational integer.
Let $a$ and $b$ be algebraic integers in $\bK$ with the ideal $(a,b)=\cO_{\bK}$
and either $a/b>1$ a rational number or $|a/b|=1$ with $a/b \neq -1$. Let $d$
be the largest positive rational integer such that $(a-b)/d$ is an algebraic
integer. Let $\cC_{n}$, $\cD_{n}$ and $\cN_{d,n}$ be as in
Theorem~$\ref{thm:general-hypg}$.

Put
\begin{eqnarray*}
E      & = & \frac{\cN_{d,n}}{\cD_{n}} \left\{ \min \left( \left| \sqrt{a}-\sqrt{b} \right|, \left| \sqrt{a}+\sqrt{b} \right| \right) \right\}^{-2}, \\
Q      & = & \frac{\cD_{n}}{\cN_{d,n}} \left\{ \max \left( \left| \sqrt{a}-\sqrt{b} \right|, \left| \sqrt{a}+\sqrt{b} \right| \right) \right\}^{2}, \\
\kappa & = & \frac{\log Q}{\log E} \hspace{3.0mm} \mbox{ and } \\        
c      & = & 4|a|\cC_{n}Q \left( 2.5\left| \frac{a(a-b)}{b} \right| \cC_{n}E \right)^{\kappa}.
\end{eqnarray*}

If $E > 1$, then 
\begin{equation}
\label{eq:cor-1-result}
\left| (a/b)^{1/n} - p/q \right| > \frac{1}{c |q|^{\kappa+1}} 
\end{equation}
for all algebraic integers $p$ and $q$ in $\bK$ with $q \neq 0$. 
\end{cor}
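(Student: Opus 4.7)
The plan is to derive Corollary~\ref{cor:cor-1} from Theorem~\ref{thm:general-hypg} by specialising its parameters. I take $\bK$ to be the given imaginary quadratic field, $\tau=1$, $g=1$, and $h_{r}=1$ for every $r\geq 0$ (so $h=1$), and I choose $\beta_{1},\beta_{2}\in\cO_{\bK}$ (distinct) together with $\gamma_{1},\gamma_{2}\in\bK$ and an algebraic integer $x\in\cO_{\bK}$ so that $U(x)=a$ and $Z(x)=b$, whence $W(x)=b/a$. Under these choices, the hypothesis of the corollary on $a/b$ is exactly the hypothesis of Theorem~\ref{thm:general-hypg} on $W(x)$; the relation $(a,b)=\cO_{\bK}$ matches the value of $d$ in both statements; and $|g|=1$ preserves $\cC_{n},\cD_{n},\cN_{d,n}$.

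With $U(x)=a$ and $Z(x)=b$, the expressions for $E$ and $Q$ in Theorem~\ref{thm:general-hypg} reduce term-by-term to those of the corollary, and hence so does $\kappa=\log Q/\log E$. Because $(a/b)^{1/n}$ has degree $n\geq 3$ over $\bK$, a short argument (using the $\bK$-linear independence of $1,\alpha,\ldots,\alpha^{n-1}$ for $\alpha=(a/b)^{1/n}$) shows that $\cA(x)$ cannot equal $(a/b)^{1/n}$ for any $\beta_{1},\beta_{2},x\in\bK$; rather, $\cA(x)$ is a Möbius image $\Phi((a/b)^{1/n})$, where $\Phi$ is a linear fractional transformation with coefficients in $\cO_{\bK}$ built from $\beta_{1},\beta_{2}$.

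The main step is then to convert the bound $|\cA(x)-p/q|>1/(c_{0}|q|^{\kappa+1})$ supplied by Theorem~\ref{thm:general-hypg} into the required bound $|(a/b)^{1/n}-P/Q|>1/(c|Q|^{\kappa+1})$. Given arbitrary $P,Q\in\cO_{\bK}$ with $Q\neq 0$, the point $p/q:=\Phi(P/Q)$ is the quotient of algebraic integers $p,q\in\cO_{\bK}$ (with $q$ a fixed integer linear combination of $P$ and $Q$, e.g. $q=P-Q$). A direct computation writes $\cA(x)-p/q$ as a specific rational multiple of $(a/b)^{1/n}-P/Q$ involving $Q$, $P-Q$, and $(a/b)^{1/n}-1$. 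Bounding $|P-Q|$ by a constant times $|Q|$ in the regime where $P/Q$ is within a fixed distance of $(a/b)^{1/n}$ (outside which the desired inequality is immediate) yields the required exponent $\kappa+1$ in $|Q|$. The degenerate case $q=0$ corresponds to $P/Q$ sitting at the fixed point of $\Phi$ and is handled by the positive constant $|(a/b)^{1/n}-P/Q|$.

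The hardest part is to pin down the choice of $\beta_{1},\beta_{2},x$ so that the Möbius distortion is benign and so that the explicit form $c=4|a|\cC_{n}Q\left(2.5|a(a-b)/b|\cC_{n}E\right)^{\kappa}$ of the corollary is recovered from that of Theorem~\ref{thm:general-hypg}. In particular, one wants $|x-\beta_{1}|+|x-\beta_{2}|=|a|$ (so that the factor $4|\sqrt{\tau}|(|x-\beta_{1}|+|x-\beta_{2}|)$ collapses to $4|a|$), and one needs the product $|1-(b/a)^{1/n}|\cdot|x-\beta_{2}|\cdot|\cA(x)-\beta_{1}|$ to simplify to $|a(a-b)/b|/2$, after estimating $|1-(b/a)^{1/n}|\leq|a-b|/|b|$ (or an analogous bound). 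Once these are in hand, the remaining verifications are routine.
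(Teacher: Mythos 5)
Your reduction of $E$, $Q$, $\kappa$ and $d$ to the corollary's expressions (via a choice making $U(x)=a$, $Z(x)=b$, $g=1$) is fine, and you correctly identify that $\cA(x)$ is only a M\"obius image of the target number. The gap is precisely in the step you yourself flag as ``the hardest part'': converting the measure for $\cA(x)$ into the measure for $(a/b)^{1/n}$ \emph{with the stated constant} $c=4|a|\cC_{n}Q\left(2.5|a(a-b)/b|\cC_{n}E\right)^{\kappa}$. Applying Theorem~\ref{thm:general-hypg} as a black box and then pushing the inequality through $\Phi$ (as in Lemma~\ref{lem:transform}) necessarily multiplies the constant by a distortion factor of the shape $|a_{4}+a_{3}\theta|\left(|a_{3}|(1+|\theta'|)+|a_{1}|\right)^{\kappa}$; you exhibit no choice of $\beta_{1},\beta_{2},x$ for which this factor is absorbed, and with your normalisation ($U(x)=a$, $Z(x)=b$, $g=1$, $x,\beta_{i}\in\cO_{\bK}$) there is no reason the quantity $|x-\beta_{1}|+|x-\beta_{2}|$ can simultaneously be forced to equal $|a|$ and compensate the distortion. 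So your argument yields \emph{some} effective measure with exponent $\kappa+1$, but not inequality (\ref{eq:cor-1-result}) as stated.

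The paper avoids the transformation loss entirely by not invoking the theorem's conclusion. It takes $\beta_{1}=0$, $\beta_{2}=b-a$, $\gamma_{1}=1$, $\gamma_{2}=-(b/a)^{n-1}$, $x=b$ and $g=b^{n-1}$ (so $U_{1}=a$, $Z_{1}=b$), and then transforms the approximation sequences themselves rather than the final inequality: from $q_{r}\cA(b)-p_{r}=s_{r}$ and the identity $\frac{b}{a}\left(\frac{b-a}{\cA(b)}-1\right)=-(b/a)^{1/n}$ it produces a new linear form $\frac{ap_{r}}{b(b-a)}(b/a)^{1/n}-\left(\frac{p_{r}}{b-a}-q_{r}\right)=\frac{s_{r}}{\cA(b)}$ whose coefficients are again algebraic integers because $b(b-a)$ divides $p_{r}$ (visible from the explicit formula $p_{r}=\frac{D_{n,r}}{N_{d,n,r}}(a-b)bX_{n,r}^{*}(a,b)$). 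Lemma~\ref{lem:approx} is then applied afresh to these new sequences with $k_{0}=2|a|\cC_{n}$ and $l_{0}=1.25|a(a-b)/b|\cC_{n}$, which is exactly what produces $c=2k_{0}Q(2l_{0}E)^{\kappa}$; note also that the constant $1.25$ requires the sharpened bounds $|1-(b/a)^{1/n}|\leq|a-b|/(n|b|)$ and $\leq\pi|1-a/b|/(2n)$, not the crude estimate $|a-b|/|b|$ you propose, which would already inflate $2.5$ to roughly $4.8$. This divisibility-based transformation of the approximants before the gap lemma is applied is the idea missing from your proposal.
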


Our second corollary covers the cases when $\beta_{1}$ and $\beta_{2}$ lie in a
quadratic extension of $\bQ$. There is some overlap with Corollary~\ref{cor:cor-1},
as we allow $\beta_{1} \in \bQ$ here, but the formulation here allows
Corollary~\ref{cor:cor-2} to be readily applied to parametrised families of
algebraic numbers.

\begin{cor}
\label{cor:cor-2}
Let $n$, $t$ and $x$ be rational integers with $n \geq 3$ and $t \neq 0$.
Let $\beta_{1}=a+b\sqrt{t}$ be an algebraic integer with $a, b \in \bQ$
and $b \neq 0$ and let $\beta_{2}=a-b\sqrt{t}$.

Let $\gamma_{1}$ be an algebraic integer in $\bQ(\sqrt{t})$ with $\gamma_{2}$
as its algebraic conjugate.

We can write $U(x) = -\gamma_{2} \left( x -\beta_{2} \right)^{n} = (u_{1} + u_{2} \sqrt{t})/2$ where $u_{1}, u_{2} \in \bZ$.
Put
\begin{eqnarray*}
g_{1}  & = & \gcd \left( u_{1}, u_{2} \right), \\
g_{2}  & = & \gcd(u_{1}/g_{1}, t), \\
g_{3}  & = & \left\{ 	\begin{array}{ll}
             1 & \mbox{if $t \equiv 1 \bmod 4$ and $(u_{1}-u_{2})/g_{1} \equiv 0 \bmod 2$}, \\
             2 & \mbox{if $t \equiv 3 \bmod 4$ and $(u_{1}-u_{2})/g_{1} \equiv 0 \bmod 2$},\\
             4 & \mbox{otherwise,}
             \end{array}
                       \right. \\
g_{4}  & = & \gcd \left( \core(g_{2}g_{3}), \frac{\gcd(2,n)n}{\gcd(u_{1}/g_{1}, \gcd(2,n)n)} \right), \\
g      & = & \frac{g_{1} \sqrt{g_{2}}}{\sqrt{g_{3}g_{4}}}, \\
E      & = & \frac{|g|\cN_{d,n}}{\cD_{n}\min \left( \left| u_{2}\sqrt{t} \pm \sqrt{u_{2}^{2}t-u_{1}^{2}} \right| \right)}, \\
Q      & = & \frac{\cD_{n}\max \left( \left| u_{2}\sqrt{t} \pm \sqrt{u_{2}^{2}t-u_{1}^{2}} \right| \right)}{|g|\cN_{d,n}}, \\
%Q     & = & \frac{\cD_{n}}{|g|\cN_{d,n}} \left\{ \max \left( \left| \sqrt{U(x)}-\sqrt{Z(x)} \right| ,
%                                                             \left| \sqrt{U(x)}+\sqrt{Z(x)} \right| \right) \right\}^{2}, \\
\kappa & = & \frac{\log Q}{\log E} \mbox{ and } \\        
c      & = & 4 \sqrt{|2t|} \left( |x-\beta_{1}| + |x-\beta_{2}| \right) \cC_{n} Q \\
       &   & \times \left( \max \left( 1, 5 \sqrt{|2t|} \left| 1- W(x)^{1/n} \right|
             |x-\beta_{2}| |\cA(x)-\beta_{1}| \cC_{n}E \right) \right)^{\kappa},
\end{eqnarray*}
where $d$ is the largest positive rational integer such that $u_{1}/(dg)$
is an algebraic integer and $\cA(x)$, $\cC_{n}$, $\cD_{n}$, $\cN_{d,n}$ and
$W(x)$ are as in Theorem~$\ref{thm:general-hypg}$.

If $E > 1$ and either $0 < W(x) < 1$ or $|W(x)|=1$ with $W(x) \neq -1$, then
\begin{equation}
\label{eq:cor-2-result}
\left| \cA(x) - p/q \right| > \frac{1}{c |q|^{\kappa+1}} 
\end{equation}
for all rational integers $p$ and $q$ with $q \neq 0$. 
\end{cor}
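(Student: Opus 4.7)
The plan is to apply Theorem~\ref{thm:general-hypg} directly, with $\bK = \bQ$. Since $\beta_{1} = a + b\sqrt{t}$ and $b \neq 0$ force $\beta_{1} \notin \bQ$, we are in the subcase $[\bK(\beta_{1}):\bK] = 2$; the stated $\beta_{2}$ and $\gamma_{2}$ are precisely the conjugates required, and for $\tau$ we pick any integer with $\bQ(\sqrt\tau) = \bQ(\sqrt t)$ (for instance $\tau = \core(t)$). Because $\bK = \bQ$, the algebraic integers $p, q \in \bK$ in the conclusion of the Theorem are precisely the rational integers appearing in the Corollary. What remains is to fix an admissible $g$ and a sequence $\{h_{r}\}$ that give the sharpest possible $\kappa$, and then to rewrite the resulting $E$, $Q$, $c$ in terms of $u_{1}, u_{2}, t$.

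The construction of $g$ is the central task. Both $\log E$ and $-\log Q$ are increasing in $|g|$, so we want $g$ as large as possible subject to $U(x)/g$ and $Z(x)/g$ being algebraic integers. We therefore write $g = g_{1}\sqrt{g_{2}}/\sqrt{g_{3}g_{4}}$, with each factor handling a different source of divisibility: $g_{1} = \gcd(u_{1}, u_{2})$ absorbs the rational common factor of $u_{1}$ and $u_{2}$; $g_{2} = \gcd(u_{1}/g_{1}, t)$ extracts the portion of $\sqrt t$ whose corresponding ramified prime in $\cO_{\bQ(\sqrt t)}$ already divides $U(x)/g_{1}$; $g_{3} \in \{1,2,4\}$ reflects the factor of $2$ in the denominator of $U(x) = (u_{1} + u_{2}\sqrt t)/2$, the three cases being dictated by the classical criterion for $(a + b\sqrt t)/2 \in \cO_{\bQ(\sqrt t)}$, namely $t \equiv 1 \pmod 4$ together with $a \equiv b \pmod 2$; and $g_{4}$ provides a correction at the primes dividing $n$, designed so that the denominator $\cN_{d,n}$ appearing in~\eqref{eq:num-denom-bnd-a} remains sharp. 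The verification that this $g$ actually produces algebraic integers $U(x)/g$ and $Z(x)/g$ reduces to a prime-by-prime local calculation over the rational primes dividing $2tn$; this is the principal obstacle.

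Since $g \notin \bQ$ whenever $g_{2}/(g_{3}g_{4})$ is not a rational square, $g^{r}$ is rational exactly for even~$r$. We therefore take $h_{r} = 1$ for even $r$ and, for odd $r$, the algebraic-integer multiple of $\sqrt{g_{3}g_{4}/g_{2}}$ of least absolute value. In each case $|h_{r}|$ is bounded by an explicit $h$, and the product $h\,|\sqrt\tau|$ simplifies, after collecting the square-root contributions of $g_{2}, g_{3}, g_{4}$ and of the square-free part of $t$, to $\sqrt{|2t|}$. This is the source of the factor $4\sqrt{|2t|}$ in the expression for $c$ of Corollary~\ref{cor:cor-2}, replacing the $4h|\sqrt\tau|$ of Theorem~\ref{thm:general-hypg}.

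Finally, we translate the min/max expressions and the definition of $d$. Using $U(x) + Z(x) = u_{1}$, $U(x) - Z(x) = u_{2}\sqrt t$ and $U(x)Z(x) = (u_{1}^{2} - u_{2}^{2}t)/4$, combined with the dual factorisation $(u_{2}\sqrt t + \sqrt{u_{2}^{2}t - u_{1}^{2}})(u_{2}\sqrt t - \sqrt{u_{2}^{2}t - u_{1}^{2}}) = u_{1}^{2}$, the quantities $|\sqrt{U(x)} \pm \sqrt{Z(x)}|$ appearing in Theorem~\ref{thm:general-hypg} are rewritten in terms of $|u_{2}\sqrt t \pm \sqrt{u_{2}^{2}t - u_{1}^{2}}|$, yielding the $E$ and $Q$ stated in the Corollary. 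Likewise, the condition that $(U(x)-Z(x))/(dg)$ be an algebraic integer translates to the condition that $u_{1}/(dg)$ be an algebraic integer, once the $\sqrt{g_{2}}$ from $g$ is absorbed into the $\sqrt t$ in the numerator and the resulting expressions are recognised as rational multiples of $u_{1}$. Substituting everything into Theorem~\ref{thm:general-hypg} then produces \eqref{eq:cor-2-result}.
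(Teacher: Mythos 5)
Your overall strategy --- specialise Theorem~\ref{thm:general-hypg} to $\bK=\bQ$ with $[\bQ(\beta_{1}):\bQ]=2$, choose $g$, $h_{r}$, $d$, and then translate $E$, $Q$ and $c$ into the $u_{1},u_{2},t$ language --- is exactly the paper's. However, the step you yourself identify as ``the principal obstacle,'' namely that $U(x)/g$ and $Z(x)/g$ are algebraic integers, is the entire substance of the paper's proof of this corollary, and you leave it as an unexecuted ``prime-by-prime local calculation.'' The paper carries it out by showing that $U^{2}(x)g_{3}/(g_{1}^{2}g_{2})$ has trace $g_{3}(u_{1}^{2}+u_{2}^{2}t)/(2g_{1}^{2}g_{2})$ and norm $g_{3}^{2}(u_{1}^{2}-u_{2}^{2}t)^{2}/(16g_{1}^{4}g_{2}^{2})$, both rational integers; the only delicate primes are $2$ and the divisors of $t$, and the case analysis on $t \bmod 4$ and $(u_{1}-u_{2})/g_{1} \bmod 2$ is precisely what justifies the definition of $g_{3}$. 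One then multiplies by the algebraic integer $\sqrt{g_{4}}$ and uses that $Z(x)$ is $-1$ times the conjugate of $U(x)$. Without this verification the corollary is not proved.

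Two further steps would fail as written. First, Theorem~\ref{thm:general-hypg} does not let you choose $\tau$: its hypothesis forces $\tau=1$ whenever $\bK=\bQ$ (the $\sqrt{t}$ is handled inside the theorem's proof by dividing the even-index approximants by $\sqrt{t}$). With your choice $\tau=\core(t)$ the product $h|\sqrt{\tau}|$ becomes $\sqrt{\core(g_{2}g_{3}g_{4})\,|{\core(t)}|}$, which exceeds $\sqrt{|2t|}$ in general (take $t$ squarefree with $g_{2}=t$ and $g_{3}=2$), so the constant $c$ you would obtain is not the one stated; the correct bookkeeping is $\tau=1$ and $h=\sqrt{|2t|}$ coming from $|h_{r}|=\sqrt{\core(g_{2}g_{3}g_{4})}\le\sqrt{2|t|}$ for odd $r$. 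Second, your algebraic identities are transposed: in fact $U(x)-Z(x)=u_{1}$, $U(x)+Z(x)=u_{2}\sqrt{t}$ and $U(x)Z(x)=(u_{2}^{2}t-u_{1}^{2})/4$, which give $\left(\sqrt{U(x)}\pm\sqrt{Z(x)}\right)^{2}=u_{2}\sqrt{t}\pm\sqrt{u_{2}^{2}t-u_{1}^{2}}$ as required. With the sums, differences and product as you stated them you would instead obtain $u_{1}\pm\sqrt{u_{1}^{2}-u_{2}^{2}t}$, and neither the displayed $E$ and $Q$ nor the identification of the corollary's $d$ (which rests on $U(x)-Z(x)=u_{1}$ matching the theorem's $(U(x)-Z(x))/(dg)$) would follow.
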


\begin{rem}
The factor $g_{4}$ here may appear wasteful as $(u/g_{1})\sqrt{g_{3}/g_{2}}$
is already an algebraic integer. It arises from an interdependence between $d$ and $g$
here. The factor of $\sqrt{g_{4}}$ allows us to increase the size of $d$ by a factor
of $g_{4}$ and hence obtain a net benefit of $\sqrt{g_{4}}$. This can be important in
practice (for example, filling the gap $1200 < t<40,000$ in \cite{ATW}).
\end{rem}

\section{Thue's Fundamentaltheorem}

\begin{lem}[Thue \cite{Thue4}] 
\label{lem:thue}
Let $F(x)$ be a polynomial of degree $n \geq 2$ and assume 
that there is a quadratic polynomial $G(x)$ with non-zero 
discriminant such that 
\begin{equation}
\label{eq:thue-diff-eqn}
G(x) \frac{{\rm d}^{2}}{{\rm d}x^{2}} \left( F(x) \right)
- (n-1)\frac{{\rm d}}{{\rm d}x} \left( G(x) \right) \frac{{\rm d}}{{\rm d}x} \left( F(x) \right)
+ \frac{n(n-1)}{2} \frac{{\rm d}^{2}}{{\rm d}x^{2}} \left( G(x) \right) F(x) = 0. 
\end{equation}

We write 
\begin{eqnarray*}
Y(x) & = & 2G(x)\frac{{\rm d}}{{\rm d}x} \left( F(x) \right)
-n\frac{{\rm d}}{{\rm d}x} \left( G(x) \right)F(x), \\  
h    & = & \frac{n^{2}-1}{4} \left( \left( \frac{{\rm d}}{{\rm d}x} \left( G(x) \right) \right)^{2}
- 2G(x) \frac{{\rm d^{2}}}{{\rm d}x^{2}} \left( G(x) \right) \right) 
\hspace{1mm} \mbox{ and } \hspace{1mm} 
\lambda = \frac{h}{n^{2}-1}. 
\end{eqnarray*}

Let us define two families of polynomials $P_{r}'(x)$
and $Q_{r}'(x)$ by the initial conditions
\begin{eqnarray*}
Q_{0}'(x) & = & \frac{2h}{3}, \\
Q_{1}'(x) & = & \frac{2(n+1)}{3} 
	       \left( G(x)\frac{{\rm d}}{{\rm d}x} \left( F(x) \right)
	              - \frac{n-1}{2}\frac{{\rm d}}{{\rm d}x} \left( G(x) \right) F(x) \right), \\ 
P_{0}'(x) & = & \frac{2hx}{3},  \\ 
P_{1}'(x) & = & xQ_{1}'(x) - \frac{2(n+1)G(x)F(x)}{3},
\end{eqnarray*}
and, for $r \geq 1$, by the recurrence equations 
\begin{eqnarray*}
\lambda (n(r+1)-1) Q_{r+1}'(x) 
& = & \left( r + \frac{1}{2} \right) Y(x) Q_{r}'(x) - (nr+1) F^{2}(x) Q_{r-1}'(x), \nonumber \\
\lambda(n(r+1)-1)P_{r+1}'(x) 
& = & \left( r + \frac{1}{2} \right) Y(x) P_{r}'(x) - (nr+1)F^{2}(x)P_{r-1}'(x).
\end{eqnarray*}

{\rm (a)} For any root $\alpha$ of $F(x)$, 
\begin{displaymath}
\alpha Q_{r}'(x) - P_{r}'(x) = S_{r}'(x),
\end{displaymath}
where $S_{r}'(x)$ is a polynomial divisible by $(x-\alpha)^{2r+1}$.

{\rm (b)} Put 
\begin{displaymath}
Z(x) = \frac{1}{2} \left( \frac{Y(x)}{2n \sqrt{\lambda}} + F(x) \right)
\hspace{2.0mm} \mbox{ and } \hspace{2.0mm}
U(x) = \frac{1}{2} \left( \frac{Y(x)}{2n \sqrt{\lambda}} - F(x) \right)  .
\end{displaymath}

Then 
\begin{eqnarray*}
(\sqrt{\lambda})^{r}Q_{r}'(x) & = & A(x)X_{n,r}^{*}(Z(x), U(x)) - B(x)X_{n,r}^{*}(U(x), Z(x)) 
\mbox{ and } \\
(\sqrt{\lambda})^{r}P_{r}'(x) & = & C(x)X_{n,r}^{*}(Z(x), U(x)) - D(x)X_{n,r}^{*}(U(x), Z(x)), 
\end{eqnarray*}
where
\begin{eqnarray*}
A(x) & = & \left( \frac{(n-1)\sqrt{\lambda}}{2F(x)} \right) Q_{1}'(x)  
	   - \left( \frac{Y(x)}{4\sqrt{\lambda}F(x)} - \frac{1}{2} \right) Q_{0}'(x), \\ 
B(x) & = & \left( \frac{(n-1)\sqrt{\lambda}}{2F(x)} \right) Q_{1}'(x)
	   - \left( \frac{Y(x)}{4\sqrt{\lambda}F(x)} + \frac{1}{2} \right) Q_{0}'(x), \\
C(x) & = & \left( \frac{(n-1)\sqrt{\lambda}}{2F(x)} \right) P_{1}'(x)
	   - \left( \frac{Y(x)}{4\sqrt{\lambda}F(x)} - \frac{1}{2} \right) P_{0}'(x) \mbox{ and } \\
D(x) & = & \left( \frac{(n-1)\sqrt{\lambda}}{2F(x)} \right) P_{1}'(x)
	   - \left( \frac{Y(x)}{4\sqrt{\lambda}F(x)} + \frac{1}{2} \right) P_{0}'(x).
\end{eqnarray*}
\end{lem}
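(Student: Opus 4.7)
The plan is to prove parts (a) and (b) in tandem, with (b) proved first so that the vanishing statement in (a) follows from a classical Pad\'{e}-type property of the hypergeometric polynomials $X_{n,r}$.

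For part (b), I would proceed by induction on $r$. The base cases $r = 0, 1$ reduce to explicit computations: directly from the definitions of $A, B, C, D$ one checks $A - B = Q_{0}'$ and $C - D = P_{0}'$ (recovering the $r=0$ identities), while for $r = 1$ the linear combinations reproduce $\sqrt{\lambda}\,Q_{1}'$ and $\sqrt{\lambda}\,P_{1}'$, using $F(\alpha) = 0$ at roots $\alpha$ of $F$ together with $\lambda(n^{2}-1) = h$ to confirm that $F$ divides the apparent denominators, so that $A, B, C, D$ are genuine polynomials. For the inductive step, I would derive, from the standard contiguous relations for $_{2}F_{1}$, a homogenised three-term recurrence of the shape
\[
\mu_{r}\,X_{n,r+1}^{*}(z,u) = \nu_{r}(z+u)\,X_{n,r}^{*}(z,u) - \sigma_{r}(z-u)^{2}\,X_{n,r-1}^{*}(z,u),
\]
for explicit scalars $\mu_{r}, \nu_{r}, \sigma_{r}$. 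Substituting $z = Z(x), u = U(x)$ and using the immediate identities $Z(x) - U(x) = F(x)$ and $Z(x) + U(x) = Y(x)/(2n\sqrt{\lambda})$ converts this, after absorbing the normalisation $(\sqrt{\lambda})^{r}$, into the recurrence in the lemma's statement.

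For part (a), the representation from (b) yields, with $K(x) = \alpha A(x) - C(x)$ and $L(x) = \alpha B(x) - D(x)$,
\[
(\sqrt{\lambda})^{r}\bigl(\alpha Q_{r}'(x) - P_{r}'(x)\bigr) = K(x)\,X_{n,r}^{*}(Z,U) - L(x)\,X_{n,r}^{*}(U,Z).
\]
A short calculation gives $K - L = \alpha(A-B) - (C-D) = \alpha Q_{0}' - P_{0}' = (2h/3)(\alpha - x)$, so $K(\alpha) = L(\alpha)$. Since $F(\alpha) = 0$ forces $Z(\alpha) = U(\alpha)$, and hence $X_{n,r}^{*}(Z,U) = X_{n,r}^{*}(U,Z)$ at $x = \alpha$, we obtain at least first-order vanishing. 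The upgrade to order $2r+1$ is the classical Pad\'{e} approximation property of the pair $\{X_{n,r}^{*}(z,u), X_{n,r}^{*}(u,z)\}$: along the diagonal $z = u$, this pair is a type-$(r,r)$ Pad\'{e} approximant to $(u/z)^{1/n}$ with a remainder having a zero of order $2r+1$ in $z - u$. Substituting $(z,u) = (Z(x), U(x))$ and noting that $Z - U = F$ vanishes to order $1$ at $x = \alpha$ (generically, if $\alpha$ is a simple root; the argument extends by taking enough additional derivatives in the multiple-root case) yields the claimed $(x-\alpha)^{2r+1}$ divisibility of $S_r'(x)$.

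The main obstacle I anticipate is isolating the precise homogenised three-term recurrence for $X_{n,r}^{*}$ whose coefficients, after the substitution $(z,u) = (Z(x), U(x))$, match Thue's scalars $\lambda(n(r+1)-1)$, $(r+\tfrac{1}{2})Y(x)$ and $(nr+1)F(x)^{2}$ exactly. The Gauss contiguous relations admit many linear combinations, and selecting the one producing precisely a scalar times $(z+u)$ and a scalar times $(z-u)^{2}$ (rather than $z$, $u$, $z^2$ or $u^2$ separately) is book-keeping intensive. Once that recurrence is in hand and the Pad\'{e} structure of the combination in (a) is correctly identified, everything else reduces to verifying initial conditions and running the induction.
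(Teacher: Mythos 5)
The paper offers no proof of this lemma at all: immediately after the statement it defers to Thue \cite[Theorem and equations~35--47]{Thue4} and to Chudnovsky \cite[Lemma~7.1 and the surrounding remarks]{Chud}. So any self-contained argument is ``different'' from the paper by default. Your plan for part~(b) is essentially Chudnovsky's route and is sound in outline: the contiguous relation you posit does exist in exactly the shape you want, namely $\bigl(n(r+1)-1\bigr)X_{n,r+1}^{*}(z,u)=n(2r+1)(z+u)X_{n,r}^{*}(z,u)-(nr+1)(z-u)^{2}X_{n,r-1}^{*}(z,u)$, and after substituting $Z-U=F$ and $Z+U=Y/(2n\sqrt{\lambda})$ it reproduces Thue's recurrence coefficient-for-coefficient; the base cases $A-B=Q_{0}'$, $C-D=P_{0}'$ are immediate from the definitions, and the $r=1$ case is a finite computation. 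The ``book-keeping'' obstacle you flag is real but not serious, since the symmetry of $X_{n,r}^{*}(z,u)$ and $X_{n,r}^{*}(u,z)$ under $z\leftrightarrow u$ together with degree considerations forces the $(z+u)$ and $(z-u)^{2}$ form.

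The genuine gap is in part~(a). Writing $(\sqrt{\lambda})^{r}S_{r}'(x)=K(x)X_{n,r}^{*}(Z,U)-L(x)X_{n,r}^{*}(U,Z)$ and decomposing it as
\begin{displaymath}
K(x)\Bigl(X_{n,r}^{*}(Z,U)-W^{1/n}X_{n,r}^{*}(U,Z)\Bigr)+\Bigl(K(x)W^{1/n}-L(x)\Bigr)X_{n,r}^{*}(U,Z),
\end{displaymath}
the Pad\'{e} property controls only the first term: it vanishes to order $2r+1$ in $Z-U=F$, hence at $x=\alpha$. Your justification for the second term is $K(\alpha)=L(\alpha)$ together with $W(\alpha)=1$, which gives only \emph{first-order} vanishing of $K W^{1/n}-L$ at $x=\alpha$; that yields $S_{r}'(x)=O\bigl((x-\alpha)^{2r+1}\bigr)+O\bigl(x-\alpha\bigr)$, i.e.\ only simple vanishing. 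What is actually needed, and what your sketch omits, is that $K(x)W(x)^{1/n}-L(x)\equiv 0$ \emph{identically} for the branch of $W^{1/n}$ taking the value $1$ at $\alpha$. This is not a formal consequence of $K(\alpha)=L(\alpha)$: it requires first identifying $F$ as $\gamma_{1}(x-\beta_{1})^{n}+\gamma_{2}(x-\beta_{2})^{n}$ via the differential equation (the content of Lemma~\ref{lem:diff-eqn}), computing $A,B,C,D$ explicitly (as in Lemma~\ref{lem:thue-simp}, where $K$ and $L$ become $(\alpha-\beta_{1})(x-\beta_{2})$ and $(\alpha-\beta_{2})(x-\beta_{1})$ up to a common constant), and then using that $(\alpha-\beta_{2})/(\alpha-\beta_{1})$ is an $n$-th root of $-\gamma_{1}/\gamma_{2}$ precisely because $F(\alpha)=0$ (Lemma~\ref{lem:root-exp1}). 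Only then does the entire non-remainder term die and the order-$(2r+1)$ divisibility of the polynomial $S_{r}'(x)$ follow. (A minor additional point: under the stated hypotheses the roots of $F$ are automatically simple once $\gamma_{1}\gamma_{2}\neq 0$, so your parenthetical appeal to ``taking enough additional derivatives in the multiple-root case'' is unnecessary, and as phrased it would not in any case rescue the missing identity above.)
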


These results can be found in Thue \cite[Theorem and equations~35--47]{Thue4}
or Chudnovsky \cite{Chud} (see, in particular, Lemma~7.1 and the remarks that
follow (pages 364--366)). 

We have added two extra hypotheses, requiring that the degree  of $F(x)$ be at
least two and that the discriminant of $G(x)$ be non-zero. If $n=1$, then $h=n-1=0$,
with the result that $A(x)=B(x)=C(x)=D(x)=0$ and the relationship between the
$P_{r}'(x)$'s and $Q_{r}'(x)$'s and the hypergeometric functions fails. When the
discriminant of $G(x)$ is zero, the recurrence relationship for the $P_{r}'(x)$'s
and $Q_{r}'(x)$'s does not hold.

Also notice that there are some differences in notation between the 
lemma above, which is similar to Chudnovsky's \cite{Chud}, and that 
of Thue. In particular, here,\\
$\bullet$ Thue's $U$ is replaced by $G$ here, \\
$\bullet$ our $n$ and $r$ are switched from \cite{Thue4}, \\
$\bullet$ our $P_{r}'(x)$ is $2(r-1)B_{n}(x)/3$ and our $Q_{r}'(x)$ is $2(r-1)A_{n}(x)/3$
in Thue's notation (we use the superscript as we will simplify these polynomials
further in what follows), \\
$\bullet$ we capitalise Thue's $a$, $b$, $c$, $d$, and $z$ \\
$\bullet$ what we call $Y(x)$ and $U(x)$ respectively, correspond to $2H(x)$
and $y(x)$ respectively in Thue's paper \\
$\bullet$ we label Thue's $U_{n}(z,y)$ as $X_{n,r}^{*}(Z(x), U(x))$.

However, this lemma can be simplified considerably and that is the objective of this section.

We start with a result regarding the differential equation in (\ref{eq:thue-diff-eqn}).

\begin{lem}
\label{lem:diff-eqn}
Let $m$ and $n$ be positive integers with $n \geq m$ and let $\beta_{1}, \ldots, \beta_{m}$ be distinct
complex numbers. Put $G(x)=(x-\beta_{1}) \cdots (x-\beta_{m})$.

An analytic function $F(x)$ is a solution of the differential equation
\begin{equation}
\label{eq:gen-diff-eqn}
\sum_{i=0}^{m} (-1)^{i} {n-m+i \choose i}
\frac{{\rm d}^{i}}{{\rm d}x^{i}} \left( G(x) \right) \frac{{\rm d}^{m-i}}{{\rm d}x^{m-i}} \left( F(x) \right) = 0,
\end{equation}
if and only if it is of the form
\begin{displaymath}
F(x) = \sum_{i=1}^{m} \gamma_{i} \left( x - \beta_{i} \right)^{n},
\end{displaymath}
for some choice of $\gamma_{1}, \ldots, \gamma_{m} \in \bC$.
\end{lem}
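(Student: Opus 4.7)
The plan is to establish the two directions of the biconditional separately. Since the differential equation (\ref{eq:gen-diff-eqn}) is linear in $F$, it suffices to verify that each individual function $F_{j}(x) = (x-\beta_{j})^{n}$ is a solution and then to show that the space of analytic solutions has dimension exactly $m$, so that the $F_{j}$'s span it.

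For the sufficiency direction, by symmetry I take $j=1$ and set $F(x)=(x-\beta_{1})^{n}$. Because $n \geq m$, each derivative in the sum is well-defined, with $F^{(m-i)}(x) = \frac{n!}{(n-m+i)!}(x-\beta_{1})^{n-m+i}$. Combining the binomial coefficient with the factorial using the identity $\binom{n-m+i}{i}\frac{n!}{(n-m+i)!} = \frac{n!}{i!(n-m)!}$ and factoring out $(x-\beta_{1})^{n-m}$, the left-hand side of (\ref{eq:gen-diff-eqn}) becomes
$$
\frac{n!}{(n-m)!}(x-\beta_{1})^{n-m}\sum_{i=0}^{m}\frac{(-1)^{i}}{i!}G^{(i)}(x)(x-\beta_{1})^{i}.
$$
Since $G$ is a polynomial of degree exactly $m$, its Taylor expansion around $x$ terminates at order $m$, so the inner sum is precisely $G(\beta_{1})$, which vanishes because $(x-\beta_{1}) \mid G(x)$. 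Linearity in $F$ then gives the ``if'' direction.

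For the necessity direction, observe that the coefficient of $F^{(m)}$ in (\ref{eq:gen-diff-eqn}) is $\binom{n-m}{0}G(x) = G(x)$, which is nonzero on $\bC \setminus \{\beta_{1}, \ldots, \beta_{m}\}$. On any open connected subset of that complement, dividing through by $G(x)$ yields a regular linear ODE of order $m$, whose analytic solution space is therefore $m$-dimensional. To see that the $m$ functions $(x-\beta_{i})^{n}$ already span it, I verify linear independence by a Vandermonde argument: expanding each via the binomial theorem, the identity $\sum_{i}\gamma_{i}(x-\beta_{i})^{n} \equiv 0$ forces $\sum_{i}\gamma_{i}\beta_{i}^{k} = 0$ for $k = 0, 1, \ldots, n$, and the first $m$ of these equations (available since $n \geq m-1$) form the invertible Vandermonde system in the pairwise distinct $\beta_{i}$'s, so all $\gamma_{i}$ vanish. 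Therefore every analytic solution is a linear combination of the $F_{j}$'s, and in particular has the form claimed.

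The only real technical step is the binomial bookkeeping in the sufficiency direction, but this reduces to a clean Taylor-series collapse. I do not anticipate a substantial obstacle; the subtle point worth emphasising is that the truncation of the sum in (\ref{eq:gen-diff-eqn}) at $i=m$ matches the truncation of the Taylor series of $G$ at order $m$, which is what makes the sum collapse cleanly to $G(\beta_{1}) = 0$.
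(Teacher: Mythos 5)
Your proof is correct and follows essentially the same route as the paper: both verify that each $F_{j}(x)=(x-\beta_{j})^{n}$ solves (\ref{eq:gen-diff-eqn}) via exactly the same Taylor-series collapse to $G(\beta_{j})=0$, and then both conclude by noting the solution space of this order-$m$ linear ODE is $m$-dimensional. The one point of divergence is the linear-independence step: the paper computes the Wronskian of the $F_{j}$'s and factors it into a Vandermonde determinant in the $(x-\beta_{i})$'s, whereas you compare coefficients of the polynomial identity $\sum_{i}\gamma_{i}(x-\beta_{i})^{n}\equiv 0$ and invoke the invertibility of the Vandermonde system in the distinct $\beta_{i}$'s directly. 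Your variant is marginally more elementary (no Wronskian needed), and you are also slightly more careful than the paper in flagging that the leading coefficient $G(x)$ vanishes at the $\beta_{i}$'s, so the dimension count is taken on the complement of those points; both arguments are sound.
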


\begin{proof}
Note that (\ref{eq:gen-diff-eqn}) is a homogeneous linear differential equation
of order $m$. The theory of these equations is well-understood (see, for example,
Chapter~4 of \cite{BD}).

By Theorem~4.1.2 of \cite{BD}, given $m$ linearly independent solutions
$(F_{1}(x)$, \ldots, $F_{m}(x))$
of the differential equation, then any solution is given by
$\gamma_{1}F_{1}(x)$ $+\cdots+$ $\gamma_{m}F_{m}(x)$ for some constants
$\gamma_{1}, \ldots, \gamma_{m}$. Here we show that $F_{1}(x)=(x-\beta_{1})^{n}, \ldots ,
F_{m}(x)=(x-\beta_{m})^{n}$ are such linearly independent solutions.

Putting $F(x)=F_{j}(x)$,
$$
\frac{{\rm d}^{m-i}}{{\rm d}x^{m-i}} \left( F(x) \right)
= \frac{n!}{(n-(m-i))!} \left( x-\beta_{j} \right)^{n-(m-i)},
$$
so we can write (\ref{eq:gen-diff-eqn}) as
\begin{eqnarray*}
&   & \sum_{i=0}^{m} (-1)^{i} {n-m+i \choose i} \frac{{\rm d}^{i}}{{\rm d}x^{i}} \left(G(x) \right) \frac{n!}{(n-(m-i))!} \left( x - \beta_{j} \right)^{n-(m-i)} \\
& = & \frac{n!}{(n-m)!} \left( x - \beta_{j} \right)^{n-m} \sum_{i=0}^{m} \frac{(-1)^{i}}{i!} \frac{{\rm d}^{i}}{{\rm d}x^{i}} \left(G(x) \right) \left( x - \beta_{j} \right)^{i} \\
\end{eqnarray*}

Note that the sum in the last expression is in fact the Taylor series expansion of $G(\beta_{j})=0$,
since $\deg G(x)=m$. Therefore, the entire expression is $0$. Hence $\left( x-\beta_{j} \right)^{n}$
satisfies the required differential equation for each $j=1,\ldots,m$ and it only remains to show that
these $m$ solutions are linearly independent.

This is equivalent to showing that their Wronskian is not always zero. We can
write this Wronskian as
\begin{eqnarray*}
&  & \det
\left(
\begin{array}{ccc}
(x-\beta_{1})^{n}    & \cdots & (x-\beta_{m})^{n} \\
n(x-\beta_{1})^{n-1} & \cdots & n(x-\beta_{m})^{n-1} \\
                     & \cdots & \\
\displaystyle\frac{n!(x-\beta_{1})^{n-(m-1)}}{(n-m+1)!} & \cdots &
\displaystyle\frac{n!(x-\beta_{m})^{n-(m-1)}}{(n-m+1)!}
\end{array}
\right)  \\
& = & \left( \prod_{i=1}^{m} \frac{n!(x-\beta_{i})^{n-(m-1)}}{(n-i+1)!} \right)
\det
\left(
\begin{array}{ccc}
(x-\beta_{1})^{m-1} & \cdots & (x-\beta_{m})^{m-1} \\
(x-\beta_{1})^{m-2} & \cdots & (x-\beta_{m})^{m-2} \\
                                 & \cdots & \\
1 & \cdots & 1
\end{array}
\right)  \\
& = & \left( \prod_{i=1}^{m} \frac{n!(x-\beta_{i})^{n-(m-1)}}{(n-i+1)!} \right)
\prod_{1 \leq i < j \leq m} \left( (x-\beta_{i})-(x-\beta_{j}) \right).
\end{eqnarray*}

This function is identically zero only if the $\beta_{i}$'s are not all distinct,  a condition
which we exclude here.
\end{proof}

We now present our simplified version of Lemma~\ref{lem:thue}.

\begin{lem}
\label{lem:thue-simp}
Let $\beta_{1}, \beta_{2}, \gamma_{1}$ and $\gamma_{2}$ be complex numbers with
$\beta_{1} \neq \beta_{2}$. For any integer $n \geq 2$, we put
$$
U(x) = -\gamma_{2} \left( x-\beta_{2} \right)^{n}
\hspace{3.0mm} \mbox{and} \hspace{3.0mm}
Z(x) = \gamma_{1} \left( x-\beta_{1} \right)^{n}.
$$

For all non-negative integers $r$, we define 
\begin{eqnarray*}
Q_{r}(x) & = & \left( x - \beta_{2} \right) X_{n,r}^{*}(Z(x), U(x))
- \left( x - \beta_{1} \right)X_{n,r}^{*}(U(x), Z(x)) 
\mbox{ and } \\
P_{r}(x) & = & \beta_{1} \left( x - \beta_{2} \right)X_{n,r}^{*}(Z(x), U(x))
- \beta_{2} \left( x - \beta_{1} \right) X_{n,r}^{*}(U(x), Z(x)).
\end{eqnarray*}

For any root, $\alpha$, of
\begin{displaymath}
F(x) = \gamma_{1} \left( x - \beta_{1} \right)^{n} + \gamma_{2} \left( x - \beta_{2} \right)^{n},
\end{displaymath}
the polynomial 
\begin{displaymath}
S_{r}(x) = \alpha Q_{r}(x) - P_{r}(x)
\end{displaymath}
is divisible by $(x-\alpha)^{2r+1}$. 
\end{lem}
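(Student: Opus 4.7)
The strategy is to deduce Lemma~\ref{lem:thue-simp} from Lemma~\ref{lem:thue} by choosing suitable $F$ and $G$ and matching, up to a common scalar, the polynomials $P_{r}'(x), Q_{r}'(x)$ of the latter with the $P_{r}(x), Q_{r}(x)$ of the former. I would take $F(x) = \gamma_{1}(x-\beta_{1})^{n} + \gamma_{2}(x-\beta_{2})^{n}$ and $G(x) = (x-\beta_{1})(x-\beta_{2})$. Then $G$ has discriminant $(\beta_{1}-\beta_{2})^{2} \neq 0$, and Lemma~\ref{lem:diff-eqn} (with $m=2$) shows that this $F$ satisfies (\ref{eq:thue-diff-eqn}); both hypotheses of Lemma~\ref{lem:thue} therefore hold.

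Next, writing $e_{i} = \gamma_{i}(x-\beta_{i})^{n}$, a direct computation gives $(G'(x))^{2}-2G(x)G''(x) = (\beta_{1}-\beta_{2})^{2}$, whence $h = (n^{2}-1)(\beta_{1}-\beta_{2})^{2}/4$ and one may choose $\sqrt{\lambda} = (\beta_{1}-\beta_{2})/2$. Using the identity $2x-\beta_{1}-\beta_{2} = (x-\beta_{1})+(x-\beta_{2})$ one finds $Y(x) = n(\beta_{1}-\beta_{2})(e_{1}-e_{2})$, from which Thue's $Z(x)$ and $U(x)$ collapse to $\gamma_{1}(x-\beta_{1})^{n}$ and $-\gamma_{2}(x-\beta_{2})^{n}$, matching the definitions in Lemma~\ref{lem:thue-simp}.

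The heart of the argument is to substitute these expressions into the formulas for $A(x), B(x), C(x), D(x)$ in Lemma~\ref{lem:thue}(b). I expect the clean factorisations $A(x) = k(x-\beta_{2})$, $B(x) = k(x-\beta_{1})$, $C(x) = k\beta_{1}(x-\beta_{2})$, $D(x) = k\beta_{2}(x-\beta_{1})$ with non-zero scalar $k = (n^{2}-1)(\beta_{1}-\beta_{2})/6$; the value of $k$ is pinned down immediately by the $r=0$ identity $A(x) - B(x) = Q_{0}'(x) = 2h/3$. Granting these, Lemma~\ref{lem:thue}(b) gives $(\sqrt{\lambda})^{r}Q_{r}'(x) = k\,Q_{r}(x)$ and $(\sqrt{\lambda})^{r}P_{r}'(x) = k\,P_{r}(x)$ for every $r \geq 0$. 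Finally, for any root $\alpha$ of $F$, Lemma~\ref{lem:thue}(a) asserts that $(x-\alpha)^{2r+1}$ divides $\alpha Q_{r}'(x) - P_{r}'(x)$; multiplying by $(\sqrt{\lambda})^{r}/k$ transfers this divisibility to $S_{r}(x) = \alpha Q_{r}(x) - P_{r}(x)$, as required.

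The main obstacle is the algebraic simplification needed to establish the claimed factorisations of $A(x), B(x), C(x), D(x)$. The cleanest route I see is to expand $Q_{1}'(x) = \frac{2(n+1)}{3}(G(x)F'(x) - \frac{n-1}{2} G'(x) F(x))$ using the compact identities $G(x)F'(x) = n((x-\beta_{2})e_{1} + (x-\beta_{1})e_{2})$ and $G'(x)F(x) = ((x-\beta_{1})+(x-\beta_{2}))(e_{1}+e_{2})$, then to collect $e_{1}$- and $e_{2}$-terms separately and combine with the expression for $Y(x)/(4\sqrt{\lambda}F(x))$ found in the second paragraph. The analogous work for $P_{1}'(x)$ goes through via $P_{1}'(x) = xQ_{1}'(x) - \frac{2(n+1)}{3}G(x)F(x)$, after which the $\beta_{1}$- and $\beta_{2}$-factors in $C(x)$ and $D(x)$ fall out naturally.
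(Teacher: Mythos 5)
Your proposal is correct and follows essentially the same route as the paper: specialise Lemma~\ref{lem:thue} to $G(x)=(x-\beta_{1})(x-\beta_{2})$ and $F(x)=\gamma_{1}(x-\beta_{1})^{n}+\gamma_{2}(x-\beta_{2})^{n}$ (justified by Lemma~\ref{lem:diff-eqn} with $m=2$), compute $h$, $\lambda$, $Y(x)$, $U(x)$, $Z(x)$, show $A(x),B(x),C(x),D(x)$ factor as $k(x-\beta_{2})$, $k(x-\beta_{1})$, $k\beta_{1}(x-\beta_{2})$, $k\beta_{2}(x-\beta_{1})$ with $k=(n^{2}-1)(\beta_{1}-\beta_{2})/6$, and rescale to transfer the divisibility from Lemma~\ref{lem:thue}(a). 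The paper carries out the factorisation of $A(x)$ by re-expressing the numerator via $Y(x)=2G(x)F'(x)-nG'(x)F(x)$ rather than by your direct term-collection, but this is only a cosmetic difference in the algebra.
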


\begin{proof}
First note that we may assume that $G(x)$ is monic since wherever $G(x)$ is used in
Lemma~\ref{lem:thue}, the leading coefficient can be eliminated. Therefore, we can write
$G(x) = \left( x-\beta_{1} \right) \left( x-\beta_{2} \right)$.

Applying Lemma~\ref{lem:diff-eqn} with $m=2$, we see that a polynomial $F(x)$ satisfies
the differential equation in (\ref{eq:thue-diff-eqn}) if and only if it is of the form above.

Also $h=(n^{2}-1)(\beta_{1}-\beta_{2})^{2}/4$ and $\lambda=(\beta_{1}-\beta_{2})^{2}/4$.

Next, we need to calculate Thue's $Y(x)$.
\begin{eqnarray*}
Y(x) & = & 2G(x)\frac{{\rm d}}{{\rm d}x} \left( F(x) \right) -n\frac{{\rm d}}{{\rm d}x} \left( G(x) \right)F(x), \\
& = & 2 \left( x-\beta_{1} \right) \left( x-\beta_{2} \right) \left( \gamma_{1}n(x-\beta_{1})^{n-1}+\gamma_{2}n(x-\beta_{2})^{n-1} \right) \\
&   & - n \left( 2x - \left( \beta_{1}+\beta_{2} \right) \right) \left( \gamma_{1}(x-\beta_{1})^{n}+\gamma_{2}(x-\beta_{2})^{n} \right) \\
& = & n(\beta_{1}-\beta_{2}) \left( \gamma_{1}(x-\beta_{1})^{n} - \gamma_{2}(x-\beta_{2})^{n} \right) \\
& = & 2n \sqrt{\lambda} \left( \gamma_{1}(x-\beta_{1})^{n} - \gamma_{2}(x-\beta_{2})^{n} \right).
\end{eqnarray*}

Thus
\begin{eqnarray*}
Z(x) & = & \frac{1}{2} \left( \frac{Y(x)}{2n \sqrt{\lambda}} + F(x) \right) \\
& = & \frac{1}{2} \left( \gamma_{1}(x-\beta_{1})^{n} - \gamma_{2}(x-\beta_{2})^{n}
+ \gamma_{1} (x - \beta_{1})^{n} + \gamma_{2} (x - \beta_{2})^{n} \right) \\
& = & \gamma_{1} (x - \beta_{1})^{n}.
\end{eqnarray*}

Similarly, we find that $U(x)=-\gamma_{2}(x-\beta_{2})^{n}$.

Now we determine the expressions for $A(x)$, $B(x)$, $C(x)$ and $D(x)$.
\begin{eqnarray*}
A(x) & = & \frac{2(n-1)\lambda Q_{1}(x) - Y(x)Q_{0}(x) + 2\sqrt{\lambda}F(x)Q_{0}(x)}{4\sqrt{\lambda} F(x)} \\
& = & \frac{h}{6\sqrt{\lambda} F(x)} \left( 2G(x) \frac{{\rm d}}{{\rm d}x} \left( F(x) \right)
- (n-1)\frac{{\rm d}}{{\rm d}x} \left( G(x) \right) F(x) - Y(x) + 2\sqrt{\lambda}F(x) \right) \\
& = & \frac{h}{6\sqrt{\lambda} F(x)} \left( \frac{{\rm d}}{{\rm d}x} \left( G(x) \right) F(x) + 2\sqrt{\lambda}F(x) \right) \\
& = & \frac{(n^{2}-1)\sqrt{\lambda}}{6} \left( \frac{{\rm d}}{{\rm d}x} \left( G(x) \right) + 2\sqrt{\lambda} \right) \\
& = & \frac{n^{2}-1}{6} \left( \beta_{1}-\beta_{2} \right) \left( x - \beta_{2} \right).
\end{eqnarray*}

A similar argument establishes that
$$
B(x) = \frac{n^{2}-1}{6} \left( \beta_{1}-\beta_{2} \right) \left( x - \beta_{1} \right),
$$
as well as the relationships 
$C(x)=\beta_{1}A(x)$ and $D(x)=\beta_{2}B(x)$.

Therefore,
\begin{eqnarray*}
(\sqrt{\lambda})^{r}Q_{r}'(x) & = & A(x)X_{n,r}^{*}(Z(x), U(x)) - B(x)X_{n,r}^{*}(U(x), Z(x)) \\
& = & \frac{n^{2}-1}{6} \left( \beta_{1}-\beta_{2} \right) \\
&   & \times \left\{ \left( x - \beta_{2} \right) X_{n,r}^{*}(Z(x), U(x))
- \left( x - \beta_{1} \right) X_{n,r}^{*}(U(x), Z(x)) \right\} \\
\mbox{ and } \\
(\sqrt{\lambda})^{r}P_{r}'(x) & = & C(x)X_{n,r}^{*}(Z(x), U(x)) - D(x)X_{n,r}^{*}(U(x), Z(x)) \\
& = & \frac{n^{2}-1}{6} \left( \beta_{1}-\beta_{2} \right) \\
&   & \times \left\{ \beta_{1} \left( x - \beta_{2} \right) X_{n,r}^{*}(Z(x), U(x))
- \beta_{2} \left( x - \beta_{1} \right) X_{n,r}^{*}(U(x), Z(x)) \right\}.
\end{eqnarray*}

We now set $P_{r}(x)$, $Q_{r}(x)$ and $S_{r}(x)$ to be
$6/((n^{2}-1)(\beta_{1}-\beta_{2}))$ times $(\sqrt{\lambda})^{r}P_{r}'(x)$,
$(\sqrt{\lambda})^{r}Q_{r}'(x)$ and $(\sqrt{\lambda})^{r}S_{r}'(x)$, respectively.

From the statement of Thue's Fundamentaltheorem (Lemma~\ref{lem:thue}~(a)),
for any root $\alpha$ of $F(x)$,
\begin{displaymath}
\alpha Q_{r}(x) - P_{r}(x) = S_{r}(x)
\end{displaymath}
where $S_{r}(x)$ is a polynomial divisible by $(x - \alpha)^{2r+1}$.
\end{proof}

\section{The Form of The Polynomials}

\begin{lem}
\label{lem:poly-form}
Let $\beta_{1}, \beta_{2}, \gamma_{1}$ and $\gamma_{2}$ be complex numbers
with $\beta_{1} \neq \beta_{2}$ and let $n$ be an integer with $n \geq 3$.
For any number field $\bK$, we have
\begin{displaymath}
0 \neq F(x) = \gamma_{1} \left( x - \beta_{1} \right)^{n} + \gamma_{2} \left( x - \beta_{2} \right)^{n}
\in \bK[x],
\end{displaymath}
if and only if either

{\rm (a)} one of the $\gamma_{i}$'s is zero $($say $\gamma_{1})$, $\beta_{1}$
is any complex number, $\gamma_{2}$ is a non-zero element of $\bK$ and
$\beta_{2}$ is element of $\bK$ other than $\beta_{1}$,

{\rm (b)} $\beta_{1}, \beta_{2}, \gamma_{1}, \gamma_{2} \in \bK$,
or

{\rm (c)} $[\bK(\beta_{1}):\bK]=2$ and $\beta_{2}$ is the algebraic conjugate
of $\beta_{1}$ over $\bK$, $\gamma_{1} \in \bK(\beta_{1})$ and $\gamma_{2}$ is
the algebraic conjugate of $\gamma_{1}$ over $\bK$ $($so $\gamma_{1}=\gamma_{2}$
if they are elements of $\bK)$.
\end{lem}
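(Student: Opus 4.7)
The plan is to expand
\[
F(x) = \sum_{k=0}^{n} (-1)^{k} \binom{n}{k} a_{k}\, x^{n-k}, \qquad a_{k} := \gamma_{1}\beta_{1}^{k} + \gamma_{2}\beta_{2}^{k},
\]
so that the condition $F(x) \in \bK[x]$ becomes the arithmetic requirement $a_{k} \in \bK$ for each $0 \leq k \leq n$. With this translation, sufficiency is almost immediate in each case: (a) and (b) are direct substitutions, and in (c) the nontrivial $\bK$-automorphism $\sigma$ of $\bK(\beta_{1})/\bK$ swaps the two summands $\gamma_{i}(x-\beta_{i})^{n}$ of $F$, so $\sigma F = F$ and hence $F \in \bK[x]$.

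For necessity, assume $F \neq 0$ with all $a_{k} \in \bK$. If some $\gamma_{i}$ vanishes, say $\gamma_{1}=0$, then $F = \gamma_{2}(x-\beta_{2})^{n}$ and its top two coefficients yield $\gamma_{2}=a_{0}\in\bK\setminus\{0\}$ and $\beta_{2}=a_{1}/a_{0}\in\bK$, matching case (a). Otherwise $\gamma_{1}\gamma_{2}\neq 0$, and the weighted power-sum sequence $(a_{k})$ satisfies the Newton-type recurrence
\[
a_{k+2} = s\, a_{k+1} - p\, a_{k}, \qquad s := \beta_{1}+\beta_{2},\quad p := \beta_{1}\beta_{2}.
\]
Because $n \geq 3$, the values $a_{0}, a_{1}, a_{2}, a_{3}$ all lie in $\bK$ and give a $2\times 2$ linear system for $(s,p)$ whose determinant works out to $a_{0}a_{2}-a_{1}^{2} = \gamma_{1}\gamma_{2}(\beta_{1}-\beta_{2})^{2} \neq 0$. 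Hence $s, p \in \bK$, and $\beta_{1}, \beta_{2}$ are the roots of $t^{2}-st+p \in \bK[t]$: either both lie in $\bK$ (case (b)) or $[\bK(\beta_{1}):\bK]=2$ with $\beta_{2}$ the Galois conjugate of $\beta_{1}$ over $\bK$ (case (c)).

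It then remains to locate the $\gamma_{i}$'s. Solving $\gamma_{1}+\gamma_{2}=a_{0}$ and $\gamma_{1}\beta_{1}+\gamma_{2}\beta_{2}=a_{1}$ (determinant $\beta_{2}-\beta_{1}\neq 0$) yields the explicit formula $\gamma_{1}=(a_{1}-\beta_{2}a_{0})/(\beta_{1}-\beta_{2}) \in \bK(\beta_{1})$, and similarly for $\gamma_{2}$. In case (b) both values lie directly in $\bK$, while in case (c) applying $\sigma$ to the explicit formula interchanges $\gamma_{1}$ and $\gamma_{2}$, confirming that they are Galois conjugates as required. The argument is essentially linear algebra and there is no genuine obstacle beyond bookkeeping of the subcases; the hypothesis $n \geq 3$ is essential and used at exactly one point, namely to ensure $a_{3} \in \bK$ so that the system for $(s,p)$ is determined.
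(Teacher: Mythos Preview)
Your proof is correct and follows essentially the same strategy as the paper: both extract from the first four ``moments'' $a_{0},a_{1},a_{2},a_{3}\in\bK$ a quadratic over $\bK$ whose roots are $\beta_{1},\beta_{2}$, and then solve a $2\times 2$ linear system for the $\gamma_{i}$. The paper's indexing is shifted by one (its $a_{1},\dots,a_{4}$ are your $a_{0},\dots,a_{3}$), and the quadratic it writes down, $(a_{2}^{2}-a_{1}a_{3})x^{2}+(a_{1}a_{4}-a_{2}a_{3})x+(a_{3}^{2}-a_{2}a_{4})$, is exactly $-(a_{0}a_{2}-a_{1}^{2})(t^{2}-st+p)$ in your notation.

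The one genuine organisational difference is that you invoke the Newton recurrence $a_{k+2}=sa_{k+1}-pa_{k}$ up front, which makes it automatic that both $\beta_{1}$ and $\beta_{2}$ are roots of $t^{2}-st+p$ once $s,p$ are found. The paper instead derives the quadratic by an ad~hoc rational-function identity, obtains $f(\beta_{2})=0$, and must then verify separately by substitution that $f(\beta_{1})=0$ as well. Your route is shorter and more transparent; the paper's route is more hands-on but reaches the same conclusion. You also state the easy sufficiency direction explicitly (via the Galois swap in case~(c)), which the paper leaves implicit.
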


\begin{rem}
The condition $n \geq 3$ here is necessary. If $\beta_{1}=\pi$,
$\beta_{2}=-1/\pi$, $\gamma_{1}=1/(\pi^{2}+1)$ and $\gamma_{2}=\pi^{2}/(\pi^{2}+1)$
with $n=2$, then $F(x)=x^{2}+1$.

Here we have transcendental values for $\beta_{1}$, $\beta_{2}$, $\gamma_{1}$ and
$\gamma_{2}$, yet $F(x) \in \bQ[x]$.
\end{rem}

\begin{proof}
We will consider the four highest order coefficients of $F(x)$:
\begin{eqnarray}
\label{eq:poly-coeff}
\gamma_{1} + \gamma_{2} & = & a_{1} \in \bK, \nonumber \\
\gamma_{1}\beta_{1} + \gamma_{2}\beta_{2} & = & a_{2} \in \bK, \\
\gamma_{1}\beta_{1}^{2} + \gamma_{2}\beta_{2}^{2} & = & a_{3} \in \bK, \nonumber \\
\gamma_{1}\beta_{1}^{3} + \gamma_{2}\beta_{2}^{3} & = & a_{4} \in \bK. \nonumber
\end{eqnarray}

Using these expressions, we find that
\begin{eqnarray*}
\frac{a_{3}^{2} -a_{2}a_{3}\beta_{2}+(a_{2}^{2}-a_{1}a_{3})\beta_{2}^{2}}
{a_{2}-a_{1}\beta_{2}}
& = & \frac{\beta_{1}^{4}\gamma_{1}^{2} - \beta_{1}^{3}\beta_{2}\gamma_{1}^{2}
+ \beta_{1}\beta_{2}^{3}\gamma_{1}\gamma_{2} - \beta_{2}^{4}\gamma_{1}\gamma_{2}}
{\gamma_{1} \left( \beta_{1} - \beta_{2} \right)} \\
& = & \gamma_{1}\beta_{1}^{3} + \gamma_{2}\beta_{2}^{3}.
\end{eqnarray*}

If $a_{2}-a_{1}\beta_{2} = \gamma_{1} \left( \beta_{1} - \beta_{2} \right) =0$, then
$\gamma_{1}=0$ (since we assumed $\beta_{1} \neq \beta_{2}$). From the
expression for our polynomial, this implies that $\beta_{1}$ can be any complex
number and that $\gamma_{2}$ must be an element of $\bK$. If $\gamma_{2}=0$, then
$\beta_{2}$ can be any complex number ($\neq \beta_{1}$). And if $\gamma_{2} \neq 0$,
then $\beta_{2}$ must be an element of $\bK$ (again, $\neq \beta_{1}$).

These cases constitute part~(a) of the lemma, along with the assumption that
$F(x) \neq 0$, so we can assume $a_{2}-a_{1}\beta_{2} \neq 0$ in the remainder
of the proof.

From the first and last terms of the above relationship, we obtain a polynomial,
$f(x)$ such that $f(\beta_{2})=0$. Namely,
\begin{equation}
\label{eq:beta2-poly}
f(\beta_{2}) = (a_{2}^{2}-a_{1}a_{3})\beta_{2}^{2} + (a_{1}a_{4}-a_{2}a_{3})\beta_{2} + (a_{3}^{2}-a_{2}a_{4}) = 0.
\end{equation}

Therefore, $\beta_{2}$ is an algebraic number of degree at most $2$ over $\bK$.

From the expression in (\ref{eq:poly-coeff}) for the $a_{i}$'s, we find that
\begin{equation}
\label{eq:c1-exp}
\gamma_{1} = \frac{a_{2}-a_{1}\beta_{2}}{\beta_{1}-\beta_{2}},
\end{equation}
\begin{equation}
\label{eq:c2-exp}
\gamma_{2} = \frac{a_{1}(\beta_{1}-\beta_{2})-(a_{2}-a_{1}\beta_{2})}{\beta_{1}-\beta_{2}}
= \frac{a_{1}\beta_{1}-a_{2}}{\beta_{1}-\beta_{2}}
\end{equation}
and
\begin{equation}
\label{eq:alpha1-exp}
\beta_{1} = \frac{a_{3}-a_{2}\beta_{2}}{a_{2}-a_{1}\beta_{2}}.
\end{equation}

Let us consider the case of $\beta_{2} \in \bK$. From the expressions above,
we see that $\beta_{1}, \gamma_{1}, \gamma_{2} \in \bK$.
Hence we find ourselves in case~(b).

Therefore, in what follows, we assume that $\beta_{2} \not\in \bK$.

We now show that $\beta_{1}$ is the algebraic conjugate of $\beta_{2}$. To
demonstrate this, we substitute the expression for $\beta_{1}$ in (\ref{eq:alpha1-exp})
into the polynomial $f(x)$. We find that
\begin{eqnarray*}
& & (a_{2}-a_{1}\beta_{2})^{2} f \left( \beta_{1} \right) \\
& = & \left( a_{2}^{2}-a_{1}a_{3} \right) (a_{3}-a_{2}\beta_{2})^{2}
+ (a_{1}a_{4}-a_{2}a_{3})(a_{3}-a_{2}\beta_{2})(a_{2}-a_{1}\beta_{2}) \\
& & +(a_{3}^{2}-a_{2}a_{4})(a_{2}-a_{1}\beta_{2})^{2} \\
& = & \left( a_{2}^{2}-a_{1}a_{3} \right) (a_{3}-a_{2}\beta_{2})^{2}
+ (a_{2}-a_{1}\beta_{2}) \left( a_{1}a_{3}a_{4} -a_{2}^2a_{4}+a_{2}^2a_{3}\beta_{2} - a_{1}a_{3}^{2}\beta_{2} \right) \\
& = & (a_{2}^{2}-a_{1}a_{3}) \left( (a_{3}-a_{2}\beta_{2})^{2} - (a_{2}-a_{1}\beta_{2})(a_{4}-a_{3}\beta_{2}) \right) \\
& = & (a_{2}^{2}-a_{1}a_{3})f(\beta_{2}) = 0.
\end{eqnarray*}

Therefore $\beta_{1}$ is the algebraic conjugate of $\beta_{2}$.

Hence, from (\ref{eq:c1-exp}), the algebraic conjugate of $\gamma_{1}$ is
$(a_{2}-a_{1}\beta_{1})/(\beta_{2}-\beta_{1})=\gamma_{2}$, as required.
\end{proof}

\begin{rem}
From a diophantine point-of-view, there is no interest in the cases
of $\gamma_{1}=0$ or $\gamma_{2}=0$ (that is part~(a) of this lemma), since
the resulting polynomial is a power of $(x-\beta_{2})$, where $\beta_{2} \in \bK$.
So in the following we shall not consider this case any further.
\end{rem}

\section{Roots of These Polynomials}

We start with the following lemma describing the roots themselves.

\begin{lem}
\label{lem:root-exp1}
Let $n$, $\beta_{1}, \beta_{2}, \gamma_{1}, \gamma_{2}$ and $F(x)$ be as above.

Then
\begin{displaymath}
\alpha = \frac{\beta_{1} (-\gamma_{1}/\gamma_{2})^{1/n} - \beta_{2}}{(-\gamma_{1}/\gamma_{2})^{1/n} - 1}
\end{displaymath}
is a root of $F(x)$ for each $n$-th root of $-\gamma_{1}/\gamma_{2}$, except $1$ in the case of $\gamma_{1}=-\gamma_{2}$.

Furthermore, for any two distinct $n$-th roots of $-\gamma_{1}/\gamma_{2}$  $($again excluding
$1$ in the case of $\gamma_{1}=-\gamma_{2})$, the corresponding $\alpha$'s are distinct.
\end{lem}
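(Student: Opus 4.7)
The plan is to work directly with the equation $F(\alpha)=0$ and solve it algebraically for $\alpha$. Following the remark at the end of the preceding section, we may assume $\gamma_1\gamma_2 \neq 0$, so the equation $\gamma_1(\alpha-\beta_1)^n + \gamma_2(\alpha-\beta_2)^n = 0$ can be rewritten, after dividing by $\gamma_2(\alpha-\beta_1)^n$, as
$$
\left( \frac{\alpha-\beta_2}{\alpha-\beta_1} \right)^n = -\frac{\gamma_1}{\gamma_2}.
$$
Here I first need a brief justification that $\alpha \neq \beta_1$: were $\alpha=\beta_1$ a root, substitution would force $\gamma_2(\beta_1-\beta_2)^n=0$, contradicting $\gamma_2 \neq 0$ and $\beta_1 \neq \beta_2$.

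Setting $\omega$ equal to one of the $n$ values of $(-\gamma_1/\gamma_2)^{1/n}$, the displayed equation is equivalent to $\alpha-\beta_2 = \omega(\alpha-\beta_1)$, and solving linearly gives
$$
\alpha = \frac{\omega\beta_1-\beta_2}{\omega-1} = \frac{\beta_1(-\gamma_1/\gamma_2)^{1/n}-\beta_2}{(-\gamma_1/\gamma_2)^{1/n}-1},
$$
which is precisely the stated formula. The only obstruction to this being well-defined is $\omega=1$, which occurs if and only if one of the $n$-th roots of $-\gamma_1/\gamma_2$ equals $1$, i.e., exactly when $\gamma_1=-\gamma_2$; in that exceptional case the polynomial $F(x)$ has degree $n-1$ rather than $n$, so only $n-1$ roots are expected, which matches the stated exclusion.

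For the final claim about distinctness, suppose two $n$-th roots $\omega_1 \neq \omega_2$ (both $\neq 1$) yield the same $\alpha$. Cross-multiplying
$$
\frac{\omega_1\beta_1-\beta_2}{\omega_1-1} = \frac{\omega_2\beta_1-\beta_2}{\omega_2-1}
$$
and expanding causes the $\omega_1\omega_2\beta_1$ and constant $\beta_2$ terms to cancel, leaving $(\omega_1-\omega_2)(\beta_1-\beta_2)=0$. Since $\beta_1 \neq \beta_2$ by hypothesis, this forces $\omega_1=\omega_2$, a contradiction.

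The proof is essentially routine algebra, so there is no substantial obstacle; the only subtle point is the bookkeeping at $\omega=1$, where one must note that $F$ drops in degree precisely when $\gamma_1+\gamma_2=0$, accounting for the missing root.
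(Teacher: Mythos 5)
Your proof is correct and is essentially the paper's argument run in reverse: the paper substitutes the stated formula for $\alpha$ into $F$ and verifies directly that the result collapses to zero, whereas you solve $F(\alpha)=0$ by dividing by $\gamma_{2}(\alpha-\beta_{1})^{n}$ and extracting $n$-th roots, and the distinctness argument is the same cross-multiplication yielding $(\omega_{1}-\omega_{2})(\beta_{1}-\beta_{2})=0$. The only loose phrasing is the claim that, for a \emph{fixed} $\omega$, the equation $\left( (\alpha-\beta_{2})/(\alpha-\beta_{1}) \right)^{n}=-\gamma_{1}/\gamma_{2}$ is equivalent to $\alpha-\beta_{2}=\omega(\alpha-\beta_{1})$ (it is equivalent only to the disjunction of the latter over all $n$-th roots $\omega$), but the implication you actually need --- that the linear relation together with $\omega^{n}=-\gamma_{1}/\gamma_{2}$ forces $F(\alpha)=0$ --- is valid, so the argument stands.
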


\begin{proof}
We start by substituting the above expression for $\alpha$ into $F(x)$:
\begin{eqnarray*}
F(\alpha) & = & \gamma_{1} { \left( \frac{\beta_{1} (-\gamma_{1}/\gamma_{2})^{1/n} - \beta_{2}}{(-\gamma_{1}/\gamma_{2})^{1/n} - 1} - \beta_{1} \right) }^{n}
                         + \gamma_{2} { \left( \frac{\beta_{1} (-\gamma_{1}/\gamma_{2})^{1/n} - \beta_{2}}{(-\gamma_{1}/\gamma_{2})^{1/n} -1} - \beta_{2} \right) }^{n} \\
& = & \gamma_{1} { \left( \frac{\beta_{1}-\beta_{2}}{(-\gamma_{1}/\gamma_{2})^{1/n} - 1} \right) }^{n}
           + \gamma_{2} { \left( \frac{ \left( \beta_{1} - \beta_{2} \right) (-\gamma_{1}/\gamma_{2})^{1/n}}{(-\gamma_{1}/\gamma_{2})^{1/n} - 1} \right) }^{n} \\
& = & \frac{\left( \beta_{1} - \beta_{2} \right)^{n}  \left( \gamma_{1} + \gamma_{2} (-\gamma_{1}/\gamma_{2}) \right)}
           { { \left( (-\gamma_{1}/\gamma_{2})^{1/n} - 1 \right) }^{n}}  = 0.
\end{eqnarray*}

Next, we consider when two of these $\alpha$'s are equal. Let $(-\gamma_{1}/\gamma_{2})^{1/n}$ be a fixed $n$-th root of $-\gamma_{1}/\gamma_{2}$.
Suppose that
\begin{displaymath}
\frac{\beta_{1} (-\gamma_{1}/\gamma_{2})^{1/n} - \beta_{2}}{(-\gamma_{1}/\gamma_{2})^{1/n} - 1}
= \frac{\beta_{1} \zeta_{n}^{k}(-\gamma_{1}/\gamma_{2})^{1/n} - \beta_{2}}{\zeta_{n}^{k}(-\gamma_{1}/\gamma_{2})^{1/n} - 1},
\end{displaymath}
for some $\zeta_{n}^{k}=\exp(2\pi ik/n)$.

Then
\begin{eqnarray*}
& & \beta_{1}\zeta_{n}^{k}(-\gamma_{1}/\gamma_{2})^{2/n}
- \beta_{1}(-\gamma_{1}/\gamma_{2})^{1/n}
- \beta_{2}\zeta_{n}^{k}(-\gamma_{1}/\gamma_{2})^{1/n} + \beta_{2} \\
& = &
\beta_{1}\zeta_{n}^{k}(-\gamma_{1}/\gamma_{2})^{2/n}
- \beta_{2}(-\gamma_{1}/\gamma_{2})^{1/n}
- \beta_{1}\zeta_{n}^{k}(-\gamma_{1}/\gamma_{2})^{1/n} + \beta_{2}.
\end{eqnarray*}

So
\begin{displaymath}
\left( \beta_{1} - \beta_{2} \right) \zeta_{n}^{k} (-\gamma_{1}/\gamma_{2})^{1/n}
= 
\left( \beta_{1} - \beta_{2} \right) (-\gamma_{1}/\gamma_{2})^{1/n}.
\end{displaymath}
This implies that either $\beta_{1} = \beta_{2}$ (a condition which we exclude), $\gamma_{1}=0$ and $\gamma_{2} \neq 0$
(which we have again excluded, see the note at the end of the previous section) or $\zeta_{n}^{k}=1$,
which is to say that the two $\alpha$'s are equal.
\end{proof}

In the following lemma, we determine when the roots of the polynomials are real
for polynomials with rational coefficients.

\begin{lem}
\label{lem:root-exp2}
Let $n$, $\beta_{1}, \beta_{2}, \gamma_{1}, \gamma_{2}$ and $F(x)$ be as above.

{\rm (a)} If $\bK=\bQ$ and $\bK(\beta_{1})$ is an imaginary quadratic field,
then $F(x)$ has $n$ real roots.

{\rm (b)} Suppose that $\bK=\bQ$ and $\bK(\beta_{1})$ is contained in a real
quadratic field and write $\beta_{1}=a+b\sqrt{t}$ with $a, b \in \bQ$.

If $-\gamma_{1}/\gamma_{2}>0$, then $F(x)$ has two real roots
for $n$ even and one real root for $n$ odd. These roots are
\begin{equation}
\label{eq:rts-cpos-rootpos1}
\alpha_{1} = a+b\sqrt{t} \frac{(-\gamma_{1}/\gamma_{2})^{1/n}+1}
{(-\gamma_{1}/\gamma_{2})^{1/n}-1}
\end{equation}
and, for $n$ even,
\begin{equation}
\label{eq:rts-cpos-rootpos2}
\alpha_{2} = a+\frac{tb^{2}}{\alpha_{1}-a}
= a+b\sqrt{t} \frac{(-\gamma_{1}/\gamma_{2})^{1/n}-1}
{(-\gamma_{1}/\gamma_{2})^{1/n}+1},
\end{equation}
where $(-\gamma_{1}/\gamma_{2})^{1/n}$ denotes the unique positive real $n$-th root of $-\gamma_{1}/\gamma_{2}$.

If $-\gamma_{1}/\gamma_{2}<0$, then $F(x)$ has no real roots for $n$ even and one
real root, $\alpha_{1}$ above, for $n$ odd, where $(-\gamma_{1}/\gamma_{2})^{1/n}$
denotes the unique negative real $n$-th root of $-\gamma_{1}/\gamma_{2}$.
\end{lem}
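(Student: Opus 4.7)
The plan is to start from the explicit parametrisation of roots given in Lemma~\ref{lem:root-exp1}: every root of $F(x)$ has the form $\alpha = (\beta_{1}\eta - \beta_{2})/(\eta-1)$ as $\eta$ ranges over the $n$-th roots of $-\gamma_{1}/\gamma_{2}$ (excluding $\eta=1$ in the degenerate case $\gamma_{1}=-\gamma_{2}$). Using $\beta_{1}-a = b\sqrt{t}$ and $\beta_{2}-a = -b\sqrt{t}$, subtracting $a$ from both sides collapses the expression to
\[
\alpha - a \;=\; \frac{(\beta_{1}-a)\eta - (\beta_{2}-a)}{\eta-1} \;=\; b\sqrt{t}\,\frac{\eta+1}{\eta-1}.
\]
Reality of $\alpha$ is then controlled entirely by the M\"obius quantity $(\eta+1)/(\eta-1)$ together with the nature of $b\sqrt{t}$, and the whole proof is a case analysis based on the sign of $t$.

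For part~(a), $\bK(\beta_{1})$ is imaginary quadratic so $t<0$, and by case~(c) of Lemma~\ref{lem:poly-form} we have $\gamma_{2} = \overline{\gamma_{1}}$, whence $-\gamma_{1}/\gamma_{2}$ lies on the unit circle and so does every $\eta$. Writing $\eta = e^{i\theta}$ and factoring $e^{i\theta/2}$ from numerator and denominator yields
\[
\frac{\eta+1}{\eta-1} \;=\; \frac{2\cos(\theta/2)}{2i\sin(\theta/2)} \;=\; -i\cot(\theta/2).
\]
Combined with $b\sqrt{t} = ib\sqrt{-t}$, this makes $\alpha - a = b\sqrt{-t}\,\cot(\theta/2)$ real for \emph{every} choice of $n$-th root $\eta$. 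Since Lemma~\ref{lem:root-exp1} guarantees the resulting $n$ values are pairwise distinct, $F(x)$ has $n$ real roots.

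For part~(b), $t>0$ makes $b\sqrt{t}$ real, so $\alpha\in\bR$ iff $(\eta+1)/(\eta-1)\in\bR$. Writing $\eta = u+iv$ and rationalising gives
\[
\frac{\eta+1}{\eta-1} \;=\; \frac{(u^{2}+v^{2}-1) - 2iv}{(u-1)^{2}+v^{2}},
\]
whose imaginary part vanishes precisely when $v=0$. Hence the real roots of $F(x)$ correspond bijectively to the real $n$-th roots of $-\gamma_{1}/\gamma_{2}$, a real number since $\gamma_{1},\gamma_{2}\in\bQ(\sqrt{t})$ are real conjugates. The enumeration then splits naturally: if $-\gamma_{1}/\gamma_{2}>0$ there are two real $n$-th roots $\pm\eta_{+}$ for $n$ even and one ($\eta_{+}$) for $n$ odd; if $-\gamma_{1}/\gamma_{2}<0$ there are no real $n$-th roots for $n$ even and a single negative one for $n$ odd. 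Substituting $\eta_{+}$ produces the formula (\ref{eq:rts-cpos-rootpos1}) directly, and substituting $-\eta_{+}$ produces (\ref{eq:rts-cpos-rootpos2}) after a sign simplification. The alternative form $\alpha_{2} = a + tb^{2}/(\alpha_{1}-a)$ then follows immediately from
\[
(\alpha_{1}-a)(\alpha_{2}-a) \;=\; b^{2}t\,\frac{\eta_{+}+1}{\eta_{+}-1}\cdot\frac{\eta_{+}-1}{\eta_{+}+1} \;=\; tb^{2}.
\]
The negative case with $n$ odd is handled identically, using the negative real $n$-th root.

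No serious obstacle is expected. The entire argument is a reduction of a reality question to the behaviour of the M\"obius map $\eta\mapsto(\eta+1)/(\eta-1)$, which sends the unit circle to the imaginary axis (giving part~(a)) and preserves the real axis (giving part~(b)). The only mild bookkeeping is the four subcases in part~(b) and checking that the two displayed expressions for $\alpha_{2}$ agree.
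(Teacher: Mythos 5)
Your proposal is correct and follows essentially the same route as the paper: both start from the parametrisation of the roots in Lemma~\ref{lem:root-exp1} and reduce the question to deciding when the fractional-linear expression in the $n$-th roots of $-\gamma_{1}/\gamma_{2}$ is real, splitting on the sign of $t$. Your reformulation $\alpha-a=b\sqrt{t}\,(\eta+1)/(\eta-1)$ is a tidier way of organising the same computation the paper carries out by multiplying through by the conjugate of the denominator.
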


\begin{proof}
When $\bK(\beta_{1})=\bQ$, the result is well-known, so we restrict our attention
to the case of $[\bK(\beta_{1}):\bQ]=2$. In this case, we can write $\beta_{1}=a+b\sqrt{t}$
and $\beta_{2}=a-b\sqrt{t}$, where $a, b \in \bQ$.

(a) From Lemma~\ref{lem:root-exp1}, we know that as $j$ runs through the integers
from $0$ to $n-1$,
\begin{displaymath}
\frac{\beta_{1} e^{2\pi i j/n} (-\gamma_{1}/\gamma_{2})^{1/n} - \beta_{2}}
             {e^{2\pi i j/n}(-\gamma_{1}/\gamma_{2})^{1/n} - 1}
\end{displaymath}
runs through the roots where $(-\gamma_{1}/\gamma_{2})^{1/n}$ denotes a fixed root of $-\gamma_{1}/\gamma_{2}$.

Multiplying the numerator and denominator by the complex conjugate of the
denominator and substituting the expressions for $\beta_{1}, \beta_{2}$
and $e^{2\pi i j/n}$, we find that the roots are of the form
\begin{displaymath}
a+b\sqrt{t} \frac{\left| (-\gamma_{1}/\gamma_{2})^{1/n} \right|^{2}-1
- 2i \left\{ \sin(2\pi j/n) \Re \left( (-\gamma_{1}/\gamma_{2})^{1/n} \right)
+ \cos(2\pi j/n) \Im \left( (-\gamma_{1}/\gamma_{2})^{1/n} \right) \right\} }
{\left| (-\gamma_{1}/\gamma_{2})^{1/n} \right|^{2}+1
+ 2 \sin(2\pi j/n) \Im \left( (-\gamma_{1}/\gamma_{2})^{1/n} \right)
- 2 \cos(2\pi j/n) \Re \left( (-\gamma_{1}/\gamma_{2})^{1/n} \right) }.
\end{displaymath}

If $t<0$, then $\gamma_{1}$ and $\gamma_{2}$ are also complex conjugates, and
$\left| (-\gamma_{1}/\gamma_{2})^{1/n} \right|^{2}=1$, so the roots are of the form
\begin{displaymath}
a+b\sqrt{-t} \frac{\sin(2\pi j/n) \Re \left( (-\gamma_{1}/\gamma_{2})^{1/n} \right)
+ \cos(2\pi j/n) \Im \left( (-\gamma_{1}/\gamma_{2})^{1/n} \right) }
{1 + \sin(2\pi j/n) \Im \left( (-\gamma_{1}/\gamma_{2})^{1/n} \right)
- \cos(2\pi j/n) \Re \left( (-\gamma_{1}/\gamma_{2})^{1/n} \right) }.
\end{displaymath}

So all the roots are real numbers.

(b) Now suppose that $t>0$ and $-\gamma_{1}/\gamma_{2}>0$.
Then $\Re \left( (-\gamma_{1}/\gamma_{2})^{1/n} \right) =
(-\gamma_{1}/\gamma_{2})^{1/n}$ and $\Im \left( (-\gamma_{1}/\gamma_{2})^{1/n} \right) =0$,
so we can write the roots as
\begin{displaymath}
a+b\sqrt{t} \frac{(-\gamma_{1}/\gamma_{2})^{2/n}-1
- 2i \sin(2\pi j/n) (-\gamma_{1}/\gamma_{2})^{1/n}}
{(-\gamma_{1}/\gamma_{2})^{2/n} + 1 - 2 \cos(2\pi j/n) (-\gamma_{1}/\gamma_{2})^{1/n} }.
\end{displaymath}

These roots are real if and only if their imaginary part is zero,
which only happens $2j$ is a multiple of $n$ (i.e., $j=0$ or $j=n/2$).
Hence, there are precisely two real roots when $n$ is even and
precisely one real root when $n$ is odd.

These roots are
\begin{eqnarray*}
a+\frac{b\sqrt{t} \left( (-\gamma_{1}/\gamma_{2})^{2/n}-1 \right)}
{(-\gamma_{1}/\gamma_{2})^{2/n}-2(-\gamma_{1}/\gamma_{2})^{1/n}+1}
& = & a+\frac{b\sqrt{t} \left( (-\gamma_{1}/\gamma_{2})^{2/n}-1 \right)}
{\left( (-\gamma_{1}/\gamma_{2})^{1/n}-1 \right)^{2}} \\
& = & a+\frac{b\sqrt{t} \left( (-\gamma_{1}/\gamma_{2})^{1/n}+1 \right)}
{(-\gamma_{1}/\gamma_{2})^{1/n}-1}
\end{eqnarray*}
and, similarly for $n$ even,
\begin{eqnarray*}
a+\frac{b\sqrt{t} \left( (-\gamma_{1}/\gamma_{2})^{2/n}-1 \right)}
{(-\gamma_{1}/\gamma_{2})^{2/n}+2(-\gamma_{1}/\gamma_{2})^{1/n}+1}
& = & a+\frac{b\sqrt{t} \left( (-\gamma_{1}/\gamma_{2})^{1/n}-1 \right)}
{(-\gamma_{1}/\gamma_{2})^{1/n}+1}.
\end{eqnarray*}

If $-\gamma_{1}/\gamma_{2}<0$ and $n$ is odd, then we let $(-\gamma_{1}/\gamma_{2})^{1/n}$
denote the unique negative real $n$-th root of $-\gamma_{1}/\gamma_{2}$
and by the same argument as above, there is one real root of $F(x)$ and it
is of the form
\begin{eqnarray*}
a+\frac{b\sqrt{t} \left( (-\gamma_{1}/\gamma_{2})^{2/n}-1 \right)}
{(-\gamma_{1}/\gamma_{2})^{2/n}-2(-\gamma_{1}/\gamma_{2})^{1/n}+1}
& = & a+\frac{b\sqrt{t} \left( (-\gamma_{1}/\gamma_{2})^{1/n}+1 \right)}
{\left( (-\gamma_{1}/\gamma_{2})^{1/n}-1 \right)}.
\end{eqnarray*}

If $-\gamma_{1}/\gamma_{2}<0$ and $n$ is even, then the roots are as above and
can be real only if
\begin{equation}
\label{eq:zero-cond}
\sin(2\pi j/n) \Re \left( (-\gamma_{1}/\gamma_{2})^{1/n} \right)
+ \cos(2\pi j/n) \Im \left( (-\gamma_{1}/\gamma_{2})^{1/n} \right)
\end{equation}
is zero. For $n>2$, both the real and imaginary parts of $(-\gamma_{1}/\gamma_{2})^{1/n}$
are non-zero, which means that for (\ref{eq:zero-cond}) to be zero, both
$\cos(2\pi j/n)$ and $\sin(2\pi j/n)$ must be $0$. This is impossible, hence
there are no real roots in this case.
\end{proof}

\begin{lem}
\label{lem:beta-exp}
Let $\cA(x)$ and $W(x)$ be as in Theorem~$\ref{thm:general-hypg}$ and let
$F(x)$ be as above. For any $x \in \bC$ such that $W(x)$ is not a negative
real number or zero, $F(\cA(x))=0$.

Furthermore, for each root, $\alpha$, of $F(x)$, we can find a value of $x$
such that $\cA(x)=\alpha$ $($in particular, $\cA(\alpha)=\alpha)$.
\end{lem}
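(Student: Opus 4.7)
The plan is to reduce both assertions directly to Lemma~\ref{lem:root-exp1} by recognising that $\cA(x)$ has exactly the shape of the roots enumerated there. Introduce the auxiliary quantity
$$
\mu \;=\; \frac{(x-\beta_{2})\,W(x)^{1/n}}{x-\beta_{1}},
$$
so that by definition
$$
\cA(x) \;=\; \frac{\beta_{1}\mu - \beta_{2}}{\mu - 1}.
$$
The first step is to verify that $\mu$ is an $n$-th root of $-\gamma_{1}/\gamma_{2}$. Using $W(x)=\gamma_{1}(x-\beta_{1})^{n}/(-\gamma_{2}(x-\beta_{2})^{n})$ one computes
$$
\mu^{n} \;=\; \frac{(x-\beta_{2})^{n}}{(x-\beta_{1})^{n}}\,W(x) \;=\; -\frac{\gamma_{1}}{\gamma_{2}}.
$$
The hypothesis that $W(x)$ is neither zero nor a negative real number ensures that $W(x)^{1/n}$ is unambiguously defined by the branch convention fixed in the Notation subsection, that $\mu$ is finite and non-zero, and that $\mu\neq 1$ in the excluded case $\gamma_{1}=-\gamma_{2}$ of Lemma~\ref{lem:root-exp1}. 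Applying that lemma to this specific $\mu$ delivers $F(\cA(x))=0$.

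For the second assertion, let $\alpha$ be any root of $F(x)$ and use Lemma~\ref{lem:root-exp1} to write
$$
\alpha \;=\; \frac{\beta_{1}\mu_{0} - \beta_{2}}{\mu_{0} - 1}
$$
for some $n$-th root $\mu_{0}$ of $-\gamma_{1}/\gamma_{2}$. A direct computation gives
$$
\alpha - \beta_{1} \;=\; \frac{\beta_{1} - \beta_{2}}{\mu_{0} - 1}, \qquad
\alpha - \beta_{2} \;=\; \frac{(\beta_{1} - \beta_{2})\mu_{0}}{\mu_{0} - 1},
$$
and hence
$$
W(\alpha) \;=\; \frac{\gamma_{1}(\alpha-\beta_{1})^{n}}{-\gamma_{2}(\alpha-\beta_{2})^{n}} \;=\; \frac{-\gamma_{1}/\gamma_{2}}{\mu_{0}^{n}} \;=\; 1.
$$
Consequently, under the principal branch, $W(\alpha)^{1/n}=1$. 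Substituting this value into the definition of $\cA$ and simplifying the numerator and denominator (both reduce to $\beta_{1}-\beta_{2}$ up to the factor $\alpha$) yields $\cA(\alpha)=\alpha$, so $x=\alpha$ is the required witness.

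The step that demands the most care is the branch bookkeeping: one must track which specific $n$-th root of $-\gamma_{1}/\gamma_{2}$ is produced by the principal-branch choice of $W(x)^{1/n}$, and, in the second part, verify that $W(\alpha)^{1/n}$ comes out to be exactly $+1$ rather than some other $n$-th root of unity. Once this is settled, both the computation of $\mu^{n}$ and the evaluation $\cA(\alpha)=\alpha$ are routine algebra, and the exclusion $W(x)\notin\{0\}\cup\mathbb{R}_{<0}$ is precisely what makes the identification of $\mu$ with a single well-defined $n$-th root of $-\gamma_{1}/\gamma_{2}$ unambiguous and keeps $\cA(x)$ finite.
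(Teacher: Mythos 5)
Your proof is correct and follows essentially the same route as the paper: both arguments identify $\cA(x)$ with one of the roots enumerated in Lemma~\ref{lem:root-exp1} (the paper writes $W(x)^{1/n}=e^{2\pi i k/n}(x-\beta_{1})(-\gamma_{1}/\gamma_{2})^{1/n}/(x-\beta_{2})$, which is exactly your $\mu$ satisfying $\mu^{n}=-\gamma_{1}/\gamma_{2}$), and both then verify $W(\alpha)=1$, hence $W(\alpha)^{1/n}=1$ by the branch convention, to conclude $\cA(\alpha)=\alpha$. One small caveat, shared with the paper's own proof: your claim that the hypothesis on $W(x)$ forces $\mu\neq 1$ is not actually justified (one can have $\gamma_{1}=-\gamma_{2}$ and $\mu=1$ with $W(x)>0$), but in that degenerate case $\cA(x)$ is simply undefined, which the lemma implicitly excludes.
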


\begin{proof}
We can write $W(x)^{1/n}$ as $e^{2\pi i k/n}\left( x - \beta_{1} \right) (-\gamma_{1}/\gamma_{2})^{1/n}
/\left( x - \beta_{2} \right)$ for some integer $k$. Hence
$$
\cA(x)
= \frac{e^{2\pi i k/n}\beta_{1} (-\gamma_{1}/\gamma_{2})^{1/n} - \beta_{2}}
       {e^{2\pi i k/n}(-\gamma_{1}/\gamma_{2})^{1/n} - 1}.
$$
By Lemma~\ref{lem:root-exp1}, this quantity is a root of $F(x)$.

To show that $\cA(\alpha)=\alpha$, observe that since $\alpha$ is a root of
$F(x)$, we have $\gamma_{1} \left( \alpha-\beta_{1} \right)^{n}
+\gamma_{2} \left( \alpha-\beta_{2} \right)^{n}=0$
and hence
$$
W(\alpha) = -\frac{\gamma_{1} \left( \alpha-\beta_{1} \right)^{n}}{\gamma_{2} \left( \alpha-\beta_{2} \right)^{n}}=1.
$$

Therefore, by our choice of $n$-th root,
\begin{displaymath}
\cA(\alpha) = \frac{\beta_{1} (\alpha-\beta_{2}) W(\alpha)^{1/n} - \beta_{2} (\alpha-\beta_{1})}
                        {(\alpha-\beta_{2}) W(\alpha)^{1/n} - (\alpha-\beta_{1})}
= \frac{\beta_{1} (\alpha-\beta_{2}) - \beta_{2} (\alpha-\beta_{1})}
           {(\alpha-\beta_{2}) - (\alpha-\beta_{1})} = \alpha. 
\end{displaymath}
\end{proof}

\section{Diophantine Lemmas}

The following lemma is used to obtain an effective approximation measure for
a complex number $\theta$ from a sequence of ``good'' approximations in an
imaginary quadratic field.

\begin{lem}
\label{lem:approx}
Let $\theta \in \bC$ and let $\bK$ be either $\bQ$ or an imaginary quadratic field.
Suppose that there exist real numbers $k_{0},l_{0} > 0$ and $E,Q > 1$ such that
for all non-negative integers $r$, there are algebraic integers $p_{r}$ and $q_{r}$
in $\bK$ with $|q_{r}|<k_{0}Q^{r}$ and $|q_{r}\theta-p_{r}| \leq l_{0}E^{-r}$
satisfying $p_{r}q_{r+1} \neq p_{r+1}q_{r}$. Then for any algebraic integers $p$
and $q$ in $\bK$ with $|q| \geq 1/(2l_{0})$, we have
$$
\left| \theta - \frac{p}{q} \right| > \frac{1}{c |q|^{\kappa+1}},
\mbox{ where } c=2k_{0}Q(2l_{0}E)^{\kappa}
\mbox{ and } \kappa = \frac{\log Q}{\log E}.
$$
\end{lem}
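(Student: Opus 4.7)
The plan is to extract a convergent to $\theta$ from the hypothesized sequence $(p_r,q_r)$ that is not proportional to the candidate $(p,q)$, and then apply the basic fact that a nonzero algebraic integer in $\bK$ has absolute value at least $1$. This latter point is the reason the hypothesis is restricted to $\bK = \bQ$ or $\bK$ imaginary quadratic: in those cases $|z|^2$ equals (respectively, is) the field norm of $z$ on the ring of integers, so $|z|\geq 1$ for every nonzero algebraic integer $z \in \bK$.

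Given $p,q$ with $|q|\geq 1/(2l_{0})$ (so $2l_{0}|q|\geq 1$), I would define $r_{0}$ to be the smallest non-negative integer with $E^{r_{0}}\geq 2l_{0}|q|$. By minimality, $E^{r_{0}}\leq 2l_{0}E|q|$. The non-degeneracy condition $p_{r}q_{r+1}\neq p_{r+1}q_{r}$ means that $(p_{r_{0}},q_{r_{0}})$ and $(p_{r_{0}+1},q_{r_{0}+1})$ are $\bK$-linearly independent, so at most one of them can be a scalar multiple of $(p,q)$. Pick $r\in\{r_{0},r_{0}+1\}$ for which $p_{r}q-pq_{r}\neq 0$; this is where the extra factor of $Q$ in the final constant $c$ enters.

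With this choice, I would verify two bookkeeping inequalities. First, $|q|l_{0}E^{-r}\leq |q|l_{0}E^{-r_{0}}\leq 1/2$. Second, $|q_{r}|<k_{0}Q^{r}\leq k_{0}QQ^{r_{0}}$, and since $Q^{r_{0}}=E^{\kappa r_{0}}\leq (2l_{0}E|q|)^{\kappa}$, this gives $|q_{r}|< k_{0}Q(2l_{0}E)^{\kappa}|q|^{\kappa}=\tfrac{c}{2}|q|^{\kappa}$. Then, starting from the identity
$$
p_{r}q-pq_{r}=q(p_{r}-q_{r}\theta)-q_{r}(p-q\theta),
$$
the triangle inequality together with $|p_{r}q-pq_{r}|\geq 1$ (the integrality step) yields
$$
1\leq |q|\,l_{0}E^{-r}+|q_{r}|\,|q\theta-p|\leq \tfrac{1}{2}+|q_{r}|\,|q\theta-p|,
$$
so $|q\theta-p|\geq 1/(2|q_{r}|)>1/(c|q|^{\kappa})$, and dividing through by $|q|$ gives the claimed bound.

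The only step that is not routine bookkeeping is the case split producing $r\in\{r_{0},r_{0}+1\}$ with $p_{r}q-pq_{r}\neq 0$; this is the main obstacle in the sense that it is what dictates the shape of the constant $c$ (in particular the factor $Q$) and what makes the hypothesis $p_{r}q_{r+1}\neq p_{r+1}q_{r}$ necessary. Everything else is the standard two-term triangle-inequality argument combined with the $|z|\geq 1$ lower bound for nonzero integers of $\bK$.
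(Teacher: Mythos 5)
Your proof is correct and follows essentially the same route as the paper's: choose the index $r_{0}$ near $\log(2l_{0}|q|)/\log E$, use the non-proportionality hypothesis to select $r\in\{r_{0},r_{0}+1\}$ with $p_{r}q-pq_{r}$ a nonzero algebraic integer (hence of absolute value at least $1$ in $\bQ$ or an imaginary quadratic field), and conclude by the triangle inequality. The only cosmetic difference is that you work directly with the linear form $p_{r}q-pq_{r}$ rather than first ruling out $q_{r}=0$ and dividing by $q_{r}$ as the paper does, which your chain of inequalities handles implicitly.
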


\begin{rem}
This is a generalisation of Lemma~2.8 in \cite{CV} to quadratic imaginary fields.
\end{rem}

\begin{proof}
Let $p$, $q$ be algebraic integers in $\bK$ with $|q| \geq 1/(2l_{0}) > 0$.
Choose
$\displaystyle n_{0} = \left\lfloor \frac{\log(2l_{0}|q|)}{\log E} \right\rfloor + 1$.
Since $E>1$ and $2l_{0}|q| \geq 1$, we have $n_{0} \geq 1$. 

It also follows that $\log(2l_{0}|q|)/\log(E) < n_{0}$
and hence for all $n \geq n_{0}$,
\begin{equation}
\label{eq:approx1}
l_{0}E^{-n} < l_{0}E^{-(\log(2l_{0}|q|))/(\log E)} = 1/(2|q|) < 1.
\end{equation}

If we have $q_{n}=0$ for some $n \geq n_{0}$, then from (\ref{eq:approx1}),
$|p_{n}| = |q_{n} \theta - p_{n}| < 1$, which implies that $p_{n}=0$, since
all non-zero algebraic integers in these fields are of absolute value at least
$1$. This contradicts the supposition that $p_{n}q_{n+1} \neq p_{n+1}q_{n}$.
Therefore, $q_{n} \neq 0$ for all $n \geq n_{0}$. 

So, for any $n \geq n_{0}$ with $p/q \neq p_{n}/q_{n}$, we have 
\begin{displaymath}
\left| \theta - \frac{p}{q} \right| 
\geq \left| \frac{p_{n}}{q_{n}} - \frac{p}{q} \right| 
     - \left| \theta - \frac{p_{n}}{q_{n}} \right| 
\geq \frac{1}{|qq_{n}|} - \frac{l_{0}}{E^{n}|q_{n}|}     
> \frac{1}{2|qq_{n}|},
\end{displaymath}
again using (\ref{eq:approx1}) and the fact that $p_{n}q-q_{n}p$ is a non-zero
algebraic integer and hence of absolute value at least $1$ in such fields.

The choice of $n_{0}$ yields
$$
Q^{n_{0}} \leq \exp \left( \frac{\log (2l_{0}|q|) + \log (E)}{\log(E)} \log (Q) \right)
= (2El_{0}|q|)^{\kappa}.
$$

If $p/q \neq p_{n_{0}}/q_{n_{0}}$, then we have
\begin{displaymath}
\left| \theta - \frac{p}{q} \right| 
> \frac{1}{2|q q_{n_{0}}|} 
\geq \frac{1}{2|q|k_{0}Q^{n_{0}}} 
\geq \frac{1}{2k_{0}(2El_{0})^{\kappa}|q|^{\kappa+1}}.  
\end{displaymath}

If $p/q = p_{n_{0}}/q_{n_{0}}$, then we have 
$p/q \neq p_{n_{0}+1}/q_{n_{0}+1}$ and obtain 
\begin{displaymath}
\left| \theta - \frac{p}{q} \right| 
> \frac{1}{2|q q_{n_{0}+1}|} 
\geq \frac{1}{2|q|k_{0}Q^{n_{0}+1}} 
\geq \frac{1}{2k_{0}Q(2El_{0})^{\kappa}|q|^{\kappa+1}}.  
\end{displaymath}
\end{proof}

\begin{lem}
\label{lem:transform}
Let $\bK$ be either $\bQ$ or an imaginary quadratic field and let $\theta \in \bC$.
Suppose that
\begin{displaymath}
\left| q\theta - p \right| > C|q|^{-\kappa},
\end{displaymath}
for some $C$, $\kappa>0$ and all $p, q \in \cO_{\bK}$, the ring of integers of $\bK$,
with $q \neq 0$.

Let $a_{1}$, $a_{2}$, $a_{3}$, $a_{4} \in \cO_{\bK}$ with $a_{1}a_{4}-a_{2}a_{3} \neq 0$
and put $\theta'=(a_{1}\theta+a_{2})/(a_{3}\theta+a_{4})$. Then
\begin{displaymath}
\left| q \theta' - p \right| > \frac{C}{|a_{4}+a_{3}\theta| \left( |a_{3}|(1+|\theta'|)+|a_{1}| \right)^{\kappa}} |q|^{-\kappa},
\end{displaymath}
for the same $C$, $\kappa>0$ and all $p, q \in \cO_{\bK}$ with $q \neq 0$.
\end{lem}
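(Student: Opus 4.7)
The plan is to transfer any good approximation $p/q$ to $\theta'$ into an approximation to $\theta$ and invoke the hypothesis. Given $p, q \in \cO_\bK$ with $q \neq 0$, I would introduce the auxiliary pair
\[
Q = a_{1} q - a_{3} p, \qquad P = a_{4} p - a_{2} q,
\]
both of which lie in $\cO_\bK$ since $a_{1}, \ldots, a_{4}, p, q$ do. Substituting $\theta' = (a_{1}\theta+a_{2})/(a_{3}\theta+a_{4})$ into $q\theta' - p$ and clearing denominators yields the central identity
\[
(a_{3}\theta + a_{4})(q\theta' - p) = Q\theta - P,
\]
on which the rest of the argument hinges.

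In the generic case $Q \neq 0$, applying the hypothesis to the pair $(P,Q)$ gives $|Q\theta - P| > C|Q|^{-\kappa}$, hence $|q\theta'-p| > C/(|a_{3}\theta+a_{4}|\cdot|Q|^{\kappa})$. To put this in the claimed form I would bound $|Q|$ from above by $|q|M$, where $M = |a_{3}|(1+|\theta'|)+|a_{1}|$, via the rewriting
\[
Q = q(a_{1} - a_{3}\theta') + a_{3}(q\theta' - p)
\]
and the triangle inequality, which yields $|Q|\leq |q|(|a_{1}|+|a_{3}||\theta'|) + |a_{3}|\,|q\theta'-p|$. Provided $|q\theta'-p|\leq |q|$, this collapses to $|Q|\leq |q|M$ and direct substitution completes the argument. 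If instead $|q\theta'-p|>|q|\geq 1$, the conclusion is trivial, since the claimed right-hand side is bounded above by a constant times $|q|^{-\kappa}\leq 1$, which is strictly less than $|q\theta'-p|$.

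The degenerate case $Q=0$ is handled separately: the identity reduces to $|q\theta'-p|=|P|/|a_{3}\theta+a_{4}|$, and the nonvanishing of $a_{1}a_{4}-a_{2}a_{3}$ together with $q\neq 0$ forces $P\neq 0$, so $|P|\geq 1$; combining this with the elementary identity $|a_{3}\theta+a_{4}|\cdot|a_{3}\theta'-a_{1}| = |a_{1}a_{4}-a_{2}a_{3}|\geq 1$ and the trivial bound $|a_{3}\theta'-a_{1}|\leq M$ gives the conclusion. The only slightly delicate point I expect is the case split controlling $|Q|$ in Step~2: a direct triangle-inequality estimate still contains $|q\theta'-p|$ on the right, so one must split on whether $|q\theta'-p|$ exceeds $|q|$ in order for the clean bound $|Q|\leq |q|M$ to become available precisely where it is needed. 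Every other step is a routine manipulation of the identity relating $q\theta'-p$ to $Q\theta-P$.
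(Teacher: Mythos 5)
Your proof follows essentially the same route as the paper's: the same central identity $(a_{3}\theta+a_{4})(q\theta'-p)=Q\theta-P$ with $Q=a_{1}q-a_{3}p$ and $P=a_{4}p-a_{2}q$, the same application of the hypothesis to the pair $(P,Q)$, and the same bound $|Q|\leq |q|\left(|a_{3}|(1+|\theta'|)+|a_{1}|\right)$ obtained after setting aside the case where $|q\theta'-p|$ is large (the paper assumes $|q\theta'-p|<1$ where you assume $|q\theta'-p|\leq|q|$, to the same effect). Your explicit treatment of the degenerate case $Q=0$ is a point of extra care that the paper's own proof silently omits.
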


\begin{rem}
This is an explicit version of the results in Section~8 of \cite{Chud},
as well as an extension to include the imaginary quadratic fields. It can be used
to obtain effective irrationality measures for numbers that can be obtained from
$\theta$ by means of fractional transformations.
\end{rem}

\begin{proof}
We can write
\begin{equation}
\label{eq:theta}
\theta = \frac{-a_{4}\theta'+a_{2}}{a_{3}\theta'-a_{1}}.
\end{equation}

Suppose we have $q\theta'-p=\delta$ for some $\delta$. Using this expression,
we can write $\theta'=(\delta+p)/q$ and substituting this expression for $\theta'$
into (\ref{eq:theta}), we find that
\begin{displaymath}
\theta(a_{3}p-a_{1}q)-(a_{2}q-a_{4}p)=-\delta(a_{4}+a_{3}\theta).
\end{displaymath}

From our hypothesis that 
\begin{displaymath}
\left| Q\theta - P \right| > C|Q|^{-\kappa},
\end{displaymath}
for some $C, \kappa>0$ and all $P, Q \in \cO_{\bK}$ with $Q \neq 0$,
we know that
\begin{displaymath}
\left| \delta (a_{4}+a_{3}\theta) \right| > C |a_{3}p-a_{1}q|^{-\kappa}
\end{displaymath}
or
\begin{displaymath}
\left| \delta \right| > \frac{C}{|a_{4}+a_{3}\theta|} |a_{3}p-a_{1}q|^{-\kappa}.
\end{displaymath}

We can assume that $|\delta|<1$. Therefore, $|p| < |q\theta'|+1$.
\begin{displaymath}
|a_{3}p-a_{1}q| < |a_{3}| (1+|\theta' q|) + |a_{1}q| \leq |a_{3}| (|q|+|\theta' q|) + |a_{1}q| = |q| \left( |a_{3}|(1+|\theta'|)+|a_{1}| \right).
\end{displaymath}

Hence
\begin{displaymath}
|q\theta'-p| > \frac{C}{|a_{4}+a_{3}\theta| \left( |a_{3}|(1+|\theta'|)+|a_{1}| \right)^{\kappa}} |q|^{-\kappa},
\end{displaymath}
completing the proof of our lemma.
\end{proof}

\section{Analytic Bounds}

The following lemma is part of Lemma~2.3 in \cite{CV} with one important change.
In Lemma~2.3 of \cite{CV}, we only allowed non-zero values for $x$. We have
removed this condition here as it is not used, or required, in the proof of
Lemma~2.3 in \cite{CV}.

This is important and fortunate, as $x=0$ was actually used to obtain the
theorems in \cite{CV, LPV, TVW}. Therefore, despite the statements in each
of those papers of a result like that Lemma~2.3 which does exclude $x=0$,
the proofs of the theorems in those papers are still sound.

Note that the condition that  $W(x)$ is not a negative real number or zero is
required here with the current proof as it is used in the proof of Lemma~2.2 of
\cite{CV}, a lemma which is used as part of the proof of Lemma~2.3 of \cite{CV}.

\begin{lem}
\label{lem:rembnd}
Let $r$ be a non-negative integer. If $W(x)$ is not a negative real number or zero, 
\begin{eqnarray}
\label{eq:rembnd}
S_{r}(x) 
& = & \left\{ \alpha \left( (x-\beta_{2}) W(x)^{1/n} - (x-\beta_{1}) \right) \right. \nonumber \\
&   & \left. - \left( \beta_{1} (x-\beta_{2}) W(x)^{1/n} - \beta_{2} (x-\beta_{1}) \right) \right\}
	  X_{n,r}^{*}(U(x), Z(x)) \nonumber \\
&   & - (x-\beta_{2})\left( \alpha - \beta_{1} \right) U(x)^{r} R_{1,n,r}(W(x)),
\end{eqnarray}
where 
\begin{displaymath}
R_{m,n,r}(W(x)) = \frac{\Gamma(r+1+m/n)}{\Gamma(m/n) r!}
	     \int_{1}^{W(x)} \left( (1-t)(t-W(x)) \right)^{r} t^{m/n-r-1} \, dt.
\end{displaymath}
\end{lem}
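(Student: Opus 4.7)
I would begin by substituting the explicit formulas for $P_{r}(x)$ and $Q_{r}(x)$ from Lemma~\ref{lem:thue-simp} into $S_{r}(x) = \alpha Q_{r}(x) - P_{r}(x)$ and collecting to obtain
\[
S_{r}(x) = (x-\beta_{2})(\alpha-\beta_{1})\, X_{n,r}^{*}(Z(x),U(x)) - (x-\beta_{1})(\alpha-\beta_{2})\, X_{n,r}^{*}(U(x),Z(x)).
\]
Adding and subtracting $(\alpha-\beta_{1})(x-\beta_{2})W(x)^{1/n} X_{n,r}^{*}(U(x),Z(x))$ then expresses $S_{r}(x)$ as
\[
(x-\beta_{2})(\alpha-\beta_{1})\bigl[X_{n,r}^{*}(Z(x),U(x)) - W(x)^{1/n} X_{n,r}^{*}(U(x),Z(x))\bigr]
\]
plus an $X_{n,r}^{*}(U(x),Z(x))$-term whose coefficient, after expanding and regrouping around $\alpha$ and around $\beta_{1}(x-\beta_{2})W(x)^{1/n}-\beta_{2}(x-\beta_{1})$, simplifies directly to the braced expression in (\ref{eq:rembnd}).

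The lemma thus reduces to proving
\[
X_{n,r}^{*}(Z(x),U(x)) - W(x)^{1/n}\, X_{n,r}^{*}(U(x),Z(x)) = -U(x)^{r}\, R_{1,n,r}(W(x)).
\]
Using the homogeneity $X_{n,r}^{*}(u,v)=v^{r} X_{n,r}(u/v)$ together with $Z(x)/U(x)=W(x)$, this in turn is equivalent to the scalar Pad\'{e}-type identity
\[
W^{r+1/n}\, X_{n,r}(1/W) - X_{n,r}(W) = R_{1,n,r}(W)
\]
for the hypergeometric polynomials $X_{n,r}(x) = {}_{2}F_{1}(-r,-r-1/n;1-1/n;x)$. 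I would prove it by starting from Euler's integral representation of ${}_{2}F_{1}$, writing the left-hand side as a single contour integral along the segment from $1$ to $W$, and integrating by parts $r$ times to bring out both the kernel $((1-t)(t-W))^{r}t^{1/n-r-1}$ and the Pochhammer factor $\Gamma(r+1+1/n)/(\Gamma(1/n)r!)$. The hypothesis that $W(x)$ is neither zero nor a negative real number is precisely what is required to pin down a single-valued branch of $W(x)^{1/n}$ and to keep the path from $1$ to $W(x)$ clear of the branch cut of $t^{1/n-r-1}$.

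The argument just sketched is essentially Lemma~2.2 of \cite{CV}, and the present lemma is Lemma~2.3 of \cite{CV} with the superfluous assumption $x \neq 0$ discarded: as the paragraph preceding the statement explains, inspection of the earlier proof shows that $x=0$ is never actually used anywhere in the manipulation of $U(x)$, $Z(x)$ and $W(x)$. The main obstacle is therefore the hypergeometric identity above; once it is in hand, everything else is the purely algebraic rearrangement sketched in the first paragraph.
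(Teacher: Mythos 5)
Your proposal is correct and follows essentially the same route as the paper: the paper's proof simply invokes Lemma~2.3 of \cite{CV} for $r\geq 1$ (after matching the $a(x)$, $b(x)$, $c(x)$, $d(x)$ there with constant multiples of $x-\beta_{2}$, $x-\beta_{1}$, $\beta_{1}(x-\beta_{2})$, $\beta_{2}(x-\beta_{1})$) and verifies $r=0$ by hand via $R_{1,n,0}(W)=W^{1/n}-1$, whereas you unpack that citation into the algebraic regrouping plus the remainder identity $W^{r+1/n}X_{n,r}(1/W)-X_{n,r}(W)=R_{1,n,r}(W)$, which is precisely Lemma~2.2 of \cite{CV}. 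Your unified integral argument also absorbs the $r=0$ case automatically, which the paper treats separately only because the cited lemma was stated for positive $r$.
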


\begin{proof}
We proved this result for $r$ positive in \cite{CV}. It is part of Lemma~2.3
there upon noting that $x-\beta_{2}$, $x-\beta_{1}$,
$\beta_{1} \left( x-\beta_{2} \right)$, $\beta_{2} \left( x-\beta_{1} \right)$
are $6/((n^{2}-1)(\beta_{1}-\beta_{2}))$ times the $a(x)$, $b(x)$, $c(x)$
and $d(x)$ there respectively (see the proof of Lemma~\ref{lem:thue-simp} for
details).

As noted above, we have removed the unnecessary condition that $x$ be non-zero.

So it only remains to consider $r=0$.

Since
\begin{displaymath}
R_{m,n,0}(W(x)) = \frac{\Gamma(1+m/n)}{\Gamma(m/n)} 
	     \int_{1}^{W(x)} t^{m/n-1} \, dt = W(x)^{m/n}-1,
\end{displaymath}
we have
\begin{eqnarray*}
&   & \left\{ \alpha \left( (x-\beta_{2}) W(x)^{1/n} - (x-\beta_{1}) \right) 
	     - \left( \beta_{1} (x-\beta_{2}) W(x)^{1/n} - \beta_{2} (x-\beta_{1}) \right) \right\} \\
&   & \times X_{n,0}^{*}(U(x), Z(x))
      - (x-\beta_{2}) \left( \alpha - \beta_{1} \right) U(x)^{0} R_{1,n,0}(W(x)) \\
& = & \alpha \left\{ (x-\beta_{2})-(x-\beta_{1}) \right\}
- \left\{ \beta_{1}(x-\beta_{2}) - \beta_{2}(x-\beta_{1}) \right\} \\
& = & \alpha Q_{0}(x)-P_{0}(x) = S_{0}(x),
\end{eqnarray*}
and hence (\ref{eq:rembnd}) holds for $r=0$ too.
\end{proof}

Recall that Lemma~\ref{lem:beta-exp} states that for any root, $\alpha$,
of $F(x)$, we can find a value of $x$ such that the first quantity on the
right-hand side of the expression for $S_{r}(x)$ is zero. This is very
important for our needs as otherwise this term would actually grow exponentially
with $r$, whereas we require $S_{r}(x)$ to decrease exponentially quickly
to zero with $r$.

We will show next that the $U(x)^{r}R_{m,n,r}(W(x))$ term approaches 0 exponentially
with $r$.

\begin{lem}
\label{lem:r-upperbnd}
Let $m$, $n$ and $r$ be non-negative integers with $0<m<n$ and $(m,n)=1$.

$({\rm a})$ If either $u$ and $z$ are distinct positive real numbers, or
$u$ and $z$ are complex numbers with $|u|=|z| \neq 0$ and $z/u \neq -1$, then
\begin{displaymath}
\left| u^{r} R_{m,n,r}(w) \right|
\leq 2.38 \left| 1- w^{m/n} \right|
\frac{n \Gamma(r+1+m/n)}{m \Gamma(m/n) r!} 
{ \min \left( | \sqrt{u} - \sqrt{z} |, | \sqrt{u} + \sqrt{z} | \right) }^{2r},
\end{displaymath}
where $w=z/u$.

$({\rm b})$ If $u$ and $z$ are complex numbers with $|1-z/u|<1$, then
$$
\left| u^{r} R_{m,n,r}(w) \right|
< |w^{m/n}-1| \frac{n\Gamma(r+1+m/n)}{m\Gamma(m/n) r!}
\left( \frac{|z-u|^{2}}{4(|u|-|z-u|)} \right)^{r},
$$
where $w=z/u$.
\end{lem}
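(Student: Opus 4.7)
The plan is to rewrite the defining integral by extracting one power of $t$:
$$R_{m,n,r}(w) = \frac{\Gamma(r+1+m/n)}{\Gamma(m/n) r!} \int_{1}^{w} \phi(t)^{r}\, t^{m/n-1}\, dt, \quad \phi(t) = \frac{(1-t)(t-w)}{t},$$
so that $\phi$ vanishes at both endpoints $t=1$ and $t=w$. The residual integral $\int_{\gamma} t^{m/n-1}\,dt$, along any contour $\gamma$ from $1$ to $w$ avoiding the origin, equals $(n/m)(w^{m/n}-1)$, which supplies the $|w^{m/n}-1|$ factor in both target bounds. The remaining task is to bound $|\phi(t)|^{r}$ uniformly along a well-chosen $\gamma$ and to multiply through by $|u|^{r}$.

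For part (a), with $|w|=1$ and $w \neq -1$, I would write $w = e^{2i\theta}$ and parametrize $t = e^{i\psi}$ along the shorter arc of the unit circle, $\psi$ running from $0$ to $2\theta$. A direct computation using $1-e^{i\alpha} = -2i\sin(\alpha/2)e^{i\alpha/2}$ yields
$$\phi(e^{i\psi}) = 4 \sin(\psi/2) \sin\!\bigl((\psi-2\theta)/2\bigr)\, e^{i\theta},$$
whose modulus attains its maximum $4\sin^{2}(\theta/2) = |1-\sqrt{w}|^{2}$ at the midpoint $\psi = \theta$, where $\sqrt{w} = e^{i\theta}$. With the branch $\sqrt{w} = \sqrt{z}/\sqrt{u}$, this identifies $|u|^{r}\max|\phi|^{r} = |\sqrt{u}-\sqrt{z}|^{2r}$; the alternative arc (passing through $t=-1$) yields the same estimate with $\sqrt{u}+\sqrt{z}$, hence the minimum in the statement.

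For part (b), with $|w-1|<1$, I would parametrize along the straight segment $t = 1+s(w-1)$, $s \in [0,1]$, which gives
$$\int_{1}^{w} ((1-t)(t-w))^{r}\, t^{m/n-r-1}\, dt = (w-1)^{2r+1} \int_{0}^{1} \left[\frac{s(1-s)}{1+s(w-1)}\right]^{r} (1+s(w-1))^{m/n-1}\, ds.$$
The elementary estimates $s(1-s) \le 1/4$ and $|1+s(w-1)|\ge 1-|w-1|$ bound the bracketed factor uniformly by $1/(4(1-|w-1|))$. After multiplying by $|u|^{r}$ and applying $|u||w-1| = |z-u|$ together with $|u|(1-|w-1|)=|u|-|z-u|$, this becomes the required $\bigl(|z-u|^{2}/(4(|u|-|z-u|))\bigr)^{r}$, while the residual integral $\int_0^1 (1+s(w-1))^{m/n-1}\,ds = (n/m)(w^{m/n}-1)/(w-1)$ supplies $|w^{m/n}-1|$.

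The main technical obstacle in both parts is passing from a naive pointwise bound $\max|\phi|^{r} \int|t^{m/n-1}|\,|dt|$ to an estimate expressed in terms of $|\int t^{m/n-1}\,dt| = (n/m)|w^{m/n}-1|$, since $\int|g|\,|dt| \neq |\int g\,dt|$ for complex $g$. On the unit-circle arc this discrepancy is controlled via the inequality $|\sin x|\ge (2/\pi)|x|$ applied to $x = m\theta/n$ (valid since $m/n < 1$ and $|\theta| < \pi/2$), producing a factor at most $\pi/2$ that is absorbed into the constant $2.38$. On the straight segment in part (b), the monotone variation of the argument of $(1+s(w-1))^{m/n-1}$ allows the comparison to go through with strict inequality.
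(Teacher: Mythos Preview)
Your approach is essentially the paper's own. In part~(a) you omit the real positive subcase, which the paper treats first and which is the easier one (there the pointwise bound $|\phi(t)|\le(1-\sqrt{w})^{2}$ combines directly with $\int_{w}^{1}t^{m/n-1}\,dt=(n/m)(1-w^{m/n})$, no extra constant needed); on the unit circle your parametrisation and the bound $\max|\phi|=|1-\sqrt{w}|^{2}$ agree with the paper, and where you invoke $|\sin x|\ge(2/\pi)|x|$ the paper uses the kindred inequality $|\varphi|\sqrt{1-(m\varphi/n)^{2}/12}<(n/m)|1-w^{m/n}|$ to reach the constant $2.38$. For part~(b) your substitution $t=1+s(w-1)$ and the pointwise bounds $s(1-s)\le 1/4$, $|1+s(w-1)|\ge 1-|w-1|$ are exactly what the paper uses (citing Heuberger).

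There is, however, a genuine gap in your last paragraph. After the pointwise bound on the bracketed factor you are left needing
\[
\int_{0}^{1}\bigl|(1+s(w-1))^{m/n-1}\bigr|\,ds \;\le\; \Bigl|\int_{0}^{1}(1+s(w-1))^{m/n-1}\,ds\Bigr| \;=\; \frac{n}{m}\,\frac{|w^{m/n}-1|}{|w-1|},
\]
and ``monotone variation of the argument'' cannot deliver this: the triangle inequality runs the other way, with equality only when $w$ is real. (A quick check with $m/n=1/3$, $w=1+0.5i$ shows the left side strictly exceeds the right.) The paper writes down the corresponding step without justification, so you have reproduced its argument faithfully, but the step is not established by what you say. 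Note too that the strict inequality asserted in part~(b) fails outright at $r=0$, where both sides equal $|w^{m/n}-1|$; so some restriction to $r\ge 1$, or a different handling of the residual integral, is needed in any event.
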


\begin{proof}
(a) We first consider the case when $u$ and $z$ are distinct positive
real numbers. Using the definition of $R_{m,n,r}(w)$ from Lemma~\ref{lem:rembnd},
we put
\begin{displaymath}
f(t) = \frac{(1-t)(t-w)}{t}.
\end{displaymath}

We find that $({\rm d}/{\rm d}t) f(t)= -\left( t^{2} - w \right) / t^{2}$ and 
that $({\rm d}/{\rm d}t) f(t)=0$ precisely when $t = \pm \sqrt{w}$. Therefore, 
$|f(t)| \leq { \left( 1 - \sqrt{w} \right) }^{2}$ for all $t$ in the closed
interval between $w$ and $1$.

As we saw in the proof of Lemma~\ref{lem:rembnd},
\begin{displaymath}
\int_{1}^{w} t^{m/n-1} \, dt = (n/m) \left( w^{m/n}-1 \right),  
\end{displaymath}
and the lemma follows in this case.

We next consider the case when $u$ and $z$ are complex numbers with $|u|=|z|$.

We proceed similarly to the proof of Lemma~2.5 in \cite{CV}. For $w=e^{i \varphi}$
with $0 < \varphi < \pi$ with $\sqrt{w}=e^{i\varphi/2}$, by Cauchy's theorem, 
\begin{equation}
\label{eq:bigR-integral}
R_{m,n,r}(w) 
= \frac{\Gamma(r+1+m/n)}{\Gamma(m/n)r!}
  \int_{C} { \left( (1-t)(t-w) \right) }^{r} t^{m/n-r-1} dt,   
\end{equation}
where 
\begin{displaymath}
C = \{ t \, | \, t=e^{i \theta}, 0 \leq \theta \leq \varphi \}. 
\end{displaymath}

Put 
\begin{displaymath}
f(t) = \frac{(1-t)(t-w)}{t}  
\hspace{5 mm} \mbox{ and } \hspace{5 mm}
g(t) = t^{m/n-1}. 
\end{displaymath}

Define 
\begin{displaymath}
F(\theta) = { \left| f \left( e^{i \theta} \right) \right| }^{2}, 
\end{displaymath}
so 
\begin{displaymath}
F(\theta) = 4 (1 - \cos \theta)(1 - \cos (\theta - \varphi))  
\hspace{5.0mm} \mbox{ for $0 \leq \theta \leq \varphi$.}  
\end{displaymath}

A simple calculation shows that 
\begin{displaymath}
F'(\theta) = -16 \sin \left( \theta - \frac{\varphi}{2} \right) 
	     \sin \left( \frac{\theta}{2} \right)
	     \sin \left( \frac{\varphi - \theta}{2} \right).   
\end{displaymath}

The only values of $0 \leq \theta \leq \varphi$ with $F'(\theta)=0$ 
are $\theta=0,\varphi/2$ and $\varphi$. It is easy to check that 
\begin{displaymath}
F(\theta) \leq F(\varphi/2) 
= 4 { \left(  1 - \cos \frac{\varphi}{2} \right) }^{2} 
= { \left| 1 - \sqrt{w} \right| }^{4}, 
\end{displaymath}
and hence                                             
\begin{displaymath}
\left| \int_{C} f(t)^{r} g(t) dt \right|
\leq \int_{0}^{\varphi} 
     { \left| f \left( e^{i \theta} \right) \right| }^{r} 
     \left| g \left( e^{i \theta} \right) \right| d \theta 
\leq \varphi { \left| 1 - \sqrt{w} \right| }^{2r}. 
\end{displaymath}

Hence
$$
\left| R_{m,n,r}(w) \right| \leq \varphi \frac{\Gamma(r+1+m/n)}{\Gamma(m/n)r!}
{ \left| 1 - \sqrt{w} \right| }^{2r},
$$
for such $w$.

Note that since $|\varphi| < \pi$, the integrand used in (\ref{eq:bigR-integral})
is continuous over the path of integration.

The same argument can be used to extend this result to all $w$ on the unit circle
using the same definition of the square root.

Notice that $| 1 - \sqrt{w} | \leq | 1 + \sqrt{w} |$ for such $w$, as the real
part of $\sqrt{w}$ is non-negative. Therefore,
$$
\left| \sqrt{u} \left( 1 - \sqrt{w} \right) \right|
\leq \min \left( | \sqrt{u} - \sqrt{z} |, | \sqrt{u} + \sqrt{z} | \right).
$$

Finally, observe that since
$$
|\varphi| \sqrt{1-(m\varphi/n)^{2}/12} < (n/m)\sqrt{2-2\cos(m\varphi/n)}
= (n/m) \left| 1-w^{m/n} \right|,
$$
we have
$$
|\varphi| < \frac{(n/m) \left| 1-w^{m/n} \right|}{\sqrt{1-\pi^{2}/12}}
< 2.38(n/m) \left| 1-w^{m/n} \right|
$$
since $|m\varphi/n| \leq \pi$, so the result holds in this case too.

(b) Following the proof of Lemma~2.4 in \cite{Heu}, we use the change of variables
$t=(1-\lambda)+\lambda w$ to obtain
$$
R_{m,n,r}(w) 
= \frac{\Gamma(r+1+m/n)}{\Gamma(m/n)r!} (w-1)^{2r+1}
  \int_{0}^{1} { \left( \lambda (1-\lambda) \right) }^{r} (1+\lambda(w-1))^{m/n-r-1} d\lambda,
$$

With the estimates $\lambda (1-\lambda) \leq 1/4$ and $|1+\lambda(w-1)| \geq 1-|w-1|$
for $0 \leq \lambda \leq 1$ and $|w-1|<1$, we find that
\begin{eqnarray*}
\left| R_{m,n,r}(w) \right|
& \leq & \frac{\Gamma(r+1+m/n)}{\Gamma(m/n)r!} \left( \frac{|w-1|^{2}}{4(1-|w-1|)} \right)^{r} \\
&      & \times \left| \int_{0}^{1} (w-1) (1+\lambda(w-1))^{m/n-1} d\lambda \right| \\
&  =   & \left| w^{m/n}-1 \right| \frac{n\Gamma(r+1+m/n)}{m\Gamma(m/n)r!} \left( \frac{|w-1|^{2}}{4(1-|w-1|)} \right)^{r},
\end{eqnarray*}
and conclude the proof by substituting $w=z/u$.
\end{proof}

\begin{lem}
\label{lem:polybnd} 
Let $m$, $n$ and $r$ be non-negative integers with $0<m<n$ and $(m,n)=1$.

$({\rm a})$ If either $u$ and $z$ are distinct positive real numbers, or
$u$ and $z$ are complex numbers with $|u|=|z|$, then
\begin{displaymath}
\left| X_{m,n,r}^{*}(z,u) \right|, \left| X_{m,n,r}^{*}(u,z) \right| 
\leq 2 \frac{\Gamma(1-m/n) \, r!}{\Gamma(r+1-m/n)} 
       \left\{ \max \left( \left| \sqrt{u} + \sqrt{z} \right|, \left| \sqrt{u} - \sqrt{z} \right| \right) \right\}^{2r}.
\end{displaymath}

$({\rm b})$ If $u$ and $z$ are complex numbers with $\max \left( |1-z/u|, |1-u/z| \right)<1$, then
\begin{displaymath}
\left| X_{m,n,r}^{*}(z,u) \right|, \left| X_{m,n,r}^{*}(u,z) \right|
\leq 2 \frac{\Gamma(1-m/n) \, r!}{\Gamma(r+1-m/n)} 
     \left\{ 2\left( |u| + |z| \right) \right\}^{r}.
\end{displaymath}
\end{lem}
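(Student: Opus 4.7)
My plan is to handle part~(a) by deriving an integral representation for $X_{m,n,r}(w)$ and bounding it on an appropriately chosen contour, and then to deduce part~(b) from part~(a) using positivity of the coefficients.

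For part~(a), the starting point is a Cauchy-type contour integral representation of $X_{m,n,r}(w)$ derived, for example, from the Rodrigues formula for the associated Jacobi polynomial $P_{r}^{(-m/n,-2r-1)}(1-2w)$, or equivalently from a Mellin--Barnes representation for ${}_{2}F_{1}$, contour-deformed so that the same basic kernel $f(t)=(1-t)(t-w)/t$ from the proof of Lemma~\ref{lem:r-upperbnd} appears in the integrand. I would then deform the contour so as to pick up the \emph{maximum} of $|f(t)|$ rather than its minimum. When $u, z$ are positive reals (so $w=z/u>0$), the loop is routed near $t=-\sqrt{w}$, where $|f(t)|=(1+\sqrt{w})^{2}$ instead of $(1-\sqrt{w})^{2}$. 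When $|u|=|z|$ (so $w=e^{i\varphi}$), I would parameterise the contour as the long arc $\{e^{i\theta}:\varphi\leq\theta\leq 2\pi\}$ in place of the short arc used in Lemma~\ref{lem:r-upperbnd}(a); the same trigonometric computation of $F(\theta)=|f(e^{i\theta})|^{2}$ given there then shows that $F$ is maximised at $\theta=(\varphi+2\pi)/2$ with value $|1+\sqrt{w}|^{4}$. After rescaling by $|u|^{r}$ (respectively $|z|^{r}$) and combining with the Gamma-function prefactor, with the contour-length factor absorbed into the constant~$2$, this produces the claimed bound in terms of $\max(|\sqrt{u}\pm\sqrt{z}|)^{2r}$.

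For part~(b), the key observation is that all coefficients of $X_{m,n,r}(w)$ are positive for $0<m<n$. Writing $X_{m,n,r}^{*}(z,u)=\sum_{k=0}^{r}c_{k}z^{k}u^{r-k}$ with $c_{k}>0$, the triangle inequality gives
\begin{displaymath}
|X_{m,n,r}^{*}(z,u)| \leq \sum_{k=0}^{r} c_{k}|z|^{k}|u|^{r-k} = |u|^{r}\, X_{m,n,r}(|z|/|u|).
\end{displaymath}
Since $|z|/|u|$ is a positive real, I can invoke part~(a) with $\tilde u = 1$ and $\tilde z = |z|/|u|$ to obtain
\begin{displaymath}
X_{m,n,r}(|z|/|u|) \leq 2\,\frac{\Gamma(1-m/n)\,r!}{\Gamma(r+1-m/n)}\,\bigl(1+\sqrt{|z|/|u|}\bigr)^{2r}.
\end{displaymath}
The elementary inequality $(1+\sqrt{s})^{2}\leq 2(1+s)$ for $s\geq 0$ (equivalent to $(\sqrt{s}-1)^{2}\geq 0$) then upgrades this to $\bigl(1+\sqrt{|z|/|u|}\bigr)^{2r} \leq 2^{r}\bigl((|u|+|z|)/|u|\bigr)^{r}$, and multiplying through by $|u|^{r}$ produces the desired bound $(2(|u|+|z|))^{r}$. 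Note that the hypothesis $\max(|1-z/u|,|1-u/z|)<1$ is not actually used in this reduction; it features later when the estimate is combined with the companion $R_{m,n,r}$-bound of Lemma~\ref{lem:r-upperbnd}(b).

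The main obstacle lies entirely in part~(a): writing down explicitly the closed contour that produces the complementary $\max$-bound, and verifying that the constants line up so that the prefactor comes out to exactly $2$. Once part~(a) is established, part~(b) is essentially immediate via the positivity-of-coefficients reduction above.
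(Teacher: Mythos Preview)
Your approach to part~(a) in the unit-circle case is essentially the same as the paper's: both use a contour-integral representation inherited from Lemma~2.6 of \cite{CV}, parametrise over the full circle $0\leq\theta<2\pi$, and locate the maximum of $F(\theta)=4(1-\cos\theta)(1-\cos(\theta+\varphi))$ at $\theta=\pi-\varphi/2$ with value $|1+\sqrt{w}|^{4}$. The paper then obtains a prefactor $4/|1+\sqrt{w}|^{2}$ and bounds it by $2$ using $|1+\sqrt{w}|^{2}\geq 2$ on the unit circle; this is the explicit constant you flagged as the ``main obstacle.'' One caution for the positive-real subcase: routing the contour near $t=-\sqrt{w}$ runs into the branch cut of $t^{m/n-1}$ on the negative real axis, so the deformation is not as straightforward as you suggest. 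The paper sidesteps this entirely by quoting Lemma~5.2 of \cite{Vout1}, which gives directly $|X_{m,n,r}^{*}(z,u)|\leq(\sqrt{u}+\sqrt{z})^{2r}$ for positive $u,z$, and then observes that the $\Gamma$-ratio exceeds~$1$.

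Your argument for part~(b) is genuinely different from the paper's and is cleaner. The paper proceeds by extending Lemma~2.5 of Heuberger \cite{Heu}, which gives a more intricate estimate of the form $2^{r-m/n}(1+|z/u|)^{r+m/n}$ plus a correction term of order $|1-z/u|^{2r+1}/(2\sqrt{3})^{r+1}$, and then checks that this correction is negligible under the hypothesis $|1-z/u|<1$. Your positivity-of-coefficients reduction to part~(a) followed by $(1+\sqrt{s})^{2}\leq 2(1+s)$ is both shorter and, as you correctly observe, does not actually need the hypothesis $\max(|1-z/u|,|1-u/z|)<1$; the bound in~(b) therefore holds for all nonzero complex $u,z$. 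The paper's route, by contrast, genuinely uses that hypothesis to control the Heuberger correction term.
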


\begin{proof}
(a) We first consider the case when $u$ and $z$ are distinct positive
real numbers.

For positive integers $r$, from Lemma~5.2 of \cite{Vout1}, we have
\begin{displaymath}
\left| X_{m,n,r}^{*}(z,u) \right|, \left| X_{m,n,r}^{*}(u,z) \right| 
\leq \left( \sqrt{u} + \sqrt{z} \right)^{2r}.
\end{displaymath}

Since $X_{m,n,0}^{*}(u,z)=1$, this also holds for $r=0$.

Since
\begin{equation}
\label{eq:gamma-lb}
\frac{\Gamma(1-m/n) \, r!}{\Gamma(r+1-m/n)} = \frac{r}{r-m/n} \cdots \frac{1}{1-m/n} > 1,
\end{equation}
the desired upper bound holds.

Now we turn to the case when $u$ and $z$ are complex numbers with $|u|=|z|$.

This is an extension of Lemma~2.6 of \cite{CV} to non-negative $r$ and
to any $u$ and $z$ with $w=z/u=e^{i \varphi}$ where $-\pi < \varphi \leq \pi$.

We proceed similarly here and determine the maximum of the function
\begin{displaymath}
F(\theta) = { \left| f \left( e^{i \theta} \right) \right| }^{2} 
	  = 4 (1 - \cos (\theta))(1 - \cos (\theta + \varphi))  
\end{displaymath}
defined there for $0 \leq \theta < 2 \pi$ and fixed $-\pi < \varphi \leq \pi$,
again observing that since $r$ is a positive integer, $f$ is continuous.

Since
\begin{displaymath}
\frac{{\rm d}}{{\rm d}\theta} F(\theta) = -16 \sin \left( \theta + \frac{\varphi}{2} \right) 
	     \sin \left( \frac{\theta}{2} \right)  
	     \sin \left( \frac{- \varphi - \theta}{2} \right),  
\end{displaymath}
the only values of $\theta$ for which $({\rm d}/{\rm d}\theta) F(\theta)=0$
are $\theta = 0, 2\pi$ (so that $\sin (\theta/2)=0$),
$-\varphi/2, \pi - \varphi/2, 2\pi - \varphi/2$ 
(so that $\sin (\theta+\varphi/2)=0$) and $-\varphi, 2\pi - \varphi$
(so that $\sin (-(\theta+\varphi)/2)=0$).

\noindent
$\bullet$ $\theta=0, 2\pi$: $F(\theta)=0$\\
$\bullet$ $\theta=-\varphi/2$: $F(\theta)=4 (1 - \cos (-\varphi/2))(1 - \cos (\varphi/2))=4 (1 - \cos (\varphi/2))^{2}$\\
$\bullet$ $\theta=\pi-\varphi/2$: $F(\theta)=4 (1 - \cos (\pi-\varphi/2))(1 - \cos (\pi+\varphi/2))=4 (1 + \cos (\varphi/2))^{2}$\\
$\bullet$ $\theta=2\pi-\varphi/2$: $F(\theta)=4 (1 - \cos (2\pi-\varphi/2))(1 - \cos (2\pi+\varphi/2))=4 (1 - \cos (\varphi/2))^{2}$\\
$\bullet$ $\theta=-\varphi$: $F(\theta)=4 (1 - \cos (-\varphi))(1 - \cos (0))=0$\\
$\bullet$ $\theta=2\pi-\varphi$: $F(\theta)=4 (1 - \cos (2\pi-\varphi))(1 - \cos (2\pi))=0$

Since $-\pi/2 < \varphi/2 \leq \pi/2$, $0 \leq \cos (\varphi/2) \leq 1$ and hence
the maximum value of $F(\theta)$ is $4 (1 + \cos (\varphi/2))^{2}$. We can write
$|1 + \sqrt{w}|^{2}=(1 + \cos(\varphi/2))^{2} +\sin^{2}(\varphi/2)
=1 + 2\cos(\varphi/2)+\cos^{2}(\varphi/2)+\sin^{2}(\varphi/2)=2 + 2\cos(\varphi/2)$.

Hence $F(\theta) \leq |1 + \sqrt{w}|^{4}$ and following the same steps
as in the remainder of the proof of Lemma~2.6 of \cite{CV}, we find that
\begin{eqnarray*}
\left| X_{m,n,r}^{*}(z,u) \right|
& \leq & 4 |u|^{r} \frac{\Gamma(1-m/n) \, r!}{\Gamma(r+1-m/n)} 
	     \left| 1 + \sqrt{w} \right|^{2r-2}.
\end{eqnarray*}

Since
$$
|\sqrt{u}| \left| 1 + \sqrt{w} \right|
\leq \max \left( \left| \sqrt{u} + \sqrt{z} \right|, \left| \sqrt{u} - \sqrt{z} \right| \right),
$$
\begin{eqnarray*}
\left| X_{m,n,r}^{*}(z,u) \right|
& \leq & \frac{4}{\left| 1 + \sqrt{w} \right|^{2}} \frac{\Gamma(1-m/n) \, r!}{\Gamma(r+1-m/n)}
\max \left( \left| \sqrt{u} + \sqrt{z} \right|, \left| \sqrt{u} - \sqrt{z} \right| \right)^{2r}.
\end{eqnarray*}

Since $w$ is on the unit circle, we can write $1+\sqrt{w}=1+w_{1} \pm \sqrt{1-w_{1}^{2}}i$,
where $0 \leq w_{1} \leq 1$. Hence, $\left| 1+\sqrt{w} \right|^{2}=2+2w_{1} \geq 2$, and so
$$
\frac{4}{\left| 1 + \sqrt{w} \right|^{2}} \leq 2.
$$

It follows that
\begin{eqnarray*}
\left| X_{m,n,r}^{*}(z,u) \right|
& \leq & 2 \frac{\Gamma(1-m/n) \, r!}{\Gamma(r+1-m/n)} 
\max \left( \left| \sqrt{u} + \sqrt{z} \right|, \left| \sqrt{u} - \sqrt{z} \right| \right)^{2r}.
\end{eqnarray*}

To bound $\left| X_{m,n,r}^{*}(u,z) \right|$ from above, we appeal to the fact that
$_{2}F_{1}(-r,-r-m/n,1-m/n,w^{-1})$ is the complex conjugate of $_{2}F_{1}(-r,-r-m/n,1-m/n,w)$,
as shown at the end of the proof of Lemma~2.6 of \cite{CV}.

Finally, for $r=0$, we have $X_{m,n,r}^{*}(z,u)=X_{m,n,r}^{*}(u,z)=1$. From (\ref{eq:gamma-lb}),
the desired upper bound holds for $r=0$.

(b) We prove the upper bound for $X_{m,n,r}^{*}(z,u)$ here, assuming that
$|1-z/u|<1$. The proof for $X_{m,n,r}^{*}(u,z)$ is identical.

We can readily extend the proof of Lemma~2.5 of \cite{Heu} to any $0<m<n$, so
the desired result holds for positive integers, $r$, since
$$
\frac{4e^{2/n}}{\pi} \frac{1}{(2\sqrt{3})^{r+1}} |1-z/u|^{2r+1}|w|^{m/n}<1,
$$
for $r \geq 1$, $n \geq 2$ and $|1-z/u|<1$
and
$2^{r-m/n}(1+|z/u|)^{r+m/n}<\left\{ 2(1+|z/u|) \right\}^{r}$ for such $u$ and $z$.

The proof for $r=0$ is identical to that in (a).
\end{proof}

\begin{lem}
\label{lem:denom}
Suppose that $d$, $m$, $n$ and $r$ are non-negative integers with $d \geq 1$,
$0 < m < n$ and $(m,n)=1$.

$({\rm a})$ Let $D_{m,n,r}$ and $N_{d,n,r}$ be as in the Introduction. Then
$(D_{m,n,r}/N_{d,n,r})X_{m,n,r}(1-dx) \in \bZ[x]$.

Moreover, writing $d=d_{1}d_{2}d_{3}$ where $d_{1}=\gcd(d,n)$,
$d_{2}=\gcd(d/d_{1},n)$ and $d_{3}=d/(d_{1}d_{2})$, we have
$\left( d_{1}^{r} \prod_{p|d_{2}}p^{v_{p}(r!)} \right) | N_{d,n,r}$.

$({\rm b})$ Define
\begin{displaymath}
\mu_{n} = \prod_{\stackrel{\displaystyle p|n}{p, {\rm prime}}} 
p^{1/(p-1)}.  
\end{displaymath}

Then each of the coefficients of the polynomial 
\begin{displaymath}
{2r \choose r} {} _{2}F_{1}(-r,-r \pm m/n, -2r, n \mu_{n}x) 
\end{displaymath}
is a rational integer times non-negative integer powers of $\mu_{n}$.

For $n \geq 3$, $\mu_{n} < 1.94 \log (n)$ and for $n > 420$, $\mu_{n} < 1.18 \log (n)$.

$({\rm c})$ For $n$ in Tables~$1$ and $2$ and putting either
$(\cC_{n},\cD_{n})=(\cC_{1,n},\cD_{1,n})$ or $(\cC_{n},\cD_{n})=(100,\cD_{2,n})$
in those tables, we have
\begin{equation}
\label{eq:num-denom-bnd}
\max \left( 1, \frac{\Gamma(1-m/n) \, r!}{\Gamma(r+1-m/n)},
\frac{n\Gamma(r+1+m/n)}{m\Gamma(m/n)r!} \right)
\frac{D_{m,n,r}}{N_{d,n,r}} < \cC_{n} \left( \frac{\cD_{n}}{\cN_{d,n}} \right)^{r}.
\end{equation}

$({\rm d})$ If $\cN_{d,n}|n$, then $(\ref{eq:num-denom-bnd})$ holds with
$\cC_{n}=1$ and $\cD_{n} = n \mu_{n}$ for all $n \geq 3$ and $\cC_{n}=1$
and $\cD_{n} < 1.18n \log(n)$ for all $n \geq 3$, $n \neq 6$.
\end{lem}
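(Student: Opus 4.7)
Plan. The four parts of this lemma are handled sequentially, with (a) and (b) carrying the core technical content and (c), (d) being applications. For part (a), the inclusion $(D_{m,n,r}/N_{d,n,r})X_{m,n,r}(1-dx) \in \bZ[x]$ is immediate from the definitions: the coefficients of $D_{m,n,r}X_{m,n,r}(1-dx)$ are rational integers (by composition), and each is divisible by $N_{d,n,r}$. The divisibility $(d_{1}^{r}\prod_{p\mid d_{2}}p^{v_{p}(r!)}) \mid N_{d,n,r}$ is more delicate, and I would approach it via the Kummer-type hypergeometric identity $X_{m,n,r}(1-z) = ((2r)!/r!)\cdot(n^{r}/\prod_{i=1}^{r}(ni-m))\cdot {}_{2}F_{1}(-r,-r-m/n;-2r;z)$. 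Expanding the hypergeometric polynomial in $z=dx$, the coefficient of $x^{j}$ in $X_{m,n,r}(1-dx)$ acquires the closed form $(-1)^{j}n^{r-j}(2r-j)!\prod_{i=0}^{j-1}(nr+m-ni)\,d^{j}/(\prod_{i=1}^{r}(ni-m)\cdot(r-j)!\,j!)$. For $p\mid d_{1}$, one has $v_{p}(n^{r-j}d^{j})\geq r$, while $\gcd(m,n)=1$ forces $p\nmid(ni-m)$ and $p\nmid(nr+m-ni)$, and $(2r-j)!/((r-j)!\,j!)$ is a $p$-integer (being a product of $r$ consecutive integers divided by $j!$ with $j\leq r$); hence $v_{p}$ of the coefficient is at least $r$. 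For $p\mid d_{2}$, use $v_{p}(d)\geq v_{p}(n)+1$ together with Legendre's formula $v_{p}(r!)=\sum_{i\geq 1}\lfloor r/p^{i}\rfloor$ applied to the factorial ratio, extracting the additional $v_{p}(r!)$ factor.

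For part (b), the coefficient claim on ${2r\choose r}\,{}_{2}F_{1}(-r,-r\pm m/n;-2r;n\mu_{n}x)$ is a standard Chudnovsky-type denominator bound for truncated hypergeometrics. For each prime $p\mid n$, I would bound $v_{p}$ of the $k$-th coefficient from below by combining (i) the estimate $v_{p}((1-m/n)_{k}),v_{p}((-2r)_{k}) \geq -k v_{p}(n) - v_{p}(k!)$ for Pochhammer denominators (using $\gcd(m,n)=1$); (ii) Kummer's theorem applied to $\binom{2r}{r}$ to handle carries; and (iii) the $(n\mu_{n})^{k}$ contribution of $k(v_{p}(n)+1/(p-1))$. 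After invoking $v_{p}(k!)\leq k/(p-1)$, these combine to give non-negative $p$-adic valuation, with the remaining $\mu_{n}$-power absorbed into the statement. The bounds on $\mu_{n}$ follow from $\log\mu_{n}=\sum_{p\mid n}(\log p)/(p-1)$ via Mertens-style estimates for large $n$ (yielding $\mu_{n}<1.18\log n$ for $n>420$), with direct computer verification for the residual range $3\leq n\leq 420$.

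For parts (c) and (d), I would combine (a) and (b) with the gamma-ratio bound $\Gamma(1-m/n)r!/\Gamma(r+1-m/n)\leq r^{m/n}$ (and its analogue for the other factor). In part (d), the hypothesis $\cN_{d,n}\mid n$ permits the choice $\cD_{n}=n\mu_{n}$ with $\cC_{n}=1$; the refined bound $\cD_{n}<1.18n\log n$ for $n\geq 3$, $n\neq 6$ then follows from (b)'s asymptotic bound for $n>420$ combined with small-$n$ verification, and the exception $n=6$ arises because $\mu_{6}=\sqrt{6}>1.18\log 6$. Part (c) then specializes to the entries of Tables 1 and 2 by case-by-case verification. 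The main obstacle is the $p$-adic bookkeeping in part (a) for primes $p\mid d_{2}$: distributing $v_{p}$ contributions across $n^{r-j}d^{j}$, the factorials $(2r-j)!, (r-j)!, j!$, and Legendre's formula to recover precisely $v_{p}(r!)$ additional factors requires careful tracking. The Kummer-type identity above is the key reduction that makes the coefficient formula clean enough for this bookkeeping to succeed; without it, the direct binomial expansion of $X_{m,n,r}(1-dx)$ would produce unwieldy nested sums whose $p$-adic valuations are harder to control.
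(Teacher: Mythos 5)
Your part (a) is essentially the paper's own argument: the same Kummer-type rewriting $X_{m,n,r}(1-dx)=\frac{r!\,n^{r}}{(n-m)\cdots(rn-m)}\binom{2r}{r}\,{}_{2}F_{1}(-r,-r-m/n;-2r;dx)$, followed by the observation that the linear factors $kn-m$ are prime to $n$, that $p\mid d_{1}$ gives $v_{p}(n^{r-j}d^{j})\geq r$, and that for $p\mid d_{2}$ the inequality $v_{p}(j!)\leq j/(p-1)\leq j$ lets one extract $p^{v_{p}(r!)}$ from $r!/j!\cdot d^{j}$. Part (b) you handle by a direct $p$-adic computation where the paper cites Lemma~2.4 of \cite{CV}, and your treatment of the bounds on $\mu_{n}$ (analytic estimate for large $n$ plus finite verification) matches the paper's in spirit, though note $\mu_{6}=2\sqrt{3}$, not $\sqrt{6}$.

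The genuine gap is in part (c). You propose to obtain (\ref{eq:num-denom-bnd}) with the tabulated $(\cC_{n},\cD_{n})$ by ``combining (a) and (b) with the gamma-ratio bound'' and then ``case-by-case verification.'' But (a) and (b) only yield the weak bound $D_{m,n,r}\leq n^{r}\mu_{n,r}\leq(n\mu_{n})^{r}$ of part (d); the tabulated values $\cD_{1,n}$ are substantially smaller (e.g.\ $\cD_{1,3}\approx e^{0.93}\approx 2.5$ versus $3\mu_{3}=3\sqrt{3}\approx 5.2$), so no amount of finite checking can close the gap, since the inequality must hold for \emph{all} $r$ and both sides grow exponentially. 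The paper's proof of (c) is the analytic core of the lemma: it splits $D_{m,n,r}$ into contributions from primes below and above $\sqrt{nr}$, bounds the small-prime part by $\exp(3.1\sqrt{nr})$ via Rosser--Schoenfeld, bounds the large-prime part by $\cD_{n,\min}^{r}$ for $r>r_{\rm comp}$ using Lemma~3.3 of \cite{Vout1} together with the Ramar\'{e}--Rumely estimates for $\theta(x;n,k)$ in arithmetic progressions, and only then finishes with explicit computation for $r\leq r_{\rm comp}$. None of this machinery appears in your plan. Relatedly, your claimed bound $\Gamma(1-m/n)r!/\Gamma(r+1-m/n)\leq r^{m/n}$ is false as stated (at $r=1$ the left side is $n/(n-m)$), and for part (d) the constant $\cC_{n}=1$ requires the exact identity $\frac{\Gamma(1-m/n)r!}{\Gamma(r+1-m/n)}\cdot\frac{(n-m)\cdots(rn-m)}{r!}=n^{r}$ applied to Chudnovsky's integrality statement, not a gamma estimate carrying a spurious $r^{m/n}$ factor.
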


\begin{rem}
In practice, for a particular value of $n$ one should use the results in
Tables~1 and 2 or, for other values of $n$, calculate $n \mu_{n}$ explicitly.
However, the values of $\cC_{n}$ and $\cD_{n}$ in part~(d) will be useful in
obtaining results for arbitrary $n$.
\end{rem}

\begin{rem}
As Wakabayashi states in Remark~3.1 in \cite{Waka2}, it can sometimes beneficial
to have a smaller value of $\cC_{n}$ even at the expense of a somewhat larger
$\cD_{n}$. This is the reason for providing $\cD_{2,n}$ in Tables~1 and 2. For
a given $n$, it is the smallest value of $\cD_{n} \geq \cD_{1,n}$ for which we
can take $\cC_{n} < 100$.
\end{rem}

\begin{rem}
It appears that $n \mu_{n}$ is approximately $\pi/e^{\gamma}$
times the best possible value for $\cD_{n}$. That is
$$
n \mu_{n} \approx \frac{\pi}{e^{\gamma}} \exp \left( \frac{\pi}{\phi(n)} \sum_{j=1,(j,n)=1}^{n/2} \cot \frac{\pi j}{n} \right).
$$
\end{rem}

\begin{rem}
To check the calculations used as part of the proof of part~(c),
we checked the results for all $n$ considered there and all $r \leq 400$ against
calculations done in Maple~8. No differences were found. As well as providing
a test for the correctness of the code used, this also provides further evidence
that Proposition~3.2 of \cite{Vout1} yields exact information on the prime
decomposition of $D_{m,n,r}$.
\end{rem}

\begin{table}[ht]
\begin{center}
\begin{tabular}{||cccc||cccc||} \hline 
$n$    & $\cC_{1,n}$         & $\log \cD_{1,n}$ & $\log \cD_{2,n}$ & $n$   & $\cC_{1,n}$         & $\log \cD_{1,n}$ & $\log \cD_{2,n}$ \\ \hline
$3$    & $2.0 \cdot 10^{7}$  & $0.93$           & $0.97$           & $41$  & $1.2 \cdot 10^{6}$  & $3.37$           & $3.51$           \\ \hline 
$4$    & $4.9 \cdot 10^{6}$  & $1.60$           & $1.64$           & $42$  & $2300$              & $4.81$           & $4.86$           \\ \hline 
$5$    & $8.8 \cdot 10^{9}$  & $1.37$           & $1.42$           & $43$  & $2.4 \cdot 10^{6}$  & $3.42$           & $3.55$           \\ \hline 
$6$    & $35,000$            & $2.75$           & $2.78$           & $44$  & $48,000$            & $4.27$           & $4.34$           \\ \hline 
$7$    & $3.8 \cdot 10^{11}$ & $1.66$           & $1.75$           & $45$  & $31,000$            & $4.33$           & $4.42$           \\ \hline 
$8$    & $2.5 \cdot 10^{8}$  & $2.26$           & $2.39$           & $46$  & $3800$              & $4.26$           & $4.32$           \\ \hline 
$9$    & $4.1 \cdot 10^{11}$ & $2.19$           & $2.27$           & $47$  & $240,000$           & $3.51$           & $3.59$           \\ \hline 
$10$   & $2.6 \cdot 10^{6}$  & $3.02$           & $3.11$           & $48$  & $2.4 \cdot 10^{6}$  & $4.67$           & $4.77$           \\ \hline 
$11$   & $9.7 \cdot 10^{9}$  & $2.06$           & $2.19$           & $49$  & $6400$              & $3.80$           & $3.86$           \\ \hline 
$12$   & $3.9 \cdot 10^{11}$ & $3.18$           & $3.27$           & $50$  & $540$               & $4.58$           & $4.62$           \\ \hline 
$13$   & $1.9 \cdot 10^{13}$ & $2.21$           & $2.31$           & $51$  & $200,000$           & $4.24$           & $4.35$           \\ \hline 
$14$   & $6.9 \cdot 10^{10}$ & $3.24$           & $3.37$           & $52$  & $210,000$           & $4.42$           & $4.46$           \\ \hline
$15$   & $94,000$            & $3.21$           & $3.31$           & $53$  & $13,000$            & $3.64$           & $3.70$           \\ \hline 
$16$   & $3400$              & $2.99$           & $3.09$           & $54$  & $190,000$           & $4.79$           & $4.88$           \\ \hline 
$17$   & $75,000$            & $2.50$           & $2.57$           & $55$  & $1400$              & $4.25$           & $4.34$           \\ \hline 
$18$   & $6.9 \cdot 10^{7}$  & $3.64$           & $3.71$           & $56$  & $2.6 \cdot 10^{6}$  & $4.61$           & $4.71$           \\ \hline 
$19$   & $1.2 \cdot 10^{6}$  & $2.61$           & $2.73$           & $57$  & $52,000$            & $4.35$           & $4.48$           \\ \hline 
$20$   & $14,000$            & $3.60$           & $3.68$           & $58$  & $22,000$            & $4.48$           & $4.56$           \\ \hline 
$21$   & $2.2 \cdot 10^{7}$  & $3.47$           & $3.55$           & $59$  & $1.2 \cdot 10^{7}$  & $3.75$           & $3.96$           \\ \hline 
$22$   & $750,000$           & $3.58$           & $3.66$           & $60$  & $160,000$           & $5.30$           & $5.38$           \\ \hline 
$23$   & $150,000$           & $2.80$           & $2.90$           & $61$  & $14,000$            & $3.79$           & $3.85$           \\ \hline 
$24$   & $140,000$           & $3.93$           & $4.08$           & $62$  & $3500$              & $4.54$           & $4.60$           \\ \hline 
$25$   & $29,000$            & $3.16$           & $3.28$           & $63$  & $14,000$            & $4.61$           & $4.70$           \\ \hline 
$26$   & $6.3 \cdot 10^{6}$  & $3.72$           & $3.83$           & $64$  & $1900$              & $4.44$           & $4.49$           \\ \hline 
$27$   & $840,000$           & $3.38$           & $3.48$           & $65$  & $41,000$            & $4.41$           & $4.51$           \\ \hline 
$28$   & $16,000$            & $3.87$           & $4.00$           & $66$  & $1200$              & $5.22$           & $5.27$           \\ \hline 
$29$   & $27,000$            & $3.02$           & $3.14$           & $67$  & $7400$              & $3.89$           & $3.94$           \\ \hline 
$30$   & $1.4 \cdot 10^{6}$  & $4.53$           & $4.63$           & $68$  & $6800$              & $4.67$           & $4.74$           \\ \hline 
$31$   & $1.3 \cdot 10^{7}$  & $3.09$           & $3.20$           & $69$  & $5100$              & $4.54$           & $4.63$           \\ \hline 
$32$   & $1.1 \cdot 10^{6}$  & $3.70$           & $3.83$           & $70$  & $54,000$            & $5.22$           & $5.31$           \\ \hline 
$33$   & $95,000$            & $3.85$           & $3.94$           & $71$  & $3500$              & $3.95$           & $4.03$           \\ \hline 
$34$   & $4200$              & $3.99$           & $4.06$           & $72$  & $1.2 \cdot 10^{6}$  & $5.09$           & $5.16$           \\ \hline 
$35$   & $890,000$           & $3.85$           & $4.00$           & $74$  & $2200$              & $4.71$           & $4.78$           \\ \hline 
$36$   & $1.8 \cdot 10^{7}$  & $4.36$           & $4.41$           & $75$  & $240,000$           & $4.87$           & $4.99$           \\ \hline 
$37$   & $3200$              & $3.27$           & $3.35$           & $76$  & $1.8 \cdot 10^{10}$ & $4.78$           & $4.86$           \\ \hline 
$38$   & $1100$              & $4.09$           & $4.13$           & $77$  & $11,000$            & $4.55$           & $4.60$           \\ \hline 
$39$   & $40,00$             & $4.00$           & $4.10$           & $78$  & $8.1 \cdot 10^{6}$  & $5.37$           & $5.50$           \\ \hline 
$40$   & $16,000$            & $4.33$           & $4.37$           & $80$  & $39,000$            & $5.07$           & $5.13$           \\ \hline 
\end{tabular}                                                             
\caption{Denominator Bounds: $3 \leq n \leq 80$}
\end{center}
\end{table}

\begin{table}[ht]
\begin{center}
\begin{tabular}{||cccc||cccc||cccc||cccc||} \hline 
$n$   & $\cC_{1,n}$ & $\log \cD_{1,n}$ & $\log \cD_{2,n}$ & $n$   & $\cC_{1,n}$        & $\log \cD_{1,n}$ & $\log \cD_{2,n}$ \\ \hline
$81$  & $170,000$   & $4.57$           & $4.69$           & $132$ & $250$              & $5.99$           & $6.00$           \\ \hline
$82$  & $1000$      & $4.82$           & $4.86$           & $134$ & $50$               & $5.33$           & $5.33$           \\ \hline
$84$  & $35,000$    & $5.58$           & $5.65$           & $138$ & $3400$             & $5.93$           & $5.98$           \\ \hline
$85$  & $1400$      & $4.67$           & $4.70$           & $140$ & $270,000$          & $6.00$           & $6.14$           \\ \hline
$86$  & $2800$      & $4.87$           & $4.95$           & $142$ & $41$               & $5.40$           & $5.40$           \\ \hline
$87$  & $2300$      & $4.77$           & $4.85$           & $143$ & $42$               & $5.17$           & $5.17$           \\ \hline
$88$  & $1700$      & $5.02$           & $5.09$           & $144$ & $1200$             & $5.85$           & $5.90$           \\ \hline
$90$  & $11,000$    & $5.72$           & $5.80$           & $150$ & $2.1 \cdot 10^{6}$ & $6.28$           & $6.41$           \\ \hline
$91$  & $1200$      & $4.71$           & $4.75$           & $154$ & $130$              & $5.95$           & $5.96$           \\ \hline
$92$  & $720$       & $4.97$           & $5.01$           & $156$ & $3400$             & $6.15$           & $6.21$           \\ \hline
$93$  & $1600$      & $4.84$           & $4.90$           & $162$ & $1000$             & $5.99$           & $6.03$           \\ \hline
$94$  & $160$       & $4.96$           & $4.97$           & $163$ & $9.4$              & $4.95$           & $4.95$           \\ \hline
$95$  & $670$       & $4.78$           & $4.83$           & $168$ & $77$               & $6.34$           & $6.34$           \\ \hline
$96$  & $11,000$    & $5.40$           & $5.52$           & $169$ & $8.1$              & $5.16$           & $5.16$           \\ \hline
$98$  & $49,000$    & $5.22$           & $5.35$           & $170$ & $820$              & $6.07$           & $6.10$           \\ \hline
$99$  & $5900$      & $5.03$           & $5.10$           & $174$ & $91$               & $6.18$           & $6.18$           \\ \hline
$100$ & $4300$      & $5.31$           & $5.38$           & $180$ & $89$               & $6.49$           & $6.49$           \\ \hline
$102$ & $240$       & $5.63$           & $5.65$           & $182$ & $54$               & $6.12$           & $6.12$           \\ \hline
$104$ & $600$       & $5.18$           & $5.25$           & $186$ & $1100$             & $6.25$           & $6.29$           \\ \hline
$105$ & $3000$      & $5.55$           & $5.60$           & $190$ & $27$               & $6.19$           & $6.19$           \\ \hline
$106$ & $2400$      & $5.08$           & $5.14$           & $198$ & $7300$             & $6.44$           & $6.48$           \\ \hline
$108$ & $5200$      & $5.53$           & $5.58$           & $210$ & $1900$             & $6.98$           & $7.01$           \\ \hline
$110$ & $2400$      & $5.64$           & $5.70$           & $216$ & $510$              & $6.32$           & $6.34$           \\ \hline
$111$ & $200$       & $5.03$           & $5.05$           & $222$ & $9.5$              & $6.44$           & $6.44$           \\ \hline
$112$ & $1900$      & $5.36$           & $5.40$           & $234$ & $3.0$              & $6.61$           & $6.61$           \\ \hline
$114$ & $1000$      & $5.74$           & $5.78$           & $242$ & $8.7$              & $6.20$           & $6.20$           \\ \hline
$116$ & $360,000$   & $5.21$           & $5.33$           & $243$ & $2.0$              & $5.91$           & $5.91$           \\ \hline
$117$ & $700$       & $5.20$           & $5.23$           & $250$ & $3.5$              & $6.38$           & $6.38$           \\ \hline
$118$ & $7000$      & $5.19$           & $5.24$           & $256$ & $37$               & $6.05$           & $6.05$           \\ \hline
$120$ & $11,000$    & $6.04$           & $6.08$           & $286$ & $4.3$              & $6.60$           & $6.60$           \\ \hline
$121$ & $23$        & $4.76$           & $4.76$           & $326$ & $1.2$              & $6.41$           & $6.41$           \\ \hline
$122$ & $4400$      & $5.23$           & $5.30$           & $338$ & $1.0$              & $6.61$           & $6.61$           \\ \hline
$124$ & $1700$      & $5.28$           & $5.35$           & $360$ & $6.0$              & $7.31$           & $7.31$           \\ \hline
$125$ & $46$        & $4.94$           & $4.94$           & $420$ & $2.3$              & $7.79$           & $7.79$           \\ \hline
$126$ & $22,000$    & $6.01$           & $6.08$           & $432$ & $1.0$              & $7.19$           & $7.19$           \\ \hline
$128$ & $79,000$    & $5.20$           & $5.32$           & $486$ & $1.0$              & $7.36$           & $7.36$           \\ \hline
$130$ & $360$       & $5.80$           & $5.83$           &       &                    &                  &                  \\ \hline 
\end{tabular}                                                             
\caption{Denominator Bounds: $81 \leq n \leq 486$}
\end{center}
\end{table}

\begin{proof}
(a) The first statement, $(D_{m,n,r}/N_{d,n,r})X_{m,n,r}(1-dx) \in \bZ[x]$,
follows immediately from the definitions of these quantities.

The second statement is a more general version of Proposition~5.1 of \cite{Chud}
and we follow his method of proof.

We can write
$$
X_{m,n,r}(1-dx) = \frac{r!n^{r}}{(n-m) \cdots (rn-m)} P_{-m}(dx),
$$
where
\begin{eqnarray*}
P_{-m}(x)
& = & { 2r \choose r} \, _{2}F_{1} (-r,-r-m/n;-2r;x) \\
& = & \sum_{i=0}^{r} \left( \prod_{k=r-i+1}^{r} (kn-m) \right) \frac{1}{i! n^{i}} { 2r-i \choose r} (-x)^{i}.
\end{eqnarray*}
(Notice that this differs from \cite{Chud}. This is due to the fact that
$X_{r}(z)$ and $Y_{r}(z)$ have been incorrectly switched in (4.3), (4.4), (5.2)
and (5.4) of \cite{Chud}).

So
$$
X_{m,n,r}(1-dx)
= \sum_{i=0}^{r} \left( \prod_{k=1}^{r-i} \frac{1}{kn-m} \right) \frac{r! n^{r-i} d_{1}^{i}d_{2}^{i}d_{3}^{i}}{i!} { 2r-i \choose r} (-x)^{i}.
$$

Since $(kn-m,n)=1$ for any integer $k$, it is clear that $d_{1}^{r}$ is a divisor
of the numerator of $X_{m,n,r}(1-dx)$.

Now suppose that $d_{2}>1$ and let $p$ be a prime divisor of $d_{2}$. Then
$p^{i}/p^{v_{p}(i!)}$ is an integer, since $v_{p}(i!) \leq i/(p-1) \leq i$.
Hence we can remove a factor of $p^{v_{p}(r!)}$ from $r!$. Doing so for each
prime divisor of $d_{2}$ completes the proof of part~(a).

(b) The first statement is a slightly stronger and more general version of the
statement of Lemma~2.4 of \cite{CV}, but it is, in fact, what is proved there.
Note that the restriction to $j = \pm 1$ is never used in the proof.

Let $f(x)$ be a positive non-decreasing function for $x \geq 2$ and suppose we
want to show that $\mu_{n} \leq f(n)$. If $n_{1}$ is the largest square-free
divisor of $n$, then $\mu_{n_{1}}=\mu_{n}$. If $\mu_{n_{1}} \leq f(n_{1})$,
then $\mu_{n}=\mu_{n_{1}} \leq f(n_{1}) \leq f(n)$.
So we need only prove $\mu_{n} \leq f(n)$ for square-free $n$.

Furthermore, $g(x)=x^{1/(x-1)}$ is a decreasing function for $x>1$. Therefore,
we can further reduce our consideration to $n=p_{1} \cdots p_{k}$, where $p_{i}$ is the $i$-th prime.

So we can write
\begin{eqnarray*}
\log \mu_{n} & = & \sum_{p \leq p_{k}} \frac{\log p}{p-1}
< \sum_{p \leq p_{k}} \frac{\log p}{p} + \sum_{p} \left( \frac{\log p}{p-1} - \frac{\log p}{p} \right) \\
& < & \sum_{p \leq p_{k}} \frac{\log p}{p} + \sum_{p<P} \frac{\log p}{p(p-1)}
+ \int_{P-1}^{\infty} \frac{\log p}{p-1} dp
- \int_{P-1}^{\infty} \frac{\log p}{p} dp.
\end{eqnarray*}

Following the notation of Section~27.7 of \cite{AbSt} (i.e., letting
$f(x)=-\int_{1}^{x} \log(t)/(t-1) dt$),
$$
\int_{P-1}^{\infty} \frac{\log p}{p-1} dp
- \int_{P-1}^{\infty} \frac{\log p}{p} dp
= \frac{\log^{2}(P-1)}{2} + f(P-1)
- \lim_{z \rightarrow \infty} \left( f(z) + \frac{\log^{2}(z)}{2} \right).
$$

Using the functional relationship $f(x)+f(1/x)=-\log^{2}(x)/2$
(see (27.7.5) in \cite{AbSt}) with $x=1/z$, we see that
$$
\lim_{z \rightarrow \infty} f(z) + \frac{\log^{2}(z)}{2}
= -f(0) = -\frac{\pi^{2}}{6}.
$$

Therefore,
$$
\log \mu_{n} < \sum_{p \leq p_{k}} \frac{\log p}{p} + \sum_{p<P} \frac{\log p}{p(p-1)}
+ \frac{\pi^{2}}{6} + \frac{\log^{2}(P-1)}{2} - \int_{1}^{P-1} \frac{\log(t)}{t-1} dt,
$$
for any prime, $P \geq 3$. With $P=107$, we find that
$$
\log \mu_{n} < \sum_{p \leq p_{k}} \frac{\log p}{p}+0.8.
$$

For $p_{k} \geq 32$, by the Corollary to Theorem~6 of \cite{RS},
\begin{eqnarray*}
\log \mu_{n} & < & \sum_{p \leq p_{k}} \frac{\log p}{p} + 0.8 \\
& < & \log(p_{k}) - 1.33258 + 1/\log(p_{k}) +0.8 < \log (p_{k})-0.244.
\end{eqnarray*}

Recalling that $n=p_{1} \cdots p_{k}$, we have $\log(n)=\theta(p_{k})$, where
$\theta(x)$ is the logarithm of the product of all primes $\leq x$. Hence, from
Theorem~10 of \cite{RS}, for $p_{k} \geq 1427$,
$0.95p_{k} < \theta(p_{k}) = \log (n)$ and so $\log(p_{k})< \log \log (n) - \log (0.95)$.
Thus $\log \mu_{n}<\log \log (n)-0.1927$ and the result holds for such $n$.

A computation for $11 \leq p_{k} < 1427$ shows that
$\log \mu_{n}<\log \log (n)+0.162$, or $\mu_{n} < 1.18 \log(n)$, in this range.

By means of another computation, we find that for $3 \leq n < 2310$,
$\mu_{n}<1.18 \log (n)$ holds except for
$n=3, 4, 6, 10, 12, 18, 30, 42, 60, 210$ and $420$ and that
$\mu_{n} < 1.94 \log(n)$ for these $n$, completing the proof of part~(b).

(c) The basis of the proof of this part of the lemma will be Lemma~3.3 from
\cite{Vout1} and we shall proceed as in the proof of Lemma~5.1 there. However,
to determine how much computation will be needed, we must first find a feasible
value for $\cD_{n}$, so we begin with the analytic bounds.

(c-i) {\bf Analytic Estimates}

{\bf Numerator estimates}

We write $d=d_{1}d_{2}d_{3}$ as in part~(a).

If $d_{2}=1$, then $\cN_{d,n}=d_{1}$ is a divisor of $n$ and, from Lemma~3.5(a)
of \cite{Vout1}, $\cN_{d,n}^{r}$ is a divisor of $N_{d,n,r}$. Hence
$\cN_{d,n}^{r}/N_{d,n,r} \leq 1$.

If $d_{2}>1$, then there exists at least one prime, $p$, such that $p$ contributes
$p^{v_{p}(n)+1/(p-1)}$ to $\cN_{d,n}$. From part~(a), we know that
\begin{eqnarray}
\label{eq:numerbnd-1}
\frac{\cN_{d,n}^{r}}{N_{d,n,r}}
& \leq & \frac{\cN_{d,n}^{r}}{d_{1}^{r}\prod_{p|d_{2}} p^{v_{p}(r!)}}
= \frac{\prod_{p \nmid d_{2}} p^{rv_{p}(d)} \prod_{p|d_{2}} p^{rv_{p}(n)+r/(p-1)}}{d_{1}^{r}\prod_{p|d_{2}} p^{v_{p}(r!)}} \nonumber \\
& = & \prod_{p \mid d_{2}} p^{r/(p-1)-v_{p}(r!)}
\leq \prod_{p \mid n} p^{r/(p-1)-v_{p}(r!)},
\end{eqnarray}
the last equality holding since $\prod_{p\nmid d_{2}}p^{v_{p}(d)} \prod_{p\mid d_{2}}p^{v_{p}(n)}=d_{1}$.

Now $0 \leq r/(p-1)-v_{p}(r!) \leq (\log r)/(\log p)+1/(p-1)$. Therefore,
\begin{equation}
\label{eq:numerbnd-2}
\frac{\cN_{d,n}^{r}}{N_{d,n,r}} \leq r^{\omega(n)}\mu_{n},
\end{equation}
where $\omega(n)$ is the number of distinct prime factors of $n$.

{\bf $\Gamma$-term estimates}

Observe that
$$
\frac{\Gamma(1-m/n)r!}{\Gamma(r+1-m/n)}
= \frac{r}{r-m/n} \cdots \frac{1}{1-m/n} > 1,
$$
for $n \geq 2$.

Similarly,
$$
\frac{n \Gamma(r+1+m/n)}{m\Gamma(m/n)r!}
= \frac{r+m/n}{r} \cdots \frac{1+m/n}{1} > 1,
$$
for $n \geq 1$.

Furthermore, since $(x+a)/x$ is a decreasing function of $x$ for fixed positive $a$,
$$
\frac{n \Gamma(r+1+m/n)}{m\Gamma(m/n)r!}
< \frac{\Gamma(1-m/n)r!}{\Gamma(r+1-m/n)},
$$
so
\begin{equation}
\label{eq:maxbnd}
\max \left( 1, \frac{\Gamma(1-m/n) \, r!}{\Gamma(r+1-m/n)},
\frac{n\Gamma(r+1+m/n)}{m\Gamma(m/n)r!} \right)
= \frac{\Gamma(1-m/n)r!}{\Gamma(r+1-m/n)}.
\end{equation}

Notice that $-\log(1-x)=x+x^{2}/2+x^{3}/3+\cdots$ for $|x|<1$.
Furthermore,
\begin{eqnarray*}
& & (x+x^{2})-\left(x+x^{2}/2+x^{3}/3+\cdots \right)
> x^{2} \left( \frac{1}{2} -\frac{x}{3} \left( 1+x+x^{2}+\cdots \right) \right) \\
& = & x^{2} \left( \frac{1}{2}-\frac{x}{3(1-x)} \right)>0,
\end{eqnarray*}
for $0 < x < 3/5$. Therefore, $-\log(1-x)<x+x^{2}$ for $0 < x < 3/5$, and so
\begin{eqnarray*}
\frac{\Gamma(1-m/n)r!}{\Gamma(r+1-m/n)}
&   =  & \prod_{i=1}^{r} \frac{1}{1-m/(in)}
         = \frac{n}{n-m} \exp \left( \sum_{i=2}^{r} - \log \left( 1-\frac{m}{in} \right) \right) \\
&   <  & \frac{n}{n-m} \exp \left( \sum_{i=2}^{r} \left( \frac{m}{in} + \frac{m^{2}}{(in)^{2}} \right) \right) \\
& \leq & \frac{n}{n-m}
         \exp \left( \int_{1}^{r} \left( \frac{m}{nx} + \frac{m^{2}}{(nx)^{2}}\right) dx \right) \\
&  =   & \frac{n}{n-m} \exp \left( \frac{mnr\log(r) - m^{2} + m^{2}r}{n^{2}r} \right) \\
&  <   & n e^{m^{2}/n^{2}}r^{m/n} \leq (en)r^{(n-1)/n}, 
\end{eqnarray*}
for $r \geq 1$, $n \geq 2$ and $n>m$.

Hence 
$$
\max \left( 1, \frac{\Gamma(1-m/n) \, r!}{\Gamma(r+1-m/n)},
\frac{n\Gamma(r+1+m/n)}{m\Gamma(m/n)r!} \right) \frac{\cN_{d,n}^{r}}{N_{d,n,r}}
< (en)r^{\omega(n)+(n-1)/n}\mu_{n}.
$$
for $n \geq 2$.

We saw in part~(b) of this lemma that $\mu_{n} < 1.18 \log(n)$ for $n>420$, it
follows that $e\mu_{n}<3.21\log(n)$ for such $n$. Computing $e\mu_{n}$ for
$3 \leq n \leq 420$, we find that $e\mu_{n}<5.26\log (n)$ for all $n \geq 3$.

From Th\'{e}or\`{e}me~11 of \cite{Robin}, $\omega(n)<1.3842\log(n)/\log \log(n)$
for $n \geq 3$, so for $n \geq 30$, $\omega(n)+(n-1)/n<1.42\log(n)$.
Computing $\omega(n)+(n-1)/n$ for $3 \leq n < 29$, we find that
$\omega(n)+(n-1)/n<1.59\log(n)$ for all $n \geq 3$.

Therefore,
\begin{eqnarray}
\label{eq:fact}
&   & \max \left( 1, \frac{\Gamma(1-m/n) \, r!}{\Gamma(r+1-m/n)},
      \frac{n\Gamma(r+1+m/n)}{m\Gamma(m/n)r!} \right) \frac{\cN_{d,n}^{r}}{N_{d,n,r}} \\
& < & 5.26 r^{1.59\log(n)} n\log (n), \nonumber
\end{eqnarray}
for $n \geq 3$.

We divide the prime divisors of $D_{m,n,r}$ into two sets, according to their size.
We let $D_{m,n,r}^{(S)}$ denote the contribution to $D_{m,n,r}$ from primes at most
$(nr)^{1/2}$ and let $D_{m,n,r}^{(L)}$ denote the contribution from the remaining,
larger, primes. 

{\bf $D_{m,n,r}^{(S)}$ estimates}

From Lemma~3.3(a) of \cite{Vout1}, we know that 
\begin{displaymath}
D_{m,n,r}^{(S)} \leq \prod_{p \leq (nr)^{1/2}}
p^{\lfloor \log(nr)/(\log(p)) \rfloor}. 
\end{displaymath}

Now $\lfloor x \rfloor \leq 2 \lfloor x/2 \rfloor +1$, so
\begin{eqnarray}
\label{eq:dnrs}
D_{m,n,r}^{(S)}
& \leq & \exp \left\{ 2 \psi \left( \sqrt{nr} \right) + \theta \left( \sqrt{nr} \right) \right\} \nonumber \\
&  <   & \exp \left\{ (2.07766 + 1.01624) \sqrt{nr} \right\}
= \exp \left\{ 3.1 \sqrt{nr} \right\},
\end{eqnarray}
from Theorems~9 and 12 of \cite{RS}.

From (\ref{eq:fact}) and (\ref{eq:dnrs}), we know that 
\begin{eqnarray}
\label{eq:small}
& & \max \left( 1, \frac{\Gamma(1-m/n) \, r!}{\Gamma(r+1-m/n)},
\frac{n\Gamma(r+1+m/n)}{m\Gamma(m/n)r!} \right) \frac{\cN_{d,n}^{r}}{N_{d,n,r}}
D_{m,n,r}^{(S)} \nonumber \\
& < & 5.26 r^{1.59\log(n)} n\log (n) \exp \left\{ 3.1 \sqrt{nr} \right\}.
\end{eqnarray}

{\bf $D_{m,n,r}^{(L)}$ estimates}

For each $n$ in Tables~1 and 2, we let $\epsilon_{n}$ denote the analytic bound
obtained from Table~1 of Ramar\'{e} and Rumely \cite{RR} such that
$| \theta(x;n,k) - x/\phi(n)| < \epsilon_{n}x/\phi(n)$ for $x>10^{10}$,
where $\theta(x;n,k)$ is the logarithm of the product of all primes $p \leq x$
with $p \equiv k \bmod n$ and $\phi(n)$ is Euler's phi function.

From Table~2 of \cite{RR}, we can also find $\epsilon_{n}'$ such that
$|\theta(x;n,k)-x/\phi(n)|<\epsilon_{n}'\sqrt{x}$ for $x \leq 10^{10}$.

Combining these two results, we can find
$X_{0}=(\phi(n)\epsilon_{n}'/\epsilon_{n})^{2}<10^{10}$ such that the analytic
bound $| \theta(x;n,k) - x/\phi(n)| < \epsilon_{n}x/\phi(n)$ holds for $x \geq X_{0}$.
We then compute $\theta(x;n,k)$ for all $x \leq X_{0}$ to find the last value
$X_{1}$ that breaches the analytic bounds of Ramar\'{e} and Rumely for $n$.

Put
$$
\cD_{n,N} = \exp \left\{ \frac{n}{\phi(n)} \left( \sum_{A=0}^{N-1} \sum_{l=1,(l,n)=1}^{n/2} \left( \frac{1+\epsilon_{n}}{nA+l}
		      - \frac{1-\epsilon_{n}}{nA+n-l} \right) + \sum_{l=1,(l,n)=1}^{n/2} \frac{1+\epsilon_{n}}{nN+l} \right) \right\}
$$
and compute $\cD_{n,N}$ for $N \geq 1$ to find the value of $N_{\min}$
that minimises it. We use $\cD_{n,\min}$ to denote this minimum value.

From Lemma~3.3(b) of \cite{Vout1}, we see that for any positive integer $N$
satisfying $nr/(nN+n/2) \geq (nr)^{1/2}$, we have 
\begin{eqnarray*}
D_{m,n,r}^{(L)} 
& \leq & \exp \left\{ \sum_{A=0}^{N-1} \sum_{l=1,(l,n)=1}^{n/2} \left( \theta(nr/(nA+l);n,k_{l})  
		      - \theta(nr/(nA+n-l);n,k_{l}) \right) \right\} \\
& & \times \exp \left\{ \sum_{l=1,(l,n)=1}^{n/2} \theta(nr/(nN+l);n, k_{l}) \right\},
\end{eqnarray*}
where $k_{l} \equiv (-m)l^{-1} \bmod n$.

So, for $r>X_{1}(N_{\min}+1/2)=r_{{\rm comp}}$, $D_{m,n,r}^{(L)} \leq \cD_{n,\min}^{r}$.
Combining this inequality with (\ref{eq:small}) yields 
\begin{eqnarray}
\label{eq:dnr}
& & \max \left( 1, \frac{\Gamma(1-m/n) \, r!}{\Gamma(r+1-m/n)},
\frac{n\Gamma(r+1+m/n)}{m\Gamma(m/n)r!} \right) \frac{\cN_{d,n}^{r}}{N_{d,n,r}}
D_{m,n,r} \nonumber \\
& < & 5.26 r^{1.59\log(n)} n\log (n) \exp \left\{ 3.1 \sqrt{nr} + r \log (\cD_{n,\min}) \right\},
\end{eqnarray}
for $r>r_{{\rm comp}}$.

Therefore we can choose $\cD_{n}$ to be any real number greater than or equal to
$$
\exp \left\{ \frac{\log \left( 5.26 r_{{\rm comp}}^{1.59\log(n)} n\log (n) \right)}
{r_{{\rm comp}}}
+ 3.1 \sqrt{\frac{n}{r_{{\rm comp}}}} \right\}
\cD_{n,\min}.
$$

Then
$$
\max \left( 1, \frac{\Gamma(1-m/n) \, r!}{\Gamma(r+1-m/n)},
\frac{n\Gamma(r+1+m/n)}{m\Gamma(m/n)r!} \right) \frac{\cN_{d,n}^{r}}{N_{d,n,r}}
D_{m,n,r} < \cD_{n}^{r} \leq \cC_{n} \cD_{n}^{r}
$$
for all $r \geq r_{{\rm comp}}$, provided $\cC_{n} \geq 1$. Note that as
$\cD_{n}$ is taken closer to the minimum possible value above, the associated
value of $\cC_{n}$ increases. We will try to strike a balance between the sizes
of these two quantities. Therefore, we will often take $\cD_{n}$ slightly larger
than its minimum possible value here.

We now know $\cD_{n}$ as well as how much computation is required to establish our desired
inequalities for all $r \geq 0$ (a computation which will yield $\cC_{n}$), so we are
ready to describe the required computations.

(c-ii) {\bf Direct Calculations}

First, for each $1 \leq r \leq 1000$, we directly calculate
\begin{displaymath}
\max \left( 1, \frac{\Gamma(1-m/n) \, r!}{\Gamma(r+1-m/n)},
\frac{n\Gamma(r+1+m/n)}{m\Gamma(m/n)r!} \right)
D_{m,n,r},
\end{displaymath}
along with the product over all prime divisors of $n$ of the maximum of $1$ and
$\cN_{p^{v_{p}(n)+1},n}^{r}/N_{p^{v_{p}(n)+1},n,r}$.

(c-iii) {\bf Calculated Estimates}

For $1000 < r \leq r_{{\rm comp}}$, we take the following steps.

(1) Computation of the $\Gamma$ terms in the max term.

(2) Estimation of the numerator.
We calculate the product over all prime divisors of $n$ of $p^{r/(p-1)-v_{p}(r!)}$.

This provides an upper bound for $\cN_{d,n}^{r}/N_{d,n,r}$ over all possible
values of $d$.

This is much faster than calculating the maximum possible value of
$\cN_{d,n}^{r}/N_{d,n,r}$ precisely over all values of $d$. However, if, for a
particular value of $r$, after the denominator steps that follow, this estimate
leads to a large value of $\cC_{n}$, then we do calculate the maximum possible
value of $\cN_{d,n}^{r}/N_{d,n,r}$ precisely.

(3) The computation of the contribution to $D_{m,n,r}$ from the small primes, that
is primes, $p$, satisfying $p \leq (nr)^{1/2}$, using Proposition~3.2 of \cite{Vout1}.

We speed up this part of the calculation, and the following parts, by calculating
and storing the first million primes (the last one being $32,441,957$) and their
logarithms before we start the calculations for any of the $r$'s.

(4) The computation of the contribution to $D_{m,n,r}$ from all primes greater
than $\sqrt{nr}$ and at most $(nr-1)/(nA(r)+1)$ for some non-negative integer
$A(r)$, which depends only on $r$. We use Lemma~3.3(b) of \cite{Vout1} as well
as the cached primes and their logarithms here.

(5) The computation of the contribution to $D_{m,n,r}$ from the remaining larger
primes using the same technique as in \cite{Vout1} of using Lemma~3.3(b) there
and calculating the contributions from each interval and congruence class via the
endpoints of these intervals. The only difference is that here we grew $A(r)$
dynamically over the course of the calculation.

In this manner, we proceeded to estimate the size of the required quantities for 
all $r \leq r_{{\rm comp}}$ to complete the proof of part~(c) of the lemma.

All these calculations were performed using code written in the Java
programming language (JDK 1.5.0.11). The code is available upon request.

(d) Following Chudnovsky \cite{Chud} and defining $\mu_{n,r}=\prod_{p|n} p^{\lfloor r/(p-1) \rfloor}$
(note that we use a somewhat different notation from Chudnovsky to avoid
confusion with $(\mu_{n})^{r}$), from his Lemma~4.2, we know that
$$
\frac{(n-m) \cdots (rn-m)}{r!} \mu_{n,r}
$$
is an integer and that
$\frac{(n-m) \cdots (rn-m)}{r!} \mu_{n,r} X_{m,n,r}(x)$ has integer coefficients.
We will bound this integer from above to obtain our upper bound for $D_{m,n,r}$.

When considering the $\Gamma$ terms in the proof of part~(c), we saw that
$$
\max \left( 1, \frac{\Gamma(1-m/n) \, r!}{\Gamma(r+1-m/n)},
\frac{n\Gamma(r+1+m/n)}{m\Gamma(m/n)r!} \right)
= \frac{\Gamma(1-m/n)r!}{\Gamma(r+1-m/n)}.
$$

Now
$$
\frac{(n-m) \cdots (rn-m)}{r!}=n^{r} \frac{\Gamma(r+1-m/n)}{\Gamma(1-m/n)r!}.
$$

Hence
$$
\max \left( 1, \frac{\Gamma(1-m/n) \, r!}{\Gamma(r+1-m/n)},
\frac{n\Gamma(r+1+m/n)}{m\Gamma(m/n)r!} \right) D_{m,n,r}
\leq n^{r} \mu_{n,r}.
$$

Since $\cN_{d,n}|n$, we saw when considering the numerators in part~(c) that
$\cN_{d,n}^{r}/N_{d,n,r} \leq 1$ and so
\begin{eqnarray*}
\max \left( 1, \frac{\Gamma(1-m/n) \, r!}{\Gamma(r+1-m/n)},
\frac{n\Gamma(r+1+m/n)}{m\Gamma(m/n)r!} \right)
\frac{\cN_{d,n}^{r}D_{m,n,r}}{N_{d,n,r}}
& \leq & n^{r} \mu_{n,r} \leq \left( n \mu_{n} \right)^{r} \\
&  <   & (1.18n \log(n))^{r},
\end{eqnarray*}
for all $n > 420$ from part~(b). In fact, we see that $n \mu_{n} < 1.18n\log(n)$
holds for all $n \geq 3$, except $n=3, 4, 6, 10, 12, 18, 30, 42, 60, 210$ and $420$.
For these excluded values of $n$, we can use the data in Tables~1 and 2 associated
with part~(c), along with some calculation, to show that the desired result holds
and we can take $\cC_{n}=1$ and $\cD_{n}=1.18n \log (n)$ for all $n \geq 3$,
$n \neq 6$.
\end{proof}

\begin{lem}
\label{lem:distinct} 
Let $\beta_{1}, \beta_{2}, P_{r}(x), Q_{r}(x)$ and $F(x)$ 
be defined as in Lemma~$\ref{lem:thue-simp}$ and let $a,b,c$ and 
$d$ be complex numbers satisfying $ad-bc \neq 0$. Define 
\begin{displaymath}
K_{r}(x) = a P_{r}(x) + b Q_{r}(x) 
\mbox{\hspace{5.0mm} and \hspace{5.0mm}} 
L_{r}(x) = c P_{r}(x) + d Q_{r}(x).  
\end{displaymath}

If $\left( x - \beta_{1} \right) \left( x - \beta_{2} \right) F(x) \neq 0$, then
\begin{displaymath}
K_{r+1}(x)L_{r}(x) \neq K_{r}(x)L_{r+1}(x),  
\end{displaymath}
for all $r \geq 0$.
\end{lem}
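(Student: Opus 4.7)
The plan is first to observe that by multilinearity of the $2\times 2$ determinant,
\[
K_{r+1}(x) L_r(x) - K_r(x) L_{r+1}(x) = (ad-bc)\bigl(P_{r+1}(x) Q_r(x) - P_r(x) Q_{r+1}(x)\bigr),
\]
so given $ad-bc \neq 0$, the task reduces to proving the Casorati determinant $W_r(x) := P_{r+1}(x) Q_r(x) - P_r(x) Q_{r+1}(x)$ is non-zero at any $x$ with $(x-\beta_1)(x-\beta_2)F(x) \neq 0$.

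Next, I will exploit the three-term recurrences for $P'_r$ and $Q'_r$ from Lemma~\ref{lem:thue}. Since the recurrence coefficients are identical for the two sequences, substituting them into $W'_r := P'_{r+1} Q'_r - P'_r Q'_{r+1}$ makes the $(r+1/2)Y(x)$ terms cancel, leaving the telescoping identity
\[
W'_r = \frac{(nr+1)F(x)^2}{\lambda\bigl(n(r+1)-1\bigr)}\, W'_{r-1} \qquad (r \geq 1).
\]
Iterating down to $W'_0 = P'_1 Q'_0 - P'_0 Q'_1$ and plugging in the explicit initial data $Q'_0 = 2h/3$, $P'_0 = 2hx/3$, $P'_1 = xQ'_1 - \frac{2(n+1)}{3}G(x)F(x)$ from Lemma~\ref{lem:thue} gives
\[
W'_0 = -\frac{4h(n+1)}{9}\,G(x)F(x) = -\frac{4h(n+1)}{9}(x-\beta_1)(x-\beta_2)\,F(x),
\]
whence
\[
W'_r(x) = c_r\,(x-\beta_1)(x-\beta_2)\, F(x)^{2r+1},
\]
where $c_r = -\frac{4h(n+1)}{9}\prod_{k=1}^{r}\frac{nk+1}{\lambda(n(k+1)-1)}$ is non-zero because $h$, $\lambda$, $n+1$ and each factor $nk+1$, $n(k+1)-1$ are non-zero.

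Translating back to $P_r$, $Q_r$ via the relation $P_r = \frac{6\sqrt{\lambda}^{\,r}}{(n^2-1)(\beta_1-\beta_2)}P'_r$ (and the analogous one for $Q_r$) established in the proof of Lemma~\ref{lem:thue-simp} just multiplies $W'_r$ by a non-zero factor, so the identity $W_r(x) = \tilde c_r(x-\beta_1)(x-\beta_2)F(x)^{2r+1}$ holds with $\tilde c_r \neq 0$. The hypothesis $(x-\beta_1)(x-\beta_2)F(x)\neq 0$ then forces $W_r(x)\neq 0$, which finishes the proof. No step presents a real obstacle; the argument is a straightforward unwinding of the Thue recurrence combined with the initial values of Lemma~\ref{lem:thue}.
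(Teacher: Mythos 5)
Your proof is correct, and all the computations check out: the multilinearity reduction to the Casorati determinant $(ad-bc)\left(P_{r+1}Q_{r}-P_{r}Q_{r+1}\right)$, the cancellation of the $(r+1/2)Y(x)$ terms giving $\lambda(n(r+1)-1)W_{r}' = (nr+1)F(x)^{2}W_{r-1}'$, the evaluation $W_{0}'=-\tfrac{4h(n+1)}{9}G(x)F(x)$ from the initial data, and the non-vanishing of the accumulated constant (using $h=\tfrac{(n^{2}-1)(\beta_{1}-\beta_{2})^{2}}{4}\neq 0$ and $\lambda=h/(n^{2}-1)\neq 0$). The paper, however, does not reprove this at all: its entire proof is a citation of Lemma~2.7 of \cite{CV} together with the remark that $P_{r}(x)$ and $Q_{r}(x)$ are non-zero constant multiples of $P_{r}'(x)$ and $Q_{r}'(x)$. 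So your argument is a genuinely self-contained reconstruction of what the cited lemma contains, which is the standard telescoping-determinant computation for solutions of a common three-term recurrence; the payoff is a closed-form identity $W_{r}(x)=\tilde{c}_{r}(x-\beta_{1})(x-\beta_{2})F(x)^{2r+1}$ that makes the role of the hypothesis $(x-\beta_{1})(x-\beta_{2})F(x)\neq 0$ completely transparent, at the cost of redoing work the paper outsources. One caveat you share with the paper: the recurrences of Lemma~\ref{lem:thue} are stated under the hypothesis $\deg F = n$, whereas Lemma~\ref{lem:thue-simp} allows $\gamma_{1},\gamma_{2}$ for which $F$ could have smaller degree; this degenerate case is glossed over in the paper's own framework, so it is not a defect specific to your argument.
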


\begin{proof}
Lemma~2.7 of \cite{CV} states this with our $P_{r}'(x)$ and $Q_{r}'(x)$
in place of $P_{r}(x)$ and $Q_{r}(x)$. Upon noting that our $P_{r}(x)$ and
$Q_{r}(x)$ are constant multiples of $P_{r}'(x)$ and $Q_{r}'(x)$, the result
here holds.
\end{proof}

\section{Proof of Theorem~\ref{thm:general-hypg}} 

We first determine the quantities defined in the Lemma~\ref{lem:thue-simp}.

We have 
\begin{eqnarray*}
W(x) & = & \frac{Z(x)}{U(x)} = \frac{\gamma_{1}}{-\gamma_{2}}
\left( \frac{x-\beta_{1}}{x-\beta_{2}} \right)^{n}.
\end{eqnarray*}

Notice that
$$
W(x) = 1-\frac{U(x)-Z(x)}{U(x)}
\hspace{5.0mm} \mbox{ and } \hspace{5.0mm}
1/W(x) = 1-\frac{Z(x)-U(x)}{Z(x)}.
$$

\subsection{Construction of approximations} 

We now construct our sequences of approximations to $\cA(x)$.

From Lemmas~\ref{lem:thue-simp} and \ref{lem:rembnd}, for $r \geq 0$, we have
\begin{eqnarray*}
Q_{r}(x) & = & \left( x - \beta_{2} \right) X_{n,r}^{*}(Z(x), U(x))
- \left( x - \beta_{1} \right) X_{n,r}^{*}(U(x), Z(x)),  \\ 
P_{r}(x) & = & \beta_{1} \left( x - \beta_{2} \right) X_{n,r}^{*}(Z(x), U(x))
- \beta_{2} \left( x - \beta_{1} \right) X_{n,r}^{*}(U(x), Z(x))
\hspace{3.0mm} \mbox{ and } \\ 
S_{r}(x) & = & -\left( x - \beta_{2} \right) \left( \cA(x) - \beta_{1} \right) U(x)^{r} R_{n, r}(W(x)).
\end{eqnarray*}

These quantities will form the basis for our approximations.

Recalling the definitions of $g$ and $d$ from the statement of Theorem~\ref{thm:general-hypg},
we put $U_{1}(x)=U(x)/g$ and $Z_{1}(x)=Z(x)/g$ and have
\begin{eqnarray*}
X_{n,r}^{*}(U(x), Z(x)) & = & g^{r} X_{n,r}^{*} \left( U_{1}(x), Z_{1}(x) \right) \\
%& = & (gZ_{1}(x))^{r}X_{n,r} \left( \frac{U_{1}(x)}{Z_{1}(x)} \right) \\
%& = & (gZ_{1}(x))^{r}X_{n,r} \left( 1- \frac{Z_{1}(x)-U_{1}(x)}{Z_{1}(x)} \right) \\
& = & (gZ_{1}(x))^{r}X_{n,r} \left( 1- d\frac{Z_{1}(x)-U_{1}(x)}{dZ_{1}(x)} \right).
\end{eqnarray*}

From Lemma~\ref{lem:denom}(a),
$$
\frac{D_{n,r}}{N_{d,n,r}} X_{n,r} \left( 1- d\frac{Z_{1}(x)-U_{1}(x)}{dZ_{1}(x)} \right)
\in \bZ \left[ \frac{Z_{1}(x)-U_{1}(x)}{dZ_{1}(x)} \right]
$$
and, as a consequence,
$$
Z_{1}(x)^{r} \frac{D_{n,r}}{N_{d,n,r}} X_{n,r} \left( 1- d\frac{Z_{1}(x)-U_{1}(x)}{dZ_{1}(x)} \right)
=\frac{D_{n,r}}{N_{d,n,r}} X_{n,r}^{*}(U_{1}(x), Z_{1}(x))
$$
is an algebraic integer. Hence
\begin{eqnarray*}
\frac{h_{r}D_{n,r}}{g^{r}N_{d,n,r}} X_{n,r}^{*}(U(x), Z(x))
& = & \frac{h_{r} D_{n,r}}{N_{d,n,r}} X_{n,r}^{*}(U_{1}(x), Z_{1}(x)) \\
\mbox{and} 
&   & \\
\frac{h_{r}D_{n,r}}{g^{r}N_{d,n,r}} X_{n,r}^{*}(Z(x), U(x))
& = & \frac{h_{r} D_{n,r}}{N_{d,n,r}} X_{n,r}^{*}(Z_{1}(x), U_{1}(x))
\end{eqnarray*}
are algebraic integers in $\bK(\beta_{1})$ (switching the $U$'s and $Z$'s
in the above argument to prove the latter). Since $x$, $\beta_{1}$ and
$\beta_{2}$ are algebraic integers, it follows that
\begin{equation}
\label{eq:new-prqr-expressions-a}
\frac{h_{r}D_{n,r}}{g^{r}N_{d,n,r}} P_{r}(x), \hspace{3.0mm}
\frac{h_{r}D_{n,r}}{g^{r}N_{d,n,r}} Q_{r}(x) \in \cO_{\bK(\beta_{1})}.
\end{equation}

If $[\bK(\beta_{1}):\bK]=1$, then we let
\begin{equation}
\label{eq:new-prqr-expression-1}
p_{r} = \frac{h_{r}D_{n,r}}{g^{r}N_{d,n,r}} P_{r}(x)
\hspace{3.0mm} \mbox{and} \hspace{3.0mm}
q_{r} = \frac{h_{r}D_{n,r}}{g^{r}N_{d,n,r}} Q_{r}(x).
\end{equation}

If $[\bK(\beta_{1}):\bK]=2$, then by hypothesis, $\beta_{1}$ and $\beta_{2}$
are algebraic conjugates over $\bK$, as are $\gamma_{1}$ and $\gamma_{2}$ and
since $x \in \bK$, $U(x)$ is $-1$ times the algebraic conjugate of $Z(x)$.
Therefore,
$\left( x - \beta_{1} \right) X_{n,r}^{*}(U(x), Z(x))$
is the algebraic conjugate of 
$(-1)^{r}\left( x - \beta_{2} \right) X_{n,r}^{*}(Z(x), U(x))$.
Similarly,
$\beta_{2} \left( x - \beta_{1} \right) X_{n,r}^{*}(U(x), Z(x))$
is the algebraic conjugate of 
$(-1)^{r}\beta_{1} \left( x - \beta_{2} \right) X_{n,r}^{*}(Z(x), U(x))$.

So if $[\bK(\beta_{1}):\bK]=2$ and $r$ is odd, then we let
\begin{equation}
\label{eq:new-prqr-expression-2}
p_{r} = \frac{h_{r}D_{n,r}}{g^{r}N_{d,n,r}} P_{r}(x)
\hspace{3.0mm} \mbox{and} \hspace{3.0mm}
q_{r} = \frac{h_{r}D_{n,r}}{g^{r}N_{d,n,r}} Q_{r}(x).
\end{equation}

If $\bK=\bQ$ and $r$ is even, then
$$
(x-\beta_{1}) \frac{h_{r}D_{n,r}}{g^{r}N_{d,n,r}} X_{n,r}^{*}(U(x), Z(x))
= \frac{a+b\sqrt{t}}{2}
$$
for some choice of rational integers $a$, $b$, $t$ with $t \neq 0$.
Hence
$$
\frac{h_{r}D_{n,r}}{\sqrt{t}g^{r}N_{d,n,r}} Q_{r}(x) = -b \in \bZ.
$$
Similarly,
$$
\frac{h_{r}D_{n,r}}{\sqrt{t}g^{r}N_{d,n,r}} P_{r}(x) \in \bZ.
$$

So if $\bK=\bQ$, $[\bK(\beta_{1}):\bK]=2$ and $r$ is even, then we let
\begin{equation}
\label{eq:new-prqr-expression-3}
p_{r} = \frac{h_{r}D_{n,r}}{\sqrt{t}g^{r}N_{d,n,r}} P_{r}(x)
\hspace{3.0mm} \mbox{and} \hspace{3.0mm}
q_{r} = \frac{h_{r}D_{n,r}}{\sqrt{t}g^{r}N_{d,n,r}} Q_{r}(x).
\end{equation}

If $\bK$ is an imaginary quadratic field and $r$ is even, then
$$
(x-\beta_{1}) \frac{h_{r}D_{n,r}}{g^{r}N_{d,n,r}} X_{n,r}^{*}(U(x), Z(x))
= a+b\sqrt{\tau}
$$
for some $a,b \in \bK$ and where $\tau$ is as in the statement of the Theorem.
Hence
$$
\frac{h_{r}D_{n,r}}{g^{r}N_{d,n,r}} Q_{r}(x) = -2b\sqrt{\tau}
$$
and
$$
\sqrt{\tau}\frac{h_{r}D_{n,r}}{g^{r}N_{d,n,r}} Q_{r}(x) \in \cO_{\bK}.
$$
Similarly,
$$
\sqrt{\tau}\frac{h_{r}D_{n,r}}{g^{r}N_{d,n,r}} P_{r}(x) \in \cO_{\bK}.
$$

So if $\bK$ is an imaginary quadratic field, $[\bK(\beta_{1}):\bK]=2$ and $r$
is even, then we let
\begin{equation}
\label{eq:new-prqr-expression-4}
p_{r} = \sqrt{\tau}\frac{h_{r}D_{n,r}}{g^{r}N_{d,n,r}} P_{r}(x)
\hspace{3.0mm} \mbox{and} \hspace{3.0mm}
q_{r} = \sqrt{\tau}\frac{h_{r}D_{n,r}}{g^{r}N_{d,n,r}} Q_{r}(x).
\end{equation}

These are the numbers we shall use for our approximations. 
We have 
\begin{displaymath}
q_{r} \cA(x) - p_{r} = s_{r}, 
\end{displaymath}
where 
\begin{equation}
\label{eq:new-sr-expression}
s_{r} = t_{r} \frac{h_{r}D_{n,r}}{g^{r}N_{d,n,r}}S_{r}
= -t_{r}\frac{h_{r}D_{n,r}}{N_{d,n,r}}
\left( x - \beta_{2} \right) \left( \cA(x) - \beta_{1} \right) U_{1}(x)^{r} R_{n, r}(W(x)),
\end{equation}
where $t_{r}=1$, $1/\sqrt{t}$ or $\sqrt{\tau}$ depending on values of $p_{r}$
and $q_{r}$ used above and the last equality holds due to the expression for
$\cA(x)$ in the statement of Theorem~\ref{thm:general-hypg} and
Lemma~\ref{lem:rembnd}.

\subsection{Estimates} 

We now want to show that these are ``good'' approximations; we 
do this by estimating $|q_{r}|$ and $|s_{r}|$ from above. 

Since $|t|, |\sqrt{\tau}| \geq 1$, it follows that $|t_{r}| \leq |\sqrt{\tau}|$.
Hence
\begin{eqnarray}
\label{eq:new-qr-bnd}
\left| q_{r} \right|
& \leq & h |\sqrt{\tau}| \frac{D_{n,r}}{N_{d,n,r}} \left\{ \left| \left( x-\beta_{1} \right) X_{n,r}^{*} \left( U_{1}(x), Z_{1}(x) \right) \right| \right. \nonumber \\
&      & \left. + \left| \left( x-\beta_{2} \right) X_{n,r}^{*} \left( Z_{1}(x), U_{1}(x) \right) \right| \right\} \nonumber \\
& \leq & 2h |\sqrt{\tau}| \left( |x-\beta_{1}| + |x-\beta_{2}| \right) \cC_{n} \left( \frac{\cD_{n}}{\cN_{d,n}} \right)^{r} \nonumber \\
& \,   & \times \left\{ \max \left( \left| \sqrt{U_{1}(x)} + \sqrt{Z_{1}(x)} \right|, \left| \sqrt{U_{1}(x)} - \sqrt{Z_{1}(x)} \right| \right) \right\}^{2r},
\end{eqnarray}
from (\ref{eq:new-prqr-expression-1}), (\ref{eq:new-prqr-expression-2}),
(\ref{eq:new-prqr-expression-3}), (\ref{eq:new-prqr-expression-4}), the triangle
inequality, the definitions of $\cC_{n}$, $\cD_{n}$, $h$, $\cN_{d,n}$ and
$Q_{r}(x)$, as well as Lemma~\ref{lem:polybnd}(a).

Furthermore,
\begin{eqnarray}
\label{eq:new-sr-bnd}
\left| s_{r} \right|
&   =  & \left| \frac{t_{r}h_{r}D_{n,r}}{N_{d,n,r}} (x-\beta_{2}) \left( \cA(x)-\beta_{1} \right)  U_{1}(x)^{r} R_{n, r}(W(x)) \right| \nonumber \\
& \leq & 2.4h |\sqrt{\tau}| \left| 1- W(x)^{1/n} \right| |x-\beta_{2}| \left| \cA(x)-\beta_{1} \right| \cC_{n}
         \left( \frac{\cD_{n}}{\cN_{d,n}} \right)^{r} \nonumber \\
&      & \times \left\{ \min \left( \left| \sqrt{U_{1}(x)} + \sqrt{Z_{1}(x)} \right|, \left| \sqrt{U_{1}(x)} - \sqrt{Z_{1}(x)} \right| \right) \right\}^{2r},
\end{eqnarray}
from (\ref{eq:new-sr-expression}), the definition of $h$
as well as Lemma~\ref{lem:r-upperbnd}(a).

Recall that we are only considering $0<W(x)<1$ or $|W(x)|=1$ in Theorem~\ref{thm:general-hypg},
so only part~(a) of Lemmas~\ref{lem:r-upperbnd} and \ref{lem:polybnd} are required here.

We can apply Lemma~\ref{lem:distinct} to see that $p_{r}q_{r+1} \neq p_{r+1}q_{r}$.

From (\ref{eq:new-qr-bnd}) and (\ref{eq:new-sr-bnd}), we can set
\begin{eqnarray*}
k_{0} & = & 2h |\sqrt{\tau}| \left( |x-\beta_{1}| + |x-\beta_{2}| \right) \cC_{n}, \\
l_{0} & = & \max \left( 0.5, 2.4h |\sqrt{\tau}| \left| 1- W(x)^{1/n} \right| \left| x-\beta_{2} \right| \left| \cA(x)-\beta_{1} \right| \cC_{n} \right), \\
E & = & \frac{\cN_{d,n}}{\cD_{n}}
\left\{ \min \left( \left| \sqrt{U_{1}(x)} + \sqrt{Z_{1}(x)} \right|, \left| \sqrt{U_{1}(x)} - \sqrt{Z_{1}(x)} \right| \right) \right\}^{-2} \\
\mbox{and} & & \\
Q & = & \frac{\cD_{n}}{\cN_{d,n}}
\left\{ \max \left( \left| \sqrt{U_{1}(x)} + \sqrt{Z_{1}(x)} \right|, \left| \sqrt{U_{1}(x)} - \sqrt{Z_{1}(x)} \right| \right) \right\}^{2}.
\end{eqnarray*}

Hence we have $\kappa = \log(Q)/\log(E)$ and $c=2k_{0}Q(2l_{0}E)^{\kappa}$ in
Lemma~\ref{lem:approx}.

Since $1/(2l_{0}) \leq 1$, our result follows.

\section{Proof of Theorem~\ref{thm:general-hypg-unitdisk}} 

The proof is identical to the proof of Theorem~\ref{thm:general-hypg} except that
we apply part~(b) of Lemmas~\ref{lem:r-upperbnd} and \ref{lem:polybnd}, rather
than part~(a). With this change, we have
\begin{eqnarray*}
k_{0} & = & 2h |\sqrt{\tau}| \left( |x-\beta_{1}| + |x-\beta_{2}| \right) \cC_{n}, \\
l_{0} & = & \max \left( 0.5, h |\sqrt{\tau}| \left| 1-W(x)^{1/n} \right| \left| x-\beta_{2} \right| \left| \cA(x)-\beta_{1} \right| \cC_{n} \right), \\
E     & = & \frac{\cN_{d,n}}{\cD_{n}}
            \frac{4(|U_{1}(x)|-|Z_{1}(x)-U_{1}(x)|)}{|Z_{1}(x)-U_{1}(x)|^{2}} \\
\mbox{and} & & \\
Q     & = & \frac{\cD_{n}}{\cN_{d,n}}
            2\left( \left| U_{1}(x) \right| + \left| Z_{1}(x) \right| \right).
\end{eqnarray*}

Hence we have $\kappa = \log(Q)/\log(E)$ and $c=2k_{0}Q(2l_{0}E)^{\kappa}$ in
Lemma~\ref{lem:approx}.

Since $1/(2l_{0}) \leq 1$, our result follows.

\section{Proof of Corollary~\ref{cor:cor-1}} 

We first determine the quantities defined in the Lemma~\ref{lem:thue-simp}. 

Put $\beta_{1}=0$, $\beta_{2}=b-a$, $\gamma_{1}=1$, $\gamma_{2}=-(b/a)^{n-1}$
and $x=b$.

We have 
\begin{eqnarray*}
U(x) & = & -\gamma_{2}(x-\beta_{2})^n = \frac{b^{n-1}}{a^{n-1}} \left( x-b+a \right)^{n}, \\
Z(x) & = & \gamma_{1}(x-\beta_{1})^n = x^{n} \mbox{ and } \\
W(x) & = & \frac{Z(x)}{U(x)} = \frac{a^{n-1}}{b^{n-1}} \left( \frac{x}{x-b+a} \right)^{n}.
\end{eqnarray*}

Hence, since $U(b) = ab^{n-1}$, $Z(b) = b^{n}$
and $W(b) = b/a$, using the notation of Lemma~\ref{lem:beta-exp},
\begin{eqnarray*}
\cA(b) & = & \frac{\beta_{1} (b-\beta_{2}) W(b)^{1/n} + \beta_{2} (b-\beta_{1})}
             {(b-\beta_{2}) W(b)^{1/n} + (b-\beta_{1})} \\
& = & \frac{(b-a)b}{-a(b/a)^{1/n} + b}
= -\frac{(b-a)(b/a)^{(n-1)/n}}{1 - (b/a)^{(n-1)/n}}
= \alpha.
\end{eqnarray*}

Since $(a,b)=\cO_{\bK}$, by assumption, we can take $g=b^{n-1}$ and
$h_{r}=h=1$. So we put $U_{1}(x)=a$, $Z_{1}(x)=b$ and $d$ the largest
positive rational integer such that $(a-b)/d$ is an algebraic integer. 

Observe that
$$
\frac{b}{a} \left( \frac{b-a}{\alpha}-1 \right)
=-(b/a)^{1/n},
$$
so if
$$
q_{r}\alpha-p_{r}=s_{r},
$$
then
$$
ap_{r} \left( -(b/a)^{1/n} \right)
-b \left( (b-a)q_{r}-p_{r} \right)
=-b(b-a)\frac{s_{r}}{\alpha}.
$$

We use the $p_{r}$'s and $q_{r}$'s defined in the proof of Theorem~\ref{thm:general-hypg}
(note that they are members of $\cO_{\bK}$). In particular, with the expressions
in this section for the relevant quantities
$$
p_{r} = \frac{h_{r}}{g^{r}} \frac{D_{n,r}}{N_{d,n,r}} P_{r}(b)
= \frac{D_{n,r}}{N_{d,n,r}} (a-b)b X_{n,r}^{*}(a, b).
$$

Note that $p_{r}$ and $b \left( (b-a)q_{r}-p_{r} \right)$ are both divisible
by $b(b-a)$, so we have
$$
\frac{ap_{r}}{b(b-a)} \left( (b/a)^{1/n} \right)
-\left( \frac{p_{r}}{b-a} - q_{r} \right)
=\frac{s_{r}}{\alpha}.
$$

Therefore, by Lemma~\ref{lem:polybnd}(a) along with the definitions of
$\cC_{n}$, $\cD_{n}$ and $\cN_{d,n}$,
%and (\ref{eq:new-sr-bnd}),
we have
\begin{equation}
\label{eq:cor1-qr-bnd}
\left| \frac{ap_{r}}{b(b-a)} \right|
\leq 2|a| \cC_{n} \left( \frac{\cD_{n}}{\cN_{d,n}} \right)^{r}
\left\{ \max \left( \left| \sqrt{a} + \sqrt{b} \right|, \left| \sqrt{a} - \sqrt{b} \right| \right) \right\}^{2r}.
\end{equation}

Similarly, using Lemma~\ref{lem:r-upperbnd}(a),
\begin{eqnarray*}
\left| \frac{s_{r}}{\alpha} \right|
&   =  & \left| \frac{D_{n,r}}{N_{d,n,r}} a U_{1}(b)^{r} R_{n,r}(W(b)) \right| \\
& \leq & 2.38 \left| 1- (b/a)^{1/n} \right| |a| \cC_{n}
\left( \frac{\cD_{n}}{\cN_{d,n}} \right)^{r}
\left\{ \min \left( \left| \sqrt{a} + \sqrt{b} \right|, \left| \sqrt{a} - \sqrt{b} \right| \right) \right\}^{2r}.
\end{eqnarray*}

Next, we require an upper bound for $\left| 1-(b/a)^{1/n} \right|$.

If $b/a \in \bQ$, then $0<b/a<1$ and $0<(a-b)/a<1$.
From the binomial theorem, we have
\begin{eqnarray*}
\left| 1-(b/a)^{1/n} \right|
& = & \left| 1-(1-(a-b)/a)^{1/n} \right| \\
& = & \left| \frac{a-b}{na} \left\{ 1 + \frac{n-1}{2n} \left( \frac{a-b}{a} \right)
 + \frac{(n-1)(2n-1)}{6n^{2}} \left( \frac{a-b}{a} \right)^{2} + \cdots
\right\} \right| \\
& < & \frac{|a-b|}{n|a|} \left\{ 1 + \left( \frac{a-b}{a} \right)
 + \left( \frac{a-b}{a} \right)^{2} + \cdots \right\} \\
& = & \frac{a-b}{n|b|}.
\end{eqnarray*}

If $|b/a|=1$, then we can write $b/a=e^{i\varphi}$ for some $-\pi < \varphi \leq \pi$.
So we have
$$
\left| 1-(b/a)^{1/n} \right| = \sqrt{2-2\cos(\varphi/n)}.
$$

Similarly,
$$
\left| 1-(a/b) \right| = \sqrt{2-2\cos(\varphi)}.
$$

Now $(2/\pi)^{2}\varphi^{2} \leq 2-2\cos(\varphi) \leq \varphi^{2}$ for all
$-\pi < \varphi \leq \pi$. Hence
$$
\left| 1-(b/a)^{1/n} \right| \leq |\varphi/n|
= \pi/(2n) (2/\pi)|\varphi| \leq \pi/(2n) \left| 1-(a/b) \right|.
$$

Therefore,
\begin{equation}
\label{eq:cor1-sr-bnd}
\left| \frac{s_{r}}{\alpha} \right|
\leq 1.25 \left| \frac{a(a-b)}{b} \right| \cC_{n}
     \left( \frac{\cD_{n}}{\cN_{d,n}} \right)^{r}
     \left\{ \min \left( \left| \sqrt{a} + \sqrt{b} \right|, \left| \sqrt{a} - \sqrt{b} \right| \right) \right\}^{2r}.
\end{equation}
since $n \geq 3$.

From (\ref{eq:cor1-qr-bnd}) and (\ref{eq:cor1-sr-bnd}), we can set
\begin{eqnarray*}
k_{0} & = & 2 |a|\cC_{n}, \\
l_{0} & = & 1.25\left| \frac{a(a-b)}{b} \right| \cC_{n}, \\
E & = & \frac{\cN_{d,n}}{\cD_{n}}
\left\{ \min \left( \left| \sqrt{a} + \sqrt{b} \right|, \left| \sqrt{a} - \sqrt{b} \right| \right) \right\}^{-2} \\
\mbox{and} & & \\
Q & = & \frac{\cD_{n}}{\cN_{d,n}}
\left\{ \max \left( \left| \sqrt{a} + \sqrt{b} \right|, \left| \sqrt{a} - \sqrt{b} \right| \right) \right\}^{2}.
\end{eqnarray*}

Hence we have $\kappa = \log(Q)/\log(E)$ and $c=2k_{0}Q(2l_{0}E)^{\kappa}$ in
Lemma~\ref{lem:approx}.

We have $|a/b| \geq 1$ and $|a-b| \geq 1$, since the closest distance between
two algebraic integers in an imaginary quadratic field is $1$. In addition,
$\cC_{n} \geq 1$ (since $D_{n,0}=1$). Therefore, $1/(2l_{0})<1$ and our result follows.

\section{Proof of Corollary~\ref{cor:cor-2}} 

We do not specify a value of $x$ here, so we need only concern ourselves with
determining $d$, $g$, $h_{r}$ and $h$, and hence obtaining expressions for
$E$, $Q$ and $c$.

$\bullet$ $g$

Using the definitions of $g$ and the $g_{i}$'s in Corollary~\ref{cor:cor-2},
we will show that $(U(x)\sqrt{g_{3}/g_{2}}/g_{1})^{2}$ is an algebraic integer
(and in $\bQ(\sqrt{t})$). Since $g_{4} \in \bZ$, it will follow that
$U(x)/g=\sqrt{g_{4}} U(x)\sqrt{g_{3}/g_{2}}/g_{1}$ is also an algebraic integer.

Writing
$$
\frac{U^{2}(x)g_{3}}{g_{1}^{2}g_{2}}
= \frac{g_{3}}{4}
\left\{ \left( \frac{u_{1}}{g_{1}g_{2}} \right)^{2}g_{2}
+ \left( \frac{u_{2}}{g_{1}} \right)^{2} \frac{t}{g_{2}} \right\}
+ \frac{u_{1}}{g_{1}g_{2}} \frac{u_{2}}{g_{1}}\frac{g_{3}}{2} \sqrt{t} \in \bQ \left( \sqrt{t} \right),
$$
we have
$$
\mbox{Trace} \left( \frac{U^{2}(x)g_{3}}{g_{1}^{2}g_{2}} \right)
= \frac{g_{3} \left( u_{1}^{2}+u_{2}^{2}t \right)}{2g_{1}^{2}g_{2}}
\hspace{0.5mm}\mbox{ and }\hspace{1.0mm}
\mbox{Norm} \left( \frac{U^{2}(x)g_{3}}{g_{1}^{2}g_{2}} \right)
= \frac{g_{3}^{2} \left( u_{1}^{2}-u_{2}^{2}t \right)^{2}}{16g_{1}^{4}g_{2}^{2}}.
$$

If both of these quantities are rational integers, then the minimal polynomial
of $U^{2}(x)g_{3}/(g_{1}^{2}g_{2})$ over $\bQ$ will be monic with rational integer
coefficients and hence $U^{2}(x)g_{3}/(g_{1}^{2}g_{2})$ is an algebraic integer.

From the definitions of $g_{1}$ and $g_{2}$, $t/g_{2}$, $u_{1}/(g_{1}g_{2})$
and $u_{2}/g_{1}$ are all rational integers, so $\left( u_{1}^{2}+u_{2}^{2}t \right)
/ \left( g_{1}^{2}g_{2} \right)$ and
$\left( u_{1}^{2}-u_{2}^{2}t \right)^{2} / \left( g_{1}^{4}g_{2}^{2} \right)$
are both rational integers.

If $t \equiv 1 \bmod 4$ and $(u_{1}-u_{2})/g_{1} \equiv 0 \bmod 2$, then
$$
\frac{u_{1}^{2}}{g_{1}^{2}} + \frac{u_{2}^{2}}{g_{1}^{2}}t
=
\left\{
\begin{array}{ll}
0 \bmod 4 & \mbox{ if $u_{1}/g_{1} \equiv u_{2}/g_{1} \equiv 0 \bmod 2$} \\
2 \bmod 4 & \mbox{ if $u_{1}/g_{1} \equiv u_{2}/g_{1} \equiv 1 \bmod 2$},
\end{array}
\right.
$$
while
$$
\frac{u_{1}^{2}}{g_{1}^{2}} + \frac{u_{2}^{2}}{g_{1}^{2}}t
= 0 \bmod 4,
$$
if $t \equiv 3 \bmod 4$ and $(u_{1}-u_{2})/g_{1} \equiv 0 \bmod 2$.

In both cases, $t$ is odd, so $g_{2}$ is also odd and thus
$\mbox{Trace} \left( U^{2}(x)g_{3}/(g_{1}^{2}g_{2}) \right) \in \bZ$.

If neither of these conditions holds (i.e., if we are in the ``otherwise''
case of the definition of $g_{3}$), then $g_{3}=4$ and again
$\mbox{Trace} \left( U^{2}(x)g_{3}/(g_{1}^{2}g_{2}) \right) \in \bZ$.

We proceed in a similar way to show that
$\mbox{Norm} \left( U^{2}(x)g_{3}/(g_{1}^{2}g_{2}) \right) \in \bZ$.

If $t \equiv 1 \bmod 4$ and $(u_{1}-u_{2})/g_{1} \equiv 0 \bmod 2$, then
$$
\frac{u_{1}^{2}}{g_{1}^{2}} - \frac{u_{2}^{2}}{g_{1}^{2}}t
= 0 \bmod 4,
$$

If $t \equiv 3 \bmod 4$ and $(u_{1}-u_{2})/g_{1} \equiv 0 \bmod 2$, then
$$
\frac{u_{1}^{2}}{g_{1}^{2}} - \frac{u_{2}^{2}}{g_{1}^{2}}t
=
\left\{
\begin{array}{ll}
0 \bmod 4 & \mbox{ if $u_{1}/g_{1} \equiv u_{2}/g_{1} \equiv 0 \bmod 2$} \\
2 \bmod 4 & \mbox{ if $u_{1}/g_{1} \equiv u_{2}/g_{1} \equiv 1 \bmod 2$}.
\end{array}
\right.
$$

Since in both cases $t$ is odd, $g_{2}$ is also odd, so
$\mbox{Norm} \left( U^{2}(x)g_{3}/(g_{1}^{2}g_{2}) \right) \in \bZ$.

If neither of these conditions holds, then $g_{3}=4$ and again
$$
\mbox{Norm} \left( U^{2}(x)g_{3}/(g_{1}^{2}g_{2}) \right) \in \bZ.
$$

Since $Z(x)$ is $-1$ times the algebraic conjugate of $U(x)$,
$Z(x)/g$ is an algebraic integer as well.

$\bullet$ $h_{r}$

Since $g^{2} \in \bQ$, we can take $h_{r}=1$ for $r$ even. However, if, for
example, $g_{2}$ is a proper divisor of $t$ or $g_{3}=2$, then $g$
need not be a perfect square. Since $g_{2}g_{3}g_{4}/\core(g_{2}g_{3}g_{4})$
is a perfect square, we can take $h_{r}=\sqrt{\core(g_{2}g_{3}g_{4})}$ for
$r$ odd. From the definition of $g_{4}$,
$g_{4}|\core(g_{2}g_{3})$, so $\core(g_{2}g_{3}g_{4}) \leq \core(g_{2}g_{3})$.
Since $g_{2}|t$ and $g_{3}=1$, $2$ or $4$, $\core(g_{2}g_{3}) \leq 2t$. Hence
$h_{r} \leq \sqrt{2t}$ for $r$ odd.

$\bullet$ $d$

Since $U(x)-Z(x)=u_{1}$, our definition of $d$ is correct.

$\bullet$ $E$ and $Q$

Since $U(x)=(u_{1}+u_{2}\sqrt{t})/2$, we have $Z(x)=(-u_{1}+u_{2}\sqrt{t})/2$.
Hence,
$$
\left( \sqrt{U(x)} \pm \sqrt{Z(x)} \right)^{2}
=U(x)+Z(x) \pm 2\sqrt{U(x)Z(x)}
=u_{2}\sqrt{t} \pm \sqrt{u_{2}^{2}t-u_{1}^{2}},
$$
giving rise to our expressions for $E$ and $Q$.

$\bullet$ $c$

From our determination of $h_{r}$ above, we can let $h=\sqrt{|2t|}$. Since
$\bK=\bQ$ here, we have $\tau=1$ and hence
take $c$ to be
\begin{eqnarray*}
& & 4 \sqrt{|2t|} \left( |x-\beta_{1}| + |x-\beta_{2}| \right) \cC_{n} Q \\
& & \times
\left( \max \left( 1, 5 \sqrt{|2t|} \left| 1- W(x)^{1/n} \right| |x-\beta_{2}| \left| \cA(x)-\beta_{1} \right| \cC_{n}E \right) \right)^{\kappa}.
\end{eqnarray*}

\subsection*{Acknowledgements}

The author thanks Michel Waldschmidt for initially bringing this method to the
author's attention as well as his advice and support over the years and also
thanks Gary Walsh for his encouragement to resume work in this area.

In addition, the author is very grateful to the referee for their very careful
reading of the manuscript. The corrections, clarifications and suggestions they
provided improved this article considerably.

\end{document}